\numberwithin{equation}{section}
\theoremstyle{plain}
\newtheorem{lemma}[subsection]{Lemma}
\newtheorem{theorem}[subsection]{Theorem}
\newtheorem{corollary}[subsection]{Corollary}
\newtheorem{proposition}[subsection]{Proposition}
\theoremstyle{definition}
\newtheorem{construction}[subsection]{Construction}
\newtheorem{definition}[subsection]{Definition}
\newtheorem{example}[subsection]{Example}
\newtheorem{remark}[subsection]{Remark}
\newcommand{\mN}{{\mathbb N}}
\newcommand{\mS}{{\mathbb S}}
\newcommand{\mZ}{{\mathbb Z}}
\newcommand{\cC}{{\mathcal C}}
\newcommand{\cI}{{\mathcal I}}
\newcommand{\cJ}{{\mathcal J}}
\newcommand{\cN}{{\mathcal N}}
\newcommand{\cP}{{\mathcal P}}
\newcommand{\cS}{{\mathcal S}}
\DeclareMathOperator{\id}{id}
\DeclareMathOperator{\Mod}{Mod}
\DeclareMathOperator{\Map}{Map}
\DeclareMathOperator{\Der}{Der}
\DeclareMathOperator{\colim}{colim}
\DeclareMathOperator{\hocolim}{hocolim}
\DeclareMathOperator{\THH}{THH}
\DeclareMathOperator{\TAQ}{TAQ}
\DeclareMathOperator{\concat}{\sqcup}
\DeclareMathOperator{\diag}{diag}
\newcommand{\tensor}{\otimes}
\newcommand{\ot}{\leftarrow}
\newcommand{\iso}{\cong}
\newcommand{\op}{{\mathrm{op}}}
\newcommand{\sm}{\wedge}
\newcommand{\wdg}{\vee}
\newcommand{\wdgfib}{\wdg_{\!f\!}}
\newcommand{\Spsym}{{\mathrm{\textrm{Sp}^{\Sigma}}}}
\newcommand{\bld}[1]{{\mathbf{#1}}}
\DeclareMathOperator{\capitalGL}{GL}
\newcommand{\bof}[1]{b{#1}}
\newcommand{\GLoneI}{\capitalGL^{\cI}_1}
\newcommand{\GLoneIof}[1]{\capitalGL^{\cI}_1\!{#1}}
\newcommand{\GLoneJof}[1]{\capitalGL^{\cJ}_1\!\!{#1}}
\newcommand{\GammaS}{\Gamma^{\op}\!\text{-}\cS}
\newcommand{\pre}{\mathrm{pre}}
\newcommand{\cof}{\mathrm{cof}}
\newcommand{\fib}{\mathrm{fib}}
\newcommand{\gp}{\mathrm{gp}}
\newcommand{\dlog}{\,\mathrm{d}\mathit{log}\,}
\newcommand{\arxivlink}[1]{\href{http://arxiv.org/abs/#1}{\texttt{arXiv:#1}}}
\begin{document} 
\title[Logarithmic structures on topological \texorpdfstring{$K$}{K}-theory spectra]{Logarithmic structures \\ on topological \texorpdfstring{$K$}{K}-theory spectra}

\author{Steffen Sagave} \address{Mathematical
Institute, University of Bonn, Endenicher Allee 60, 53115 Bonn,
Germany} \email{sagave@math.uni-bonn.de}

\date{\today}

\begin{abstract}
  We study a modified version of Rognes' logarithmic structures on
  structured ring spectra. In our setup, we obtain canonical
  logarithmic structures on connective $K$-theory spectra which
  approximate the respective periodic spectra. The inclusion of the
  $p$-complete Adams summand into the $p$-complete connective complex
  $K$-theory spectrum is compatible with these logarithmic
  structures. The vanishing of appropriate logarithmic
  \mbox{topological} Andr\'{e}-Quillen homology groups confirms that
  the inclusion of the Adams \mbox{summand} should be viewed as a
  tamely ramified extension of ring spectra.
\end{abstract}
\subjclass[2010]{Primary 55P43; Secondary 14F10, 55P47} \keywords{Symmetric
  spectra, log structures, E-infinity spaces, group completion,
  topological Andr\'{e}-Quillen homology}
\maketitle

\section{Introduction} 
A \emph{pre-log structure} on a commutative ring $A$ is a commutative
monoid $M$ together with a monoid map $\alpha \colon M \to (A,\cdot)$
into the multiplicative monoid of $A$. It is a \emph{log structure} if
the map $\alpha^{-1}(A^{\times}) \to A^{\times}$ from the submonoid
$\alpha^{-1}(A^{\times}) \subseteq M$ of elements mapping to the units
$A^{\times}$ of $A$ is an isomorphism. The trivial log structure
$A^{\times} \hookrightarrow (A,\cdot)$ is the easiest example.  A
\emph{log ring} is a commutative ring with a log structure.  This
notion is the affine version of the \emph{log schemes} studied in
algebraic geometry. Log schemes are useful because they for example
enlarge the range of smooth and \'{e}tale maps.

We consider a very basic example of interest to us: An integral domain
$A$ may be viewed as a log ring $(A,M)$ with $M = A\setminus 0$. The
localization map to its fraction field $A \to K = A[M^{-1}]$  admits a
factorization
\begin{equation}\label{eq:AtoK-factorization}
(A,A^{\times}) \to (A,M) \to (K,K^{\times})
\end{equation} 
in log rings, and we may view $(A,M)$ as an intermediate localization
of $A$ which is ``milder'' than $K$.  In contrast, $A \to K$ does in
general not factor in a non-trivial way as a map of commutative rings.

We switch to the topological $K$-theory spectra of algebraic
topology. Consider the inclusion $\ell_p \to ku_p$ of the $p$-complete
Adams summand into the $p$-complete connective complex $K$-theory
spectrum $ku_p$. On homotopy groups, it induces a map $\mZ_{p}[v_1]
\to \mZ_{p}[u]$ sending $v_1$ to $u^{p-1}$. Since $p-1$ is invertible
in $\mZ_{p}$, this map of homotopy groups behaves like a tamely
ramified extension if we interpret $u$ and $v_1$ as uniformizers.  As
observed by Hesselholt and explained by Ausoni~\cite[\S
10.4]{Ausoni_THH_ku}, computations of the topological Hochschild
homology $\THH$ of $ku_p$ and $\ell_p$ provide a much deeper reason
for why $\ell_p \to ku_p$ should be viewed as a tamely ramified
extension on the level of structured ring spectra: On certain relative
$\THH$ terms, $\ell_p \to ku_p$ shows the same behavior as tamely
ramified extensions of discrete valuation rings whose $\THH$ is
studied by Hesselholt-Madsen~\cite[\S 2]{Hesselholt-M_local_fields}.
This is also supported by the $\THH$ localization sequences for
$\ell_p$ and $ku_p$ established by
Blumberg-Mandell~\cite{Blumberg-M_loc-sequenceTHH}.

In order to explain this and other phenomena arising in connection
with the $\THH$ and algebraic $K$-theory of structured ring spectra,
Rognes introduced a notion of \emph{log ring
  spectra}~\cite{Rognes_TLS}. This is a homotopical generalization of
the log rings defined above in which ring spectra play the role of
commutative rings. One question that remained open in Rognes' work
was how to extend $\ell_p \to ku_p$ to a \emph{formally log \'{e}tale}
map of log ring spectra, i.e, a map whose \emph{log topological
  Andr\'{e}-Quillen homology} vanishes. In the algebraic setup, the
vanishing of the corresponding module of \emph{log K\"ahler
  differentials} detects tame ramification. So being formally log
\'{e}tale is one reasonable candidate for a definition of a
tamely ramified extension of ring spectra, and $\ell_p \to
ku_p$ should be formally log \'{e}tale with respect to suitable log
structures.

The aim of the present paper is to resolve the above issue by
modifying Rognes' definition to what we call \emph{graded} log ring
spectra. There are canonical graded log structures on connective
$K$-theory spectra like $ku_{p}$ and $\ell_p$ turning them into graded
log ring spectra. Generalizing the algebraic
example~\eqref{eq:AtoK-factorization}, the latter provide intermediate
objects between connective and periodic $K$-theory spectra equipped
with their trivial graded log structures. The localizations of these
intermediate graded log ring spectra are the respective periodic
$K$-theory spectra. Moreover, $\ell_p \to ku_p$ extends to a map of
graded log ring spectra which is \emph{formally graded log \'{e}tale},
that is, a map whose \emph{graded log topological Andr\'{e}-Quillen
  homology} vanishes.

\subsection{Logarithmic ring spectra}
Structured ring spectra provide a homotopical generalization of
commutative rings which allows one to transfer many concepts from algebra
to homotopy theory, including for example algebraic $K$-theory, Galois
theory, and Morita theory. There are several equivalent definitions of
structured ring spectra. In the present paper, we will work with the
category of \emph{commutative symmetric ring spectra}
$\cC\Spsym$~\cite{HSS,MMSS,Schwede_SymSp}. The objects are symmetric
spectra which are commutative monoids with respect to the smash
product of symmetric spectra.

To generalize the notions of pre-log and log structures, we need to
know what the ``underlying multiplicative monoid'' of a commutative
symmetric ring spectrum $A$ is. If we were working in the more
classical framework of $E_{\infty}$ ring spectra, this would be the
underlying multiplicative $E_{\infty}$ space of an $E_{\infty}$
spectrum. When dealing with (strictly) commutative symmetric ring
spectra, it is more useful to model $E_{\infty}$ spaces by
\emph{commutative $\cI$-space monoids}~\cite{Sagave-S_diagram}. By
definition, a commutative $\cI$-space monoid $M$ is a space valued
functor on the category of finite sets and injections $\cI$ together
with appropriate multiplication maps. These multiplications turn $M$
into a commutative monoid with respect to a convolution product on
this functor category. The category of commutative $\cI$-space monoids
$\cC\cS^{\cI}$ admits a model structure making it Quillen equivalent
to the category of $E_{\infty}$ spaces. Moreover, there is a Quillen
adjunction
\begin{equation}
\mS^{\cI}[-] \colon \cC\cS^{\cI} \rightleftarrows \cC\Spsym \colon \Omega^{\cI}
\end{equation}
whose right adjoint models the underlying multiplicative monoid.

An \emph{$\cI$-space pre-log structure} on a commutative symmetric
ring spectrum $A$ is then a commutative $\cI$-space monoid $M$
together with a map $\alpha \colon M \to \Omega^{\cI}(A)$ in
$\cC\cS^{\cI}$~\cite[Definition 7.1]{Rognes_TLS}. The units of $A$ are
the subobject $\GLoneIof(A)$ of $\Omega^{\cI}(A)$ given by the
invertible path components. In analogy with the algebraic definition,
an $\cI$-space pre-log structure $(M,\alpha)$ is an \emph{$\cI$-space
  log structure} if $\alpha^{-1}(\GLoneIof(A)) \to
\GLoneIof(A)$ is a weak equivalence of commutative $\cI$-space monoids.

Although this is an obvious and useful generalization of the algebraic
definition, we would like to emphasize one aspect which is not
optimal: If $i\colon A \to B$ is a map of commutative rings, then the
pullback of
\begin{equation}\label{eq:direct-image-of-trivial-alg} 
  B^{\times} \to (B,\cdot) \ot (A,\cdot) \end{equation}
defines a log structure $i_*(B^{\times})$ on $A$. For example, the log
ring $(A,M)$ in~\eqref{eq:AtoK-factorization} arises from $A\to K$ in
this way.  Let $ku$ be the connective complex $K$-theory
spectrum. Since the Bott class $u$ becomes invertible in $\pi_*(KU)$,
the periodic $KU$ has more units in its homotopy groups, and one may
hope that the map $i\colon ku \to KU$ into the periodic spectrum
induces an interesting $\cI$-space log structure as
in~\eqref{eq:direct-image-of-trivial-alg}. However, the pullback of
\begin{equation}\label{eq:direct-image-of-trivial-I}
\GLoneIof(KU) \to \Omega^{\cI}(KU) \ot \Omega^{\cI}(ku)
\end{equation}
only provides the trivial log structure on $ku$. The problem is that
$\GLoneIof(ku)$ and $\GLoneIof(KU)$ are equivalent: $\GLoneI(KU)$ only
detects the units in $\pi_0(KU)\iso \mZ$ and ignores that the graded
ring $\pi_*(KU)=\mZ[u^{\pm 1}]$ has more units than
$\pi_*(ku)=\mZ[u]$.

\subsection{Graded \texorpdfstring{$E_{\infty}$}{E-infinity} spaces}
To overcome the difficulty outlined in the previous example, it is
desirable to have a notion of the units of a ring spectrum $A$ which
takes all units in the graded ring $\pi_*(A)$ into account. Such
\emph{graded} units have been defined by the author in joint work with
Schlichtkrull~\cite{Sagave-S_diagram}.

The key idea behind the graded units is to replace the category $\cI$
used above by a more elaborate indexing category. The appropriate
choice turns out to be the category $\cJ=\Sigma^{-1}\Sigma$ given by
Quillen's localization construction on the category of finite sets and
bijections $\Sigma$. This $\cJ$ is a symmetric monoidal category whose
classifying space $B\cJ$ has the homotopy type of $QS^0$. The same
constructions as in the case of $\cI$-spaces lead to a model category
of commutative $\cJ$-space monoids $\cC\cS^{\cJ}$. We show
in~\cite{Sagave-S_diagram} that $\cC\cS^{\cJ}$ is Quillen equivalent
to the category of $E_{\infty}$ spaces over $B\cJ$. So commutative
$\cJ$-space monoids correspond to ``commutative monoids over the
underlying additive monoid of the sphere spectrum'', just as
$\mZ$-graded monoids in algebra can be defined as commutative monoids
over the additive monoid of $\mZ$. This is why we think of commutative
$\cJ$-space monoids as graded $E_{\infty}$ spaces.

The reason for why $\cJ$ is useful for studying units is a beautiful
connection to the combinatorics of symmetric spectra~\cite[\S
4.21]{Sagave-S_diagram}. It gives rise to a Quillen adjunction
\begin{equation}\label{eq:OmegaJ-adj-intro}
\mS^{\cJ}[-] \colon \cC\cS^{\cJ} \rightleftarrows \cC\Spsym \colon \Omega^{\cJ}.
\end{equation}
If $A$ is a commutative symmetric ring spectrum, we think of
$\Omega^{\cJ}(A)$ as the underlying graded multiplicative monoid of
$A$. The point about $\Omega^{\cJ}(A)$ is that it is built from all
spaces $\Omega^{m_2}(A_{m_1})$, while the $\cI$-space version only
uses the spaces $\Omega^{m}(A_{m})$.  This makes it possible to define
a commutative $\cJ$-space monoid $\GLoneJof(A) \subseteq
\Omega^{\cJ}(A)$ of \emph{graded units} of $A$ from which we can
recover all units in the graded ring $\pi_*(A)$.

\subsection{Graded logarithmic ring spectra}
Given the previous discussion, we define a \emph{graded pre-log
  structure} on a commutative symmetric ring spectrum $A$ to be a
commutative $\cJ$-space monoid $M$ together with a map $\alpha\colon M
\to \Omega^{\cJ}(A)$. It is a log structure if
$\alpha^{-1}(\GLoneJof(A)) \to \GLoneJof(A)$ is a weak equivalence in
$\cC\cS^{\cJ}$. The pullback
\begin{equation}\label{eq:direct-image-of-trivial-J}
\GLoneJof(KU) \to \Omega^{\cJ}(KU) \ot \Omega^{\cJ}(ku)
\end{equation}
provides an interesting non-trivial graded log structure $i_*\!\GLoneJof(KU)$
on $ku$. In analogy with the algebraic
situation~\eqref{eq:AtoK-factorization}, 
the $(ku,i_*\!\GLoneJof(KU))$ is part of a factorization
\begin{equation}\label{eq:ku-KU-factorization}
(ku,\GLoneJof(ku) ) \to (ku,i_*\!\GLoneJof(KU)) \to (KU,\GLoneJof(KU)).
\end{equation}

Let $(M,\alpha)$ be a graded log structure on
$A$. In~\cite{Sagave_spectra-of-units}, we construct a group
completion $M \to M^{\gp}$ for commutative $\cJ$-space
monoids. Together with the adjoint of $\alpha\colon M \to
\Omega^{\cJ}(A)$ under the adjunction~\eqref{eq:OmegaJ-adj-intro}, the
group completion induces maps
\[\mS^{\cJ}[M^{\gp}] \ot \mS^{\cJ}[M]\to A\]
in $\cC\Spsym$. The \emph{localization} $A[M^{-1}]$ of $(A,M)$ is the
pushout of this diagram in commutative symmetric ring spectra. As one
may hope from the algebraic example, the next theorem allows us to
interpret $(ku,i_*\!\GLoneJof(KU))$ as an approximation to $KU$. It is
the special case of a more general theorem about log structures on
connective covers of periodic ring spectra.

\begin{theorem}\label{thm:log-ell-ku-introduction}
  The map $(ku, i_*\!\GLoneJof(KU)) \to (KU, \GLoneJof(KU))$ induces a
  stable equivalence between the localization
  $ku[(i_*\!\GLoneJof(KU))^{-1}]$ and the periodic spectrum~$KU$. A
  similar statement holds for the $p$-complete and $p$-local
  connective complex $K$-theory spectra $ku_p$ and $ku_{(p)}$ and their
  Adams summands $\ell_p$ and $\ell$.
\end{theorem}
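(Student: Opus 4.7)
My overall strategy is to turn the heuristic that $M := i_*\GLoneJof(KU)$ records precisely the submonoid of $\Omega^{\cJ}(ku)$ generated by the Bott class into a concrete computation of the homotopy pushout that defines the localization $ku[M^{-1}]$.

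First I would verify that the group completion map $M^{\gp} \to \GLoneJof(KU)$ induced by~\eqref{eq:direct-image-of-trivial-J} is a weak equivalence in $\cC\cS^{\cJ}$. The commutative $\cJ$-space monoid $M$ is, by construction, the pullback of $\GLoneJof(KU) \to \Omega^{\cJ}(KU) \ot \Omega^{\cJ}(ku)$, and on homotopy groups this picks out the non-negative part $\{\pm u^n : n \ge 0\}$ of $\pi_*(KU)^{\times}$. Group-completing inverts $u$ and recovers all of $\pi_*(KU)^{\times} \iso \pi_*\GLoneJof(KU)$. I would upgrade this $\pi_*$-level computation to a weak equivalence of commutative $\cJ$-space monoids using the detection properties of graded units from~\cite{Sagave-S_diagram} together with the fact that $\GLoneJof(KU)$ is already grouplike, so the group completion it represents is recovered on the nose by the universal map from $M$.

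Next I would analyze the defining homotopy pushout
\[
ku[M^{-1}] \;\simeq\; \mS^{\cJ}[M^{\gp}] \cup^{h}_{\mS^{\cJ}[M]} ku
\]
in $\cC\Spsym$. The map $\mS^{\cJ}[M] \to ku$ adjoint to the structure map $\alpha\colon M \to \Omega^{\cJ}(ku)$ is a $\cJ$-space model of $\mS[u] \to ku$, and composing it with the group completion $\mS^{\cJ}[M] \to \mS^{\cJ}[M^{\gp}]$ is meant to realize the universal map inverting the Bott class at the level of graded $E_\infty$-monoids. After suitable cofibrant replacement the pushout should therefore be the spectrum-level localization of $ku$ inverting $u$, which is classically $KU$. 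The factorization~\eqref{eq:ku-KU-factorization} supplies a canonical map $ku[M^{-1}] \to KU$, and I would conclude by showing that this map induces an isomorphism on $\pi_*$ and hence is a stable equivalence.

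The main obstacle I anticipate is the rigorous identification of the pushout $\mS^{\cJ}[M^{\gp}] \cup^{h}_{\mS^{\cJ}[M]} ku$ with the telescopic Bott-inverted spectrum $ku[u^{-1}]$: because $\mS^{\cJ}[-]$ is a left Quillen functor and the group completion $M \to M^{\gp}$ of~\cite{Sagave_spectra-of-units} is built from a bar construction in $\cC\cS^{\cJ}$, one has to keep careful track of cofibrancy and homotopy invariance in order to compare the $\cJ$-space pushout with the concrete spectrum-level localization. For the $p$-local, $p$-complete, and Adams-summand variants, the same argument applies with the Bott class replaced by the appropriate periodicity generator ($u$ for $ku_{(p)}$ and $ku_p$, and $v_1$ for $\ell$ and $\ell_p$); alternatively one may deduce the local and completed statements from the integral one, using the compatibility of the graded log construction with Bousfield localization at a prime and the fact that group completion of commutative $\cJ$-space monoids commutes with these localizations up to weak equivalence.
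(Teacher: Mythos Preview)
Your outline has the right target but is missing the key reduction step, and the part you flag as ``the main obstacle'' is indeed a genuine gap that your proposal does not close.

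The serious issue is step two. Even if you establish $M^{\gp}\simeq\GLoneJof(KU)$, you still have to identify the homotopy pushout $\mS^{\cJ}[M^{\gp}]\sm^{\mathbb L}_{\mS^{\cJ}[M]}ku$ with $KU$. Your claim that $\mS^{\cJ}[M]\to ku$ ``is a $\cJ$-space model of $\mS[u]\to ku$'' is not correct: $M=i_*\GLoneJof(KU)$ is far from free on one generator (each of its components already has the homotopy type of a component of $\GLoneJof(KU)$), so $\mS^{\cJ}[M]$ is nothing like a polynomial sphere spectrum. And computing $\pi_*$ of a homotopy pushout of commutative ring spectra directly is not feasible in general. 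There is also a factual slip: the group completion in $\cC\cS^{\cJ}$ is \emph{not} built from a bar construction (the paper emphasizes in~\S\ref{sec:CSJ-group-compl} that $U^{\cJ}$ is not terminal, so there is no two-sided bar construction); it is a fibrant replacement in a localized model structure.

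The paper's route avoids computing this pushout directly. It introduces two auxiliary pre-log structures on $A$: the \emph{free} one $C(x)$ generated by a representative of the Bott class, and an intermediate $D(x)$ obtained by pulling back $C(x)^{\gp}$ along $\Omega^{\cJ}(A)\to\Omega^{\cJ}(A[1/x])$. The foundational computation is Proposition~\ref{prop:localization-of-free}: $A[C(x)^{-1}]\simeq A[1/x]$, proved using the universal property of $A[1/x]$ and the lifting axioms in the group completion model structure. One then checks (Lemma~\ref{lem:localization-of-Dx}) that $A[D(x)^{-1}]\simeq A[C(x)^{-1}]$ by showing the relevant pushout is grouplike, and (Lemma~\ref{lem:logification-of-Dx}) that the logification of $D(x)$ is $\cJ$-equivalent to $i_*\GLoneJof(A[1/x])$. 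Finally, Lemma~\ref{lem:localization-and-logification} says localization is invariant under logification, completing the chain. The point is that $C(x)$ is small enough that its localization can actually be identified, while $D(x)$ mediates between $C(x)$ and the large object $i_*\GLoneJof(KU)$ you are trying to work with directly.
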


The proof of the theorem depends heavily on our analysis of the group
completion for commutative $\cJ$-space monoids
in~\cite{Sagave_spectra-of-units}. The various $\cI$-space and
operadic pre-log and log structures considered by
Rognes~\cite{Rognes_TLS} do not have this property.

\subsection{Formally log \'{e}tale extensions} Let $(f,f^{\flat})
\colon (R,P,\rho) \to (A,M,\alpha)$ be a map of log rings in the
algebraic setup. If $X$ is an $A$-module, a log derivation of
$(f,f^{\flat})$ with values in $X$ is a pair $(D,\delta)$ where
$D\colon A \to X$ is an ordinary $R$-linear derivation of $A$ and
$\delta\colon M \to X$ is a monoid map such that $\delta f^{\flat} =
0$ and $D(\alpha(m)) = \alpha(m)
\delta(m)$~\cite{Kato_logarithmic-structures, Ogus_log-book}. The
resulting set of log derivations $\Der_{(R,P)}((A,M), X)$ is
corepresented by the $A$-module of \emph{log K\"ahler differentials}
$\Omega^1_{(A,M)/(R,P)}$.  This module is isomorphic to the quotient
of $\Omega^1_{A/R} \oplus (A\otimes M^{\gp})$ obtained by imposing the
relations $d(\alpha(m)) = \alpha(m)\tensor m$ and $1\tensor
f^{\flat}(p) = 0$ for monoid elements $m\in M$ and $p\in P$. Writing
$a\dlog m$ for $a \tensor m$, the first relation shows that $\dlog m$
has the properties of a logarithmic differential. This is the source
of the term ``log'' in this theory.  The relation also shows that
generators of the form $d\alpha(m)$ in $\Omega^1_{A/R}$ become once
divisible by $\alpha(m)$ when passing to $\Omega^1_{(A,M)/(R,P)}$. So
it is a milder localization of $\Omega^1_{A/R}$ than
$\Omega^1_{A[M^{-1}]/R}$.

In the context of ring spectra, Basterra~\cite{Basterra_Andre-Quillen}
has shown that $R$-linear derivations of $A$ are corepresented by the
\emph{topological Andr\'{e}-Quillen homology} $\TAQ^R(A)$. Building on
Basterra's result, Rognes has shown that log derivations of log ring
spectra are corepresented by a \emph{log topological Andr\'{e}-Quillen
  homology}. In the present paper, we construct the corresponding
\emph{graded log topological Andr\'{e}-Quillen homology}
$\TAQ^{(R,P)}(A,M)$. It corepresents graded log derivations. The
definition and the analysis of the graded log $\TAQ$ rely on the
equivalence between the homotopy category of grouplike commutative
$\cJ$-space monoids and a suitable homotopy category of augmented
connective spectra established in~\cite{Sagave_spectra-of-units}.

A map $(R,P) \to (A,M)$ of graded log ring spectra is \emph{formally
  graded log \'{e}tale} if the graded log topological
Andr\'{e}-Quillen homology $\TAQ^{(R,P)}(A,M)$ is contractible.
\begin{theorem}\label{thm:ell-to-k-formally-etale-intro}
The inclusion $\ell_p \to ku_p$ extends to map of graded log ring spectra
\[(\ell_p, i_*\!\GLoneJof(L_p)) \to (ku_{p}, i_*\!\GLoneJof(KU_{p})),\]
and this map is formally graded log \'{e}tale. The same hold in the $p$-local case. 
\end{theorem}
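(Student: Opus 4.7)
The plan is to first construct the required map of graded log ring spectra by naturality, and then to verify the vanishing of $\TAQ$ via a transitivity cofiber sequence whose outer terms both turn out to be contractible.

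For the construction, I would exploit the naturality of the direct image construction from \eqref{eq:direct-image-of-trivial-J}. The inclusion $\ell_p \to ku_p$ fits into a commutative square with $L_p \to KU_p$, coming from the universal property of localization (the class $v_1$ is sent to $u^{p-1}$, and once $v_1$ is inverted it remains invertible in $KU_p$). Applying $\Omega^{\cJ}$ and $\GLoneJ$ to this square and taking pullbacks yields a map of the pullback squares defining $i_*\!\GLoneJof(L_p)$ and $i_*\!\GLoneJof(KU_p)$, and hence the desired extension. The pullbacks are graded log structures, rather than only pre-log structures, because $\GLoneJof$ of a periodic spectrum already captures all graded units.

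For the formal étaleness, I would establish a transitivity cofiber sequence of $ku_p$-modules of the form
\[
\TAQ^{\ell_p}(ku_p) \longrightarrow \TAQ^{(\ell_p,\, i_*\!\GLoneJof(L_p))}(ku_p,\, i_*\!\GLoneJof(KU_p)) \longrightarrow ku_p \sm C,
\]
where $C$ is the connective spectrum corresponding, under the equivalence between grouplike commutative $\cJ$-space monoids and augmented connective spectra of \cite{Sagave_spectra-of-units}, to the cofiber of the induced map $i_*\!\GLoneJof(L_p)^{\gp} \to i_*\!\GLoneJof(KU_p)^{\gp}$. This is the ring-spectrum analogue of the algebraic exact sequence relating log Kähler differentials to ordinary Kähler differentials and a cokernel term built from group-completed log structures; corepresentability of graded log derivations by graded log $\TAQ$ is what produces it.

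The first outer term is contractible because $\ell_p \to ku_p$ is a faithful $G$-Galois extension with $G = (\mZ/p)^{\times}$ in the sense of Rognes, and $|G|=p-1$ is invertible in $\mZ_p$, so the extension is formally étale. For the second outer term, $\pi_0(C)$ is generated by the class $[u]$ subject to the relation $[v_1]=[u^{p-1}]=0$, hence $\pi_0(C) \cong \mZ/(p-1)$, and the higher homotopy groups of $C$ are likewise annihilated by $p-1$ because the difference between the two graded unit spectra is carried by multiplication by $p-1$ on the relevant generator. Since $p-1 \in \mZ_p^{\times}$, multiplication by $p-1$ acts as an equivalence on $ku_p$, and thus $ku_p \sm C \simeq \pt$. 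The $p$-local statement follows identically using $p-1 \in \mZ_{(p)}^{\times}$. The principal obstacle is establishing the transitivity cofiber sequence in the correct homotopical form, which requires a careful setup of graded log derivations, the verification of their corepresentability by the graded log $\TAQ$, and the precise identification of the log-part contribution via the augmented connective spectra model of \cite{Sagave_spectra-of-units}.
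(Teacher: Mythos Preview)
Your proposal contains a genuine error: the claim that $\TAQ^{\ell_p}(ku_p)\simeq\pt$ is false. The map $\ell_p\to ku_p$ is \emph{not} a $G$-Galois extension in Rognes' sense; only the periodic map $L_p\to KU_p$ is. On $\pi_*$ one sees this directly: $\pi_*(ku_p\sm_{\ell_p}ku_p)\cong\mZ_p[u,u']/(u^{p-1}-(u')^{p-1})$, and the ideals $(u-\zeta u')$ for the $(p-1)$st roots of unity $\zeta$ are not comaximal (their pairwise sums equal the augmentation ideal $(u,u')$, not the unit ideal), so this ring is not a product of copies of $\mZ_p[u]$. The algebraic shadow $\Omega^1_{\mZ_p[u]/\mZ_p[v_1]}\cong\mZ_p[u]/(u^{p-2})\cdot du$ is nonzero for $p>2$, and correspondingly $\TAQ^{\ell_p}(ku_p)$ is nontrivial. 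This non-vanishing is precisely the ramification at $u=0$ that the log structures are supposed to absorb; the whole content of the theorem is that the \emph{log} $\TAQ$ vanishes even though the ordinary one does not.

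Relatedly, your cofiber sequence is not the one that actually arises from the defining homotopy pushout for graded log $\TAQ$. The cofiber of $\TAQ^R(A)\to\TAQ^{(R,P)}(A,M)$ is the cofiber of $A\sm_{\mS^{\cJ}[M]}\TAQ^{\mS^{\cJ}[P]}(\mS^{\cJ}[M])\to A\sm(\gamma(M)/\gamma(P))$, not simply $A\sm(\gamma(M)/\gamma(P))$. The paper instead uses the cofiber sequence
\[
A\sm(\gamma(M)/\gamma(P))\;\longrightarrow\;\TAQ^{(R,P)}(A,M)\;\longrightarrow\;\TAQ^{C}(A),
\]
where $C$ is the homotopy pushout of $R\ot\mS^{\cJ}[P]\to\mS^{\cJ}[M]$. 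The key step is then not a Galois argument but Proposition~\ref{prop:ku-p-as-hty-pushout}: after replacing the direct image log structures by the weakly equivalent pre-log structures $D(v_1)$ and $D(u)$ (using invariance of log $\TAQ$ under logification), one shows that the square with corners $\mS^{\cJ}[D(v_1)]$, $\ell_p$, $\mS^{\cJ}[D(u)]$, $ku_p$ is homotopy cocartesian, so $C\simeq ku_p$ and $\TAQ^{C}(ku_p)\simeq\pt$. Your identification of the $\gamma$-term is essentially right in spirit: $\gamma(D(v_1))\to\gamma(D(u))$ is multiplication by $p-1$ on the sphere, so the first term vanishes after smashing with $ku_p$. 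But that piece alone cannot carry the argument; the substantive input is the pushout square, which replaces your incorrect appeal to \'etaleness of $\ell_p\to ku_p$.
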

For the proof, we replace these log structures by ``smaller'' pre-log
structures and analyze them using group completions.  As with
Theorem~\ref{thm:log-ell-ku-introduction}, the $\cI$-space and
operadic log structures on $\ell_p$ and $ku_p$ considered by Rognes do
not have this property.

It is also possible to transfer Rognes' notion of \emph{logarithmic
  topological Hochschild homology} to our graded context. Applying
this to the graded log structures discussed above provides interesting
homotopy cofiber sequences relating logarithmic and ordinary
$\THH$. This is studied in detail in~\cite{Rognes-S-S_logTHH}.

\subsection{Organization}
In Section~\ref{sec:spsym-csj-review} we give background about
symmetric spectra and commutative $\cJ$-space monoids.
Section~\ref{sec:graded-log-ring-spectra} contains basic terminology
about graded log structures. In Section~\ref{sec:direct-image-log} we
study various pre-log and log structures generated by homotopy classes
and obtain Theorem~\ref{thm:log-ell-ku-introduction} as a special case
of Theorem~\ref{thm:all-pre-log-on-A} proven there. The graded log
topological Andr\'{e}-Quillen homology is constructed in
Section~\ref{sec:log-taq}.  Section ~\ref{sec:log-etale-maps} features
the proof of Theorem~\ref{thm:ell-to-k-formally-etale-intro}. The
final Section~\ref{sec:CSJ-derivations} contains auxiliary results
about commutative $\cJ$-space monoids.

\subsection{Acknowledgments}
The author thanks John Rognes for introducing him to topological
logarithmic structures, for discussions about the present work, and
for comments on an earlier version of this manuscript. Moreover, the author
likes to thank Christian Schlichtkrull for pointing out a mistake
in an earlier version of this paper.

\section{Symmetric ring spectra and structured diagram
  spaces}\label{sec:spsym-csj-review}
The category of symmetric spectra $\Spsym$ ~\cite{HSS} is a stable
model category whose homotopy category is equivalent to the stable
homotopy category of algebraic topology. The smash product of
symmetric spectra $\sm$ induces the smash product on the homotopy
category. A \emph{commutative symmetric ring spectrum} is a
commutative monoid in $(\Spsym, \sm)$. We write $\cC\Spsym$ for the
category of commutative symmetric ring spectra. It admits a
\emph{positive stable model structure}~\cite{MMSS} making it Quillen
equivalent to the category of $E_{\infty}$ spectra. So all homotopy types
of $E_{\infty}$ spectra are represented by commutative symmetric ring
spectra, and in many cases it is even possible to write down explicit
models in $\cC\Spsym$~\cite{Schwede_SymSp}.

\subsection{Commutative \texorpdfstring{$\cI$}{I}-space monoids}
We will now recall from~\cite[\S 3]{Sagave-S_diagram} how one can use
structured diagram spaces to model the ``underlying multiplicative
$E_{\infty}$ spaces'' of commutative symmetric ring spectra.
\begin{definition}
  Let $\cI$ be the category with objects the sets
  $\bld{m}=\{1,\dots,m\}$ for $m \geq 0$ and morphisms the injective
  maps. The ordered concatenation $\concat$ of ordered sets turns
  $\cI$ into a symmetric monoidal category with strict unit and
  associativity. The monoidal unit is the empty set $\bld{0}$. The
  symmetry isomorphism is the shuffle $\chi_{m,n}\colon
  \bld{m}\concat\bld{n} \to \bld{n}\concat \bld{m}$ moving the first
  $m$ elements past the last $n$ elements.
\end{definition}

An \emph{$\cI$-space} is a functor from $\cI$ to the category of
unpointed simplicial sets~$\cS$. The resulting functor category
$\cS^{\cI}$ of $\cI$-spaces inherits a symmetric monoidal product
$\boxtimes$ from $\cI$ and $\cS$: For $\cI$-spaces $X$ and $Y$,
the product $X \boxtimes Y$ is the left Kan extension of their object-wise
cartesian product along $- \concat - \colon \cI \times \cI \to \cI$. So
\[ (X\boxtimes Y)(\bld{n}) = \colim_{\bld{k}\concat\bld{l} \to
  \bld{n}} X(\bld{k})\times Y(\bld{l}).\] The monoidal unit for
$\boxtimes$ is the $\cI$-space $U^{\cI}=\cI(\bld{0},-)$.

\begin{definition}
  A \emph{commutative $\cI$-space monoid} is a commutative monoid in
  $(\cS^{\cI},\boxtimes, U^{\cI})$, and $\cC\cS^{\cI}$ denotes the
  category of commutative $\cI$-space monoids.
\end{definition}
More explicitly, a commutative $\cI$-space monoid $M$ is an
$\cI$-space $M$ together with multiplications $M(\bld{k})\times
M(\bld{l}) \to M(\bld{k}\concat\bld{l})$ and a unit map $* \to
M(\bld{0})$ satisfying appropriate coherence conditions.

While strictly commutative simplicial monoids fail to model all
homotopy types of $E_{\infty}$ spaces, the additional symmetry of
$\cI$-spaces and the use of a \emph{positive} model structure ensure
that $E_{\infty}$ spaces admit strictly commutative models in
$\cI$-spaces:
\begin{theorem}\cite[Theorem 1.2]{Sagave-S_diagram}
  The category of commutative $\cI$-space monoids $\cC\cS^{\cI}$
  admits a positive $\cI$-model structure making it Quillen equivalent
  to the category of $E_{\infty}$ spaces.
\end{theorem}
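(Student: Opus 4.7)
The plan is to first construct the positive $\cI$-model structure on the underlying diagram category $\cS^{\cI}$, then transfer it along the free commutative monoid adjunction, and finally compare with $E_{\infty}$ spaces via the homotopy colimit functor over $\cI$.

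First I would equip $\cS^{\cI}$ with a projective level model structure tested only at the objects $\bld{m}$ with $m \geq 1$ (this is the positive variant), whose generating cofibrations are the boundary inclusions $\cI(\bld{m},-) \times \partial\Delta^n \to \cI(\bld{m},-) \times \Delta^n$ for $m \geq 1$. Bousfield localizing with respect to the morphisms $\cI(\bld{n},-) \to \cI(\bld{m},-)$ induced by the injections of $\cI$ would produce the positive $\cI$-model structure, whose weak equivalences are the $\cI$-equivalences, i.e.\ those maps inducing weak equivalences of spaces after taking homotopy colimits over $\cI$. Compatibility of $\boxtimes$ with this structure (the pushout-product axiom) is then a diagram chase using the shuffle symmetries.

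Next I would transfer this structure along the free commutative monoid adjunction $\mathbb{C}\colon \cS^{\cI} \rightleftarrows \cC\cS^{\cI}\colon U$ via the standard Kan lifting criterion. The essential input is a flatness statement: for a positively cofibrant $X$, the quotient map $E\Sigma_n \times_{\Sigma_n} X^{\boxtimes n} \to X^{\boxtimes n}/\Sigma_n$ must be a weak equivalence. This holds because the iterated concatenation $\bld{m}^{\concat n}$ carries a free $\Sigma_n$-action for $m \geq 1$, so the shuffle symmetries act freely on representables at positive objects and hence, by induction along cell attachments, on all positive cell complexes. For the Quillen equivalence with $E_{\infty}$ spaces I would then use the derived homotopy colimit $\hocolim_{\cI}$ together with an appropriate right adjoint. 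One verifies that $\hocolim_{\cI}$ carries a commutative $\cI$-space monoid to an $E_{\infty}$ space, using that $B\cI$ is contractible so that the shuffle data assemble into a genuine operadic multiplication up to coherent homotopy. Checking that the derived unit and counit are weak equivalences amounts to saying that every $E_{\infty}$ space admits a strictly commutative $\cI$-space replacement, which the flatness property above makes possible.

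The main obstacle is the flatness hypothesis underpinning the transfer step. It requires a careful filtration-by-symmetric-powers analysis of $\mathbb{C}(X) = \coprod_{n \geq 0} X^{\boxtimes n}/\Sigma_n$ for $X$ a positive cell complex, inductively checking that pushouts along $\mathbb{C}(j)$ of generating (acyclic) positive cofibrations $j$ remain $\cI$-equivalences; the positivity of the cells is what prevents the symmetric-group quotients from collapsing homotopical information at $\bld{0}$, where the trivial representation would otherwise obstruct the argument. Once this filtration analysis is in place, both the lifting of the model structure and the comparison with the operadic definition of $E_{\infty}$ spaces proceed by now-standard machinery.
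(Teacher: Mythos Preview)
The paper does not prove this theorem at all: it is stated with the citation \cite[Theorem 1.2]{Sagave-S_diagram} and no argument is given, since the result is imported wholesale from that reference. So there is no ``paper's own proof'' to compare your proposal against.

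That said, your outline is a reasonable sketch of the strategy actually carried out in the cited reference. The construction of the positive $\cI$-model structure on $\cS^{\cI}$ and its transfer along the free commutative monoid adjunction, with the crucial input being the freeness of the $\Sigma_n$-action on $\bld{m}^{\concat n}$ for $m \geq 1$, is exactly the mechanism used there. One point to sharpen: the comparison with $E_{\infty}$ spaces in \cite{Sagave-S_diagram} does not proceed directly via $\hocolim_{\cI}$ as a left Quillen functor to $E_{\infty}$ spaces; rather, one rectifies an $E_{\infty}$ operad action to a strictly commutative $\cI$-space monoid structure and checks the derived unit and counit. Your description of $\hocolim_{\cI}$ carrying commutative $\cI$-space monoids to $E_{\infty}$ spaces is morally correct (the monoidal structure map and the contractibility of $B\cI$ supply the operadic coherences), but making this into a Quillen adjunction with the right formal properties requires more care than your sketch suggests. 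Still, the essential ideas you identify---positivity to ensure free symmetric group actions, and the filtration analysis of pushouts along free maps---are the heart of the matter.
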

The weak equivalences in this \emph{positive $\cI$-model structure}
are the \emph{$\cI$-equivalences}, i.e., the maps $f\colon M \to N$
which induce weak equivalences $f_{h\cI}\colon M_{h\cI} \to N_{h\cI}$
on homotopy colimits. Here
\[ M_{h\cI}=\hocolim_{\cI}M = \diag\left( [s] \mapsto
  \displaystyle\coprod_{\bld{k_0} \ot \dots \ot \bld{k_s}}
  M(\bld{k_s})\right)\] is the usual Bousfield-Kan homotopy colimit.
A commutative $\cI$-space monoid $A$ is fibrant in this model
structure if every morphism $\bld{k} \to \bld{l}$ in $\cI$ with $k\geq
1$ induces a weak equivalence of Kan-complexes $M(\bld{k}) \to
M(\bld{l})$. The positivity condition $k\geq 1$ ensures that we do not
represent the common homotopy type of the $M(\bld{k})$ by a
commutative simplicial monoid.

Commutative $\cI$-space monoids are relevant in connection with
symmetric ring spectra because there is a Quillen adjunction
\[
\mS^{\cI}[-] \colon \cC\cS^{\cI} \rightleftarrows \cC\Spsym \colon \Omega^{\cI}
\]
with respect to the positive model structures~\cite[Proposition
3.19]{Sagave-S_diagram}. The right adjoint $\Omega^{\cI}$ is on
objects given by $\Omega^{\cI}(A)(\bld{m})=\Omega^m(A_m)$, and the
multiplication $\Omega^{k}(A_k)\times\Omega^{l}(A_l) \to
\Omega^{k+l}(A_{k+l})$ sends $(f,g)$ to the composite
\begin{equation}\label{eq:mult-on-OmegaI} S^{k+l}\iso S^{k} \sm S^{l} \xrightarrow{f\sm g} A_{k} \sm A_{l}
  \to A_{k+l}.\end{equation}
For a positive fibrant $A$ in $\cC\Spsym$, the $\Omega^{\cI}(A)$
models the underlying multiplicative $E_{\infty}$ space of $A$. The
\emph{units} $\GLoneIof(A)$ of $A$ is the sub commutative $\cI$-space monoid of
$\Omega^{\cI}(A)$ consisting of those path components that map to
units in the commutative monoid $\pi_0(\Omega^{\cI}(A)_{h\cI}) \iso
\pi_0(A)$.

\subsection{Commutative \texorpdfstring{$\cJ$}{J}-space monoids}
As explained in the introduction, the units $\GLoneIof(A)$ and other
equivalent approaches used in the literature have the undesirable
feature that they do not detect the difference between a periodic
spectrum and its connective cover. We now recall from~\cite[\S
4]{Sagave-S_diagram} how we can overcome this by using a more subtle
indexing category for structured diagram spaces.
\begin{definition}
  Let $\cJ$ be the category whose objects $(\bld{m_1},\bld{m_2})$ are
  pairs of objects in $\cI$. There are no morphisms
  $(\bld{m_1},\bld{m_2})\to (\bld{n_1},\bld{n_2})$ unless $m_2-m_1 =
  n_2 -n_1$, and in this case a morphism $(\bld{m_1},\bld{m_2})\to
  (\bld{n_1},\bld{n_2})$ is a triple $(\beta_1, \beta_2, \sigma)$ with
  the $\beta_i\colon \bld{m_i}\to\bld{n_i}$ injections and $\sigma
  \colon \bld{n_1}\setminus\beta_1(\bld{m_1}) \to
  \bld{n_2}\setminus\beta_2(\bld{m_2})$ a bijection identifying the
  complements of $\beta_1$ and $\beta_2$. The composition of
  \[ (\bld{l_1},\bld{l_2}) \xrightarrow{(\alpha_1,\alpha_2,\rho)}
  (\bld{m_1},\bld{m_2})\xrightarrow{(\beta_1, \beta_2, \sigma)}
  (\bld{n_1},\bld{n_2}) \] is $\beta_i\alpha_i$ in the first two
  entries and the map $\sigma \cup \beta_2\rho\beta_1^{-1}$ in the
  third entry.
\end{definition}
This category $\cJ$ is equivalent to Quillen's localization
construction $\Sigma^{-1}\Sigma$ on the category of finite sets and
bijections $\Sigma$. By the Barratt-Priddy-Quillen theorem (see
e.g.~\cite{Segal_categories}), the classifying space $B\cJ$ has the
homotopy type of $QS^0\simeq \Omega^{\infty}\Sigma^{\infty}S^0$.

Concatenation in both entries makes $\cJ$ a symmetric monoidal
category with monoidal unit $(\bld{0},\bld{0})$. As in the case of
$\cI$-spaces, we obtain a symmetric monoidal category of $\cJ$-spaces
$(\cS^{\cJ},\boxtimes, U^{\cJ})$ with unit
$U^{\cJ}=\cJ((\bld{0},\bld{0}),-)$.
\begin{definition}
  A \emph{commutative $\cJ$-space monoid} is a commutative monoid in
  $(\cS^{\cJ},\boxtimes, U^{\cJ})$, and $\cC\cS^{\cJ}$ denotes the
  category of commutative $\cJ$-space monoids.
\end{definition}
One can define a \emph{positive $\cJ$-model structure} on
$\cC\cS^{\cJ}$ in which the weak equivalences are the
$\cJ$-equivalences, i.e., the maps $f\colon M \to N$ inducing weak
equivalences $f_{h\cJ}\colon M_{h\cJ}\to N_{h\cJ}$ on homotopy
colimits over $\cJ$~\cite[Proposition 4.10]{Sagave-S_diagram}. The
\emph{positive $\cJ$-fibrant} objects are those $M$ for which all
morphisms $(\bld{m_1},\bld{m_2})\to (\bld{n_1},\bld{n_2})$ with
$m_1\geq 1$ induce weak equivalences of Kan complexes
$M(\bld{m_1},\bld{m_2}) \to M(\bld{n_1},\bld{n_2})$. We will
frequently use that this model structure is both left and right proper.
\begin{theorem}\cite[Theorem 1.7]{Sagave-S_diagram}
  With respect to the positive $\cJ$-model structure, $\cC\cS^{\cJ}$
  is Quillen equivalent to the category of $E_{\infty}$-spaces over
  $B\cJ$.
\end{theorem}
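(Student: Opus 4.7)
The strategy parallels the proof of the $\cI$-space analogue (Theorem~1.2 above) but must account for the fact that $B\cJ \simeq QS^0$ is no longer contractible, so the target of the Quillen equivalence is $E_{\infty}$-spaces over $B\cJ$ rather than plain $E_{\infty}$-spaces. The conceptual point is that because $\cJ$ is itself a symmetric monoidal category, a commutative monoid in $\cS^{\cJ}$ naturally produces, after taking homotopy colimits, a commutative monoid in the slice category $\cS/B\cJ$, which is precisely an $E_{\infty}$-space over $B\cJ$.

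First I would construct the Quillen adjunction. In one direction the functor sends a commutative $\cJ$-space monoid $M$ to $M_{h\cJ}\to B\cJ$; the $E_{\infty}$-structure on $M_{h\cJ}$ comes from a Thomason-style observation that the symmetric monoidal structures on $\cJ$ and on $M$ combine to make the Bousfield--Kan homotopy colimit into a lax symmetric monoidal functor $(\cS^{\cJ},\boxtimes)\to (\cS/B\cJ,\times_{B\cJ})$. In the other direction, given $Y\to B\cJ$ one pulls back along an appropriate universal $\cJ$-bundle over $B\cJ$ (the Grothendieck construction on $\cJ$) to produce a $\cJ$-diagram whose homotopy colimit is equivalent to $Y$, and one rectifies the induced $E_{\infty}$-structure on this diagram to a strictly commutative one. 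The positivity condition $m_1\geq 1$ in the $\cJ$-model structure plays its usual role of making the symmetric group actions on $M(\bld{m_1},\bld{m_2})$ free enough for such rectification to succeed and for strictly commutative monoids to represent all homotopy types.

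The main obstacle will be showing that the derived unit and counit are weak equivalences. For the counit, the key is a rectification statement: any $E_{\infty}$-space over $B\cJ$ must be equivalent to one of the form $M_{h\cJ}\to B\cJ$ for some \emph{strictly} commutative $M\in\cC\cS^{\cJ}$. I expect to reduce this fiberwise over $B\cJ$ to the corresponding rectification statement for ordinary $E_{\infty}$-spaces, using that the homotopy fiber of $M_{h\cJ}\to B\cJ$ over an object $(\bld{m_1},\bld{m_2})$ is equivalent to the homotopy colimit of $M$ restricted to the connected component of $(\bld{m_1},\bld{m_2})$ in $\cJ$, which is equivalent to a component of $M(\bld{m_1},\bld{m_2})$. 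For the unit $M\to\text{(the commutative $\cJ$-space monoid recovered from $M_{h\cJ}$)}$, I would verify the equivalence levelwise, combining right properness of the positive $\cJ$-model structure with a Bousfield--Kan type analysis comparing $M(\bld{m_1},\bld{m_2})$ to the appropriate homotopy fiber; the technical heart lies here, since it is where the combinatorics of $\cJ$ enters most delicately.
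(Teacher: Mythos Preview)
The paper does not contain a proof of this theorem: it is stated with a citation to \cite[Theorem~1.7]{Sagave-S_diagram} and is used only as background, so there is no argument in the present paper to compare your proposal against. Your sketch is a plausible outline of the strategy in the cited reference, but the comparison you were asked to make is vacuous here.

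That said, a few remarks on your sketch as it stands. The actual argument in \cite{Sagave-S_diagram} does not proceed by directly building an adjunction between $\cC\cS^{\cJ}$ and $E_{\infty}$-spaces over $B\cJ$ in the manner you describe; rather, it passes through an intermediate category of $E_{\infty}$ objects in $\cJ$-spaces (algebras over a chosen $E_{\infty}$ operad $\cD$ in $\cS^{\cJ}$), first rectifying $\cD$-algebras to strictly commutative $\cJ$-space monoids and then comparing $\cD$-algebras in $\cJ$-spaces with $\cD$-algebras in spaces over $B\cJ$ via the $(\mathrm{colim}_{\cJ},\mathrm{const}_{\cJ})$ adjunction. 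Your description of the right adjoint as ``pull back along a universal $\cJ$-bundle and rectify'' is morally in this direction but conflates two distinct steps, and your claim that the homotopy fiber of $M_{h\cJ}\to B\cJ$ over $(\bld{m_1},\bld{m_2})$ is ``equivalent to a component of $M(\bld{m_1},\bld{m_2})$'' is not correct in general without a fibrancy hypothesis on $M$; this is exactly what positive $\cJ$-fibrancy buys, and the precise statement is \cite[Lemma~4.2]{Sagave-S_diagram}.
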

As explained in the introduction, this allows us to interpret
commutative $\cJ$-space monoids as \emph{graded $E_{\infty}$ spaces}. The
point in using this specific category $\cJ$ is that there is a Quillen adjunction
\begin{equation}\label{eq:CSJ-CSpsym-adjunction}
\mS^{\cJ}[-] \colon \cC\cS^{\cJ} \rightleftarrows \cC\Spsym \colon \Omega^{\cJ}
\end{equation}
with respect to the positive model structures. On objects, the right
adjoint is given by $\Omega^{\cJ}(A)(\bld{m_1},\bld{m_2}) =
\Omega^{m_2}(A_{m_1})$, and the multiplication on $\Omega^{\cJ}(A)$ is
defined similarly as in~\eqref{eq:mult-on-OmegaI}.  The structure maps
of $\Omega^{\cJ}(A)$ depend on the bijections in the definition of the
morphisms in $\cJ$, see~\cite[\S 4.21]{Sagave-S_diagram} and~\cite[\S
2.9]{Sagave_spectra-of-units}.

For a positive fibrant $A$ in $\cC\Spsym$, we therefore view
$\Omega^{\cJ}(A)$ as the \emph{underlying graded multiplicative
  $E_{\infty}$ space} of $A$. This terminology is justified as
follows: Every point $f\colon S^{m_2}\to A_{m_1}$ in
$\Omega^{\cJ}(A)(\bld{m_1},\bld{m_2})$ represents a homotopy class in
$\pi_{m_2-m_1}(A)$, and one can recover the underlying graded
multiplicative monoid of $\pi_*(A)$ together with its sign action of
$\{\pm 1\}$ from $\Omega^{\cJ}(A)$~\cite[\S
4.21]{Sagave-S_diagram}. (The sign action is inherent to commutative
$\cJ$-space monoids because $\pi_1(B\cJ) \iso \pi_1(\mS) \iso \mZ/2$.)
\begin{definition}
  The \emph{graded units} $\GLoneJof(A)$ of a positive fibrant
  commutative symmetric ring spectrum $A$ is the sub commutative
  $\cJ$-space monoid of $\Omega^{\cJ}(A)$ given by the path components
  that map to units in the graded ring $\pi_*(A)$.
\end{definition}
We collect some results about $\cJ$-spaces needed below. One of the
benefits of working with strictly commutative monoids in $\cS^{\cI}$
or $\cS^{\cJ}$ rather than with (augmented) $E_{\infty}$ spaces is
that coproducts and pushouts admit an explicit construction, similarly
as in commutative rings or commutative ring spectra: The coproduct of
$M$ and $N$ in $\cC\cS^{\cJ}$ is $M\boxtimes N$, and the pushout of $M
\ot P \to N$ is the coequalizer $M \boxtimes_{P} N$ of $M\boxtimes P
\boxtimes N \rightrightarrows M \boxtimes N$.

We will often have to ensure that the coproduct $M\boxtimes N$ is
homotopy invariant in a suitable sense. For this, a cofibrancy
condition is necessary.
\begin{definition}
  Let $\partial(\cJ\downarrow(\bld{n_1},\bld{n_2}))$ be the full
  subcategory on the objects in $\cJ\downarrow(\bld{n_1},\bld{n_2})$
  which are not isomorphisms. A $\cJ$-space $X$ is \emph{flat} if the
  latching map
  \[ L_{(\bld{n_1},\bld{n_2})}X = \colim_{(\bld{m_1},\bld{m_2}) \to
    (\bld{n_1},\bld{n_2})\in\partial(\cJ\downarrow(\bld{n_1},\bld{n_2}))}X(\bld{m_1},\bld{m_2})
  \;\;\to\;\; X(\bld{n_1},\bld{n_2})\] is a cofibration of simplicial sets for
  every object $(\bld{n_1},\bld{n_2}) $ of $\cJ$. A commutative
  $\cJ$-space monoid $M$ is flat if its underlying $\cJ$-space is.
\end{definition}
This is the counterpart to the notion of flat (or $S$-cofibrant)
symmetric spectra~\cite{Schwede_SymSp}. Cofibrant objects in the
positive $\cJ$-model structure on $\cC\cS^{\cJ}$ are
flat~\cite[Proposition 4.28]{Sagave-S_diagram}. The free
$\cJ$-space $\cJ((\bld{m_1},\bld{m_2}),-)$ with
$\cJ((\bld{m_1},\bld{m_2}),-)_{h\cJ}\simeq *$ is flat.
If $X$ is a flat $\cJ$-space, then $X\boxtimes -$ preserves
$\cJ$-equivalences between all objects~\cite[Proposition
8.2]{Sagave-S_diagram}. This is useful because $X\boxtimes Y$ captures
the homotopy type of the left derived product as soon as \emph{one of}
$X$ or $Y$ is flat

As explained for the case of $\cI$-spaces in~\cite[\S
2.24]{Sagave-S_group-compl}, the homotopy colimit functor
$(-)_{h\cJ}\colon \cS^{\cJ} \to \cS$ is a monoidal (but not symmetric
monoidal) functor. This means in particular that there is a natural
monoidal structure map
\begin{equation}\label{eq:monoidal-structure-map}
 X_{h\cJ} \times Y_{h\cJ} \to (X\boxtimes Y)_{h\cJ} 
\end{equation}
for $\cJ$-spaces $X$ and $Y$. Under a flatness hypothesis, we can use this map
to analyze the homotopy type of $(X\boxtimes Y)_{h\cJ}$:
\begin{lemma}\label{lem:monoidal-structure-map-equiv}
  If one of $X$ or $Y$ is flat, then the
  map~\eqref{eq:monoidal-structure-map} is a weak equivalence.
\end{lemma}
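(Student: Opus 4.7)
The plan is to reduce the general statement to the case of free $\cJ$-spaces via cell induction, and then to compute both sides explicitly.

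By symmetry of the monoidal structure map, we may assume that $X$ is flat. Consider both sides as functors of $Y$ from $\cS^{\cJ}$ to $\cS$. Both preserve colimits: $(-)_{h\cJ}$ is itself a colimit construction, $X \boxtimes -$ is a left adjoint in the closed symmetric monoidal structure on $\cS^{\cJ}$, and Cartesian product with $X_{h\cJ}$ is a left adjoint in the Cartesian closed category $\cS$. Both also preserve $\cJ$-equivalences in $Y$: for the source this is immediate from the definition of $\cJ$-equivalences, while for the target it is the flatness statement \cite[Proposition 8.2]{Sagave-S_diagram} noted in the excerpt, which says that $X \boxtimes -$ preserves $\cJ$-equivalences between all $\cJ$-spaces.

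Every $\cJ$-space can be built as a transfinite composition of pushouts of cells of the form $\cJ((\bld{n_1}, \bld{n_2}), -) \times \partial \Delta^k \to \cJ((\bld{n_1}, \bld{n_2}), -) \times \Delta^k$ starting from the initial $\cJ$-space. Combining this with the preservation properties just established reduces the claim to the case where $Y = \cJ((\bld{n_1}, \bld{n_2}), -)$ is a free $\cJ$-space. Since free $\cJ$-spaces are flat, the same reduction applied in the $X$ variable further reduces the claim to the case where $X = \cJ((\bld{m_1}, \bld{m_2}), -)$ is free as well. In this terminal case, $X \boxtimes Y$ is again a free $\cJ$-space, namely $\cJ((\bld{m_1} \concat \bld{n_1}, \bld{m_2} \concat \bld{n_2}), -)$. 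As the excerpt records, the homotopy colimit of any free $\cJ$-space is contractible, so $X_{h\cJ}$, $Y_{h\cJ}$, and $(X \boxtimes Y)_{h\cJ}$ are all contractible, and the monoidal structure map between them is automatically a weak equivalence.

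The main obstacle is making the cell-induction step fully rigorous: one must verify that the monoidal structure map is compatible with each cell-attachment, so that one is genuinely reducing the statement about this specific natural transformation rather than merely showing source and target are abstractly equivalent at the generators. This is not conceptually deep—it follows from the naturality of the Bousfield--Kan construction and the compatibility of $\boxtimes$ with colimits in each variable—but it does require careful bookkeeping. As an alternative that bypasses the inductive reduction, one can appeal to the Fubini theorem for homotopy colimits together with the fact that, under the flatness hypothesis, the pointwise left Kan extension along $\concat \colon \cJ \times \cJ \to \cJ$ defining $X \boxtimes Y$ is a homotopy left Kan extension, giving directly $(X \boxtimes Y)_{h\cJ} \simeq \hocolim_{\cJ \times \cJ}(X \times Y) \simeq X_{h\cJ} \times Y_{h\cJ}$.
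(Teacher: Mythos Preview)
The paper gives no self-contained argument here; it simply cites the $\cI$-space analogue \cite[Lemma~2.25]{Sagave-S_group-compl} and asserts that the same proof carries over. Your proposal is sound in substance, and the cell-induction route you outline can be completed.

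Two points deserve tightening. First, not every $\cJ$-space is literally a cell complex built from the generating cofibrations; only the projectively cofibrant ones are. Since you have already recorded that both sides preserve $\cJ$-equivalences in $Y$, you may pass to a cofibrant replacement of $Y$ before inducting, but this step should be stated. The same remark applies when you later vary $X$: flat does not imply projectively cofibrant, so once $Y$ has been reduced to a free (hence flat) object you must also cofibrantly replace $X$ before decomposing it into cells.

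Second, the bookkeeping you flag hinges on one concrete observation beyond naturality and preservation of strict colimits: the induction step needs both functors to send cell attachments to \emph{homotopy} pushouts. This holds because $X \boxtimes (F_d \times K) \cong (X \boxtimes F_d) \times K$ for any simplicial set $K$ and any free $\cJ$-space $F_d = \cJ(d,-)$, so $X \boxtimes -$ takes generating projective cofibrations to level cofibrations, and $(-)_{h\cJ}$ takes level cofibrations to cofibrations of simplicial sets. Your alternative Fubini route is also viable, but the assertion that flatness makes the pointwise Kan extension along $\concat$ a homotopy Kan extension is itself the substantive input---it is not a formality, and is essentially what the cited reference proves in the $\cI$-space setting.
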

\begin{proof}
The same proof as in the case of $\cI$-spaces~\cite[Lemma
2.25]{Sagave-S_group-compl} applies.
\end{proof}
\subsection{Group completions of commutative
  \texorpdfstring{$\cJ$}{J}-space monoids}\label{sec:CSJ-group-compl}
We need a group completion functor for commutative $\cJ$-space monoids
in order to study the graded log structures we will define using
$\cC\cS^{\cJ}$.

In the case of $\cI$-spaces, it is easy to construct group completions
because the usual bar construction for simplicial or topological
monoids lifts to commutative $\cI$-space monoids~\cite[\S
4]{Sagave-S_group-compl}.  In $\cJ$-spaces, the situation is
different: The monoidal unit $U^{\cJ}$ is concentrated in $\cJ$-space
degree $0$, i.e., $U^{\cJ}(\bld{m_1},\bld{m_2}) = \emptyset$ unless
$m_2-m_1 =0$. So $U^{\cJ}$ is not a zero object in $\cC\cS^{\cJ}$
because it is not terminal, and there is no two sided bar construction
$B^{\boxtimes}(U^{\cJ},M,U^{\cJ})$ for general $M$.

We show in~\cite{Sagave_spectra-of-units} how one can overcome this
difficulty by constructing group completions via a localization of the
positive $\cJ$-model structure on $\cC\cS^{\cJ}$. To explain some of
the details, we recall from ~\cite[\S 3]{Sagave_spectra-of-units} that
there is a functor
\begin{equation}\label{eq:functor-gamma} \gamma \colon \cC\cS^{\cJ} \to \GammaS
\end{equation}
from $\cC\cS^{\cJ}$ into Segal's category of $\Gamma$-spaces. It
satisfies $\gamma(M)(1^+) = M_{h\cJ}$ and takes values in
\emph{special} $\Gamma$-spaces. Applying it for example to the
terminal commutative $\cJ$-space monoid $*$ defines a $\Gamma$-space
$\bof{\cJ}=\gamma(*)$ that provides an infinite delooping of the
classifying space $B\cJ=(*)_{h\cJ}$. We say that a commutative
$\cJ$-space monoid $M$ is \emph{grouplike} if the commutative monoid
$\pi_0(M_{h\cJ})$ is a group. So $M$ is grouplike if and only if
$\gamma(M)$ is \emph{very special}.
\begin{theorem}\cite[Theorem 5.5]{Sagave_spectra-of-units}
  The category $\cC\cS^{\cJ}$ admits a \emph{group completion model
    structure}. The cofibrations are those of the positive $\cJ$-model
  structure. A map $M \to N$ is a weak equivalence if
  $\gamma(M)\to\gamma(N)$ is a stable equivalence of
  $\Gamma$-spaces. The fibrant objects are the positive $\cJ$-fibrant
  objects that are grouplike.
\end{theorem}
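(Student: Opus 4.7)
The approach is to present this as a left Bousfield localization of the positive $\cJ$-model structure on $\cC\cS^{\cJ}$ at an appropriate set of maps. The first task is to verify the hypotheses that make such a localization available: by inspection of the generating (acyclic) cofibrations, the positive $\cJ$-model structure is combinatorial, and its left properness is recorded earlier in the excerpt. Hence the general existence theorem for left Bousfield localizations at a set of maps produces the desired model structure, with the cofibrations automatically unchanged, as soon as we specify a set $S$ of maps to be inverted.

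The natural choice for $S$ is a set of maps forcing each potential $\pi_0$-class to become invertible. Concretely, for each object $(\bld{m_1},\bld{m_2})$ of $\cJ$ with $m_1\geq 1$ one takes the map from the free commutative $\cJ$-space monoid on a single generator in bidegree $(\bld{m_1},\bld{m_2})$ to a chosen derived group completion of it; equivalently, one can use explicit shear-type maps whose inversion is well-known to be equivalent to group completion of commutative monoids. The point is that $\gamma$ sends each such map to a group completion of special $\Gamma$-spaces, so that $S$-locality on $\cC\cS^{\cJ}$ corresponds under $\gamma$ to very-specialness in $\GammaS$.

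With $S$ fixed, verifying the stated descriptions of fibrant objects and weak equivalences proceeds in parallel. An $S$-local positive $\cJ$-fibrant object $M$ is, by definition, one for which the derived mapping spaces out of the free object and its group completion agree; using the free-forgetful adjunction and the identification $\gamma(M)(1^+)\simeq M_{h\cJ}$ provided earlier, this is equivalent to every class in $\pi_0(M_{h\cJ})$ being invertible, i.e.\ to $M$ being grouplike. A general map $M\to N$ is then an $S$-local equivalence iff it induces a weak equivalence on derived mapping spaces into every grouplike positive $\cJ$-fibrant target, and one checks that this happens precisely when $\gamma(M)\to\gamma(N)$ is a stable equivalence of $\Gamma$-spaces, since both conditions amount to equality of group completions.

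The hard part will be making the correspondence between $S$-localization in $\cC\cS^{\cJ}$ and group completion on $\gamma$ precise. This requires understanding how $\gamma$ behaves on free commutative $\cJ$-space monoids, and comparing a bar-type construction attempted inside $\cC\cS^{\cJ}$ (which is obstructed precisely because $U^{\cJ}$ is not terminal, so the two-sided bar construction $B^{\boxtimes}(U^{\cJ},M,U^{\cJ})$ does not exist for general $M$) with the standard group completion after passage to $\GammaS$. It is exactly at this point that the formalism of $\gamma$ earns its keep: even though the internal group completion is subtle, the image under $\gamma$ lands in a setting where group completion is a standard operation on very special $\Gamma$-spaces, and the Bousfield localization at $S$ realizes this externally-defined group completion intrinsically inside $\cC\cS^{\cJ}$.
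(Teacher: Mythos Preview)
This theorem is not proved in the paper at all: it is quoted verbatim as \cite[Theorem 5.5]{Sagave\_spectra-of-units}, and the paper immediately moves on to use it. There is therefore no ``paper's own proof'' to compare your proposal against.

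That said, your outline is the expected one and is essentially what the cited paper does: construct the group completion model structure as a left Bousfield localization of the positive $\cJ$-model structure. A few points in your sketch are thinner than they appear. First, the identification of $S$-local equivalences with maps inducing stable equivalences on $\gamma$ is not a formality: you need to know that $\gamma$ reflects weak equivalences between grouplike positive $\cJ$-fibrant objects (which reduces to knowing that $\cJ$-equivalences between such objects are detected by $(-)_{h\cJ}$, together with specialness of $\gamma$), and separately that any map becoming a stable equivalence on $\gamma$ is inverted by mapping into grouplike targets. The latter requires either an explicit comparison of derived mapping spaces in $\cC\cS^{\cJ}$ with those in $\Gamma$-spaces over $\bof{\cJ}$, or a chain of Quillen equivalences as in \cite{Sagave\_spectra-of-units}; your last paragraph gestures at this but does not supply it. Second, your proposed localizing set $S$ is slightly circular: you invert maps to ``a chosen derived group completion,'' but the group completion is what you are trying to build. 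In practice one uses shear maps or, as in the cited paper, maps coming from an auxiliary model for group completion already available in $\Gamma$-spaces.
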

In particular, a fibrant replacement functor for this model structure
defines a functorial group completion 
$M\rightarrowtail M^{\gp}$ for commutative $\cJ$-space monoids. Weak
equivalences between grouplike objects are $\cJ$-equivalences.

We recall an example for a group completion that will become relevant
later:
\begin{example}\cite[Example
5.8]{Sagave_spectra-of-units}\label{ex:group-completion-of-free}
Let \begin{equation} \label{eq:free-com-J} M =
  \textstyle\coprod_{n\geq 0}
  (\cJ((\bld{m_1},\bld{m_2}),-))^{\boxtimes n}/\Sigma_n \end{equation}
be the free commutative $\cJ$-space monoid on a point in degree
$(\bld{m_1},\bld{m_2})$. We set $m=m_2-m_1$ and assume $m\neq 0$ and
$m_1 > 0$. Then $M$ is concentrated in $\cJ$-space degrees $mk$ for
$k\in\mN_0$, i.e., $M(\bld{n_1},\bld{n_2})=\emptyset$ unless $n_2-n_1$
is of the form $mk$. The same argument as in~\cite[Example
3.7]{Sagave-S_group-compl} shows that $M_{h\cJ} \simeq
\textstyle\coprod_{n\geq 0}B\Sigma_n$, and it follows from the
Barratt-Priddy-Quillen theorem that the group completion $(M)_{h\cJ}
\to (M^{\gp})_{h\cJ}$ is homotopic to $\textstyle\coprod_{n\geq
  0}B\Sigma_n \to QS^0$.

The augmentation $M_{h\cJ} \to (*)_{h\cJ}\iso B\cJ$ associated with
$M$ maps the generator $\id_{(\bld{m_1},\bld{m_2})}$ to the component
of $m \in \pi_0(B\cJ)\iso \mZ$. It follows that the augmentation
$ QS^0 \simeq (M^{\gp})_{h\cJ} \to B\cJ\simeq QS^0$ is multiplication
with $m$.
\end{example}

\section{Symmetric ring spectra with logarithmic
  structures}\label{sec:graded-log-ring-spectra}
In the first part of this section we introduce the basic definitions,
examples, and constructions for the theory of graded topological
logarithmic structures. In many cases these are the straightforward
modifications of the corresponding notions for topological logarithmic
structures studied by Rognes~\cite[\S 7]{Rognes_TLS}. The difference
is that we replace the commutative $\cI$-space monoids (modeling
$E_{\infty}$ spaces) used in Rognes' $\cI$-space log structures theory
by commutative $\cJ$-space monoids (modeling \emph{graded}
$E_{\infty}$ spaces).  Both Rognes' and our notions may be viewed as
homotopical generalizations of the affine version of the corresponding
concepts in logarithmic geometry as for example described by
Kato~\cite[\S 1]{Kato_logarithmic-structures} and
Ogus~\cite{Ogus_log-book}.

In the second part of this section, we give a first result in which
our graded log structures show a desirable behavior which is not
shared by $\cI$-space pre-log structures: We prove that the
localization of the free graded pre-log structure on a ring spectrum
inverts the homotopy class of its generator in the homotopy groups of
the ring spectrum.

\subsection{Pre-log structures, log structures, and logification}
Throughout this section, we let $A$ be a positive fibrant commutative
symmetric ring spectrum.  Using the adjunction
$(\mS^{\cJ}[-],\Omega^{\cJ})$ relating commutative $\cJ$-space monoids
and commutative symmetric ring spectra, we can state
\begin{definition}\label{def:graded-pre-log}
  A \emph{graded pre-log structure} $(M,\alpha)$ on $A$ is a
  commutative $\cJ$-space monoid $M$ together with a map $\alpha\colon
  M \to \Omega^{\cJ}(A)$ in $\cC\cS^{\cJ}$. If $(M,\alpha)$ is a
  graded pre-log structure on $A$, the triple $(A,M,\alpha)$ is a
  \emph{graded pre-log ring spectrum}. We write $(A,M)$ for $(A,M,\alpha)$ if
  $\alpha$ is understood from the context.

  A map $(f,f^{\flat})\colon (A,M,\alpha) \to (B,N,\beta)$ of graded
  pre-log ring spectra consists of a map $f\colon A \to B$ of
  commutative symmetric ring spectra and a map $f^{\flat}\colon M \to N$ of
  commutative $\cJ$-space monoids such that $\beta f^{\flat} =
  (\Omega^{\cJ}(f))\alpha$. We call $(f,f^{\flat})$ a \emph{weak
    equivalence} if $f$ is a stable equivalence of symmetric spectra
  and $f^{\flat}$ is a $\cJ$-equivalence.
\end{definition}
\begin{remark}
  We explained in the introduction that the notion of pre-log and log
  structures defined here differs from Rognes' notion
  in~\cite{Rognes_TLS}. We therefore use the attribute ``$\cI$-space'' for the
  various notions of pre-log and log structures considered
  in~\cite{Rognes_TLS} to distinguish them from their graded
  counterparts studied in the present paper. When there is no risk of
  confusion with Rognes' definition, we occasionally drop the
  ``graded'' in our notion to ease notation.
\end{remark}

\begin{example}\label{ex:free-pre-log}
  Let $x \colon S^{n_2} \to A_{n_1}$ be a basepoint preserving map. It
  represents a homotopy class $[x] \in \pi_{n_2-n_1}(A)$, and since
  $A$ is positive fibrant every homotopy class in $\pi_n(A)$ can be
  represented by such a map if $n_1\geq 1$ and $n=n_2-n_1$. Since
  $\Omega^{\cJ}(A)(\bld{n_1},\bld{n_2})=\Omega^{n_2}(A_{n_1})$, we may
  view $x$ as a point in $\Omega^{\cJ}(A)(\bld{n_1},\bld{n_2})$.
  
  By adjunction, $x$ induces a map
  \begin{equation}\label{eq:free-pre-log} \alpha \colon C(x) = \textstyle\coprod_{i\geq 0}
    (\cJ((\bld{n_1},\bld{n_2}),-))^{\boxtimes i}/\Sigma_i  \to
    \Omega^{\cJ}(A) \end{equation} 
  from the free commutative $\cJ$-space monoid on a point in degree
  $(\bld{n_1},\bld{n_2})$ to $\Omega^{\cJ}(A)$. This map defines the
  \emph{free graded pre-log structure} on $A$.  We write $(A,C(x))$ for
  the resulting graded pre-log ring spectrum.
\end{example}

\begin{example}
  If $M$ is a commutative $\cJ$-space monoid, the adjunction unit of
  $(\mS^{\cJ}[-],\Omega^{\cJ})$ induces the \emph{canonical graded
    pre-log structure} $M \to \Omega^{\cJ}(\mS^{\cJ}[M]^{\text{fib}})$
  on a fibrant replacement $\mS^{\cJ}[M]^{\text{fib}}$ of
  $\mS^{\cJ}[M]$.
\end{example}

\begin{example}\label{ex:direct-image-pre-log}
  Let $(B,N,\beta)$ be a graded pre-log ring spectrum and let $f
  \colon A \to B$ a map of commutative symmetric ring spectra. The
  pullback diagram
\[\xymatrix@-1pc{
f_*N \ar[rr]^-{f_*\beta} \ar[d] & & \Omega^{\cJ}(A) \ar[d] \\
N \ar[rr]^-{\beta} & & \Omega^{\cJ}(B)
}
\]
provides a graded pre-log structure $(f_*N,f_*\beta)$ on
$A$. Following the terminology of~\cite[Definition 7.26]{Rognes_TLS},
we call $(f_*N,f_*\beta)$ the \emph{graded direct image pre-log
  structure} induced by $f$ and $(N,\beta)$. It comes with a canonical
map $(A,f_*N) \to (B,N)$. Because $\cC\cS^{\cJ}$ is right proper,
assuming that $\beta$ is a positive $\cJ$-fibration or that $f$ is a
positive fibration in $\cC\Spsym$ ensures that this preserves weak
equivalences. (The attribute ``direct image'' is taken from the
algebraic notion of log rings where the variance refers to the
associated affine log schemes~\cite{Kato_logarithmic-structures}.)
\end{example}

\begin{definition}
  Let $(M,\alpha)$ be a graded pre-log structure on a positive fibrant
  commutative symmetric ring spectrum $A$ and consider the pullback
  diagram
\begin{equation}\label{eq:pullback-log-definition}
\xymatrix@-1pc{
\alpha^{-1} \GLoneJof(A) \ar[rr]^-{\widetilde{\alpha}} \ar[d]& & \GLoneJof(A) \ar[d]\\
M  \ar[rr]^-{\alpha}&  &\Omega^{\cJ}(A).
}
\end{equation}
Then $(M,\alpha)$ is a \emph{graded log structure} if
$\widetilde{\alpha}$ is a $\cJ$-equivalence. A \emph{graded log ring
  spectrum} $(A,M)$ is a graded pre-log ring spectrum with a graded
log-structure.
\end{definition}
In the definition, our assumption that $A$ is positive fibrant ensures
that $\Omega^{\cJ}(A)$ and $\GLoneJof(A)$ capture the desired homotopy
types. Since $\GLoneJof(A) \to \Omega^{\cJ}(A)$ is an inclusion of
path components, it is a positive
$\cJ$-fibration. Hence~\eqref{eq:pullback-log-definition} is homotopy
cartesian because $\cC\cS^{\cJ}$ is right proper, and condition of
$(A,M)$ being a log ring spectrum is homotopy invariant.

\begin{example}\label{ex:trivial-log}
  The inclusion $\GLoneJof(A)\to\Omega^{\cJ}(A)$ provides the
  \emph{trivial graded log structure} on~$A$.
\end{example}

\begin{example}\label{ex:direct-image-log}
  Let $(B,N)$ be a graded log ring spectrum and let $f \colon A \to B$
  be a map of positive fibrant commutative symmetric ring
  spectra. Forming the base change of $\beta\colon N \to
  \Omega^{\cJ}(B)$ along \[\GLoneJof(A) \to \GLoneJof(B) \to
  \Omega^{\cJ}(B)\quad \text{ and } \quad \GLoneJof(A) \to
  \Omega^{\cJ}(A) \to \Omega^{\cJ}(B)\] shows that that the graded
  direct image pre-log structure $f_*N$ of
  Example~\ref{ex:direct-image-pre-log} is a log structure if $\beta$
  is a positive $\cJ$-fibration or $f$ is a positive fibration in
  $\cC\Spsym$.
\end{example}

\begin{example}
  Combining the last two examples, any map $f \colon A \to B$ of
  positive fibrant objects in $\cC\Spsym$ gives rise to the graded log
  structure $f_*(\GLoneJof(B)) \to \Omega^{\cJ}(A)$ on $A$. We call
  this the \emph{graded direct image log structure} induced by
  $f$. This applies for instance to the map of complex $K$-theory
  spectra $i\colon ku \to KU$ which exhibits the connective $ku$ as
  the connective cover of the periodic $KU$.  We will study the
  properties of this log structure in
  Section~\ref{sec:direct-image-log}. As mentioned in the
  introduction, the $\cI$-space counterpart of this construction only
  provides the trivial log structure since both
  $\Omega^{\cI}(ku)\to\Omega^{\cI}(KU)$ and $\GLoneIof(ku) \to
  \GLoneIof(KU)$ are $\cI$-equivalences.

\end{example}
There is a logification process turning pre-log structures into log structures: 
\begin{construction}\label{cons:construction-logification}
If 
$(M,\alpha)$ is a graded pre-log structure on $A$, we choose a (functorial) factorization
\begin{equation}\label{eq:construction-logification}
\xymatrix@1{\alpha^{-1} \GLoneJof(A)\;\; \ar@{>->}[r] & \;G\; \ar@{->>}[r]^-{\sim} & \GLoneJof(A)}
\end{equation}
of $\widetilde{\alpha}$ and define $M^{a}$ to be the pushout in $\cC\cS^{\cJ}$ 
displayed on the left hand side of 
\[
\xymatrix@-1pc{
\alpha^{-1}\GLoneJof(A)\;\; \ar@{>->}[r] \ar[d]&   \;G\;\ar@{->>}[rr]^-{\sim} \ar[d]& &\GLoneJof(A) \ar[d] \\
M \ar[r] \ar@/_/@<-.5ex>[rrr] & M^a \ar@{-->}[rr]^{\alpha^{a}} & & \Omega^{\cJ}(A)
}
\]
Hence $M^{a}=M\boxtimes_{\alpha^{-1} \GLoneJof(A)}G$, and the
universal property of the pushout provides the map $\alpha^{a}\colon
M^a \to \Omega^{\cJ}(A)$. This construction preserves weak
equivalences: Left properness of $\cC\cS^{\cJ}$ ensures that the
pushout coincides with the homotopy pushout.  

The pre-log structure $(M^a,\alpha^{a})$ is the
\emph{associated graded log structure} of $(A,M)$, and the canonical
map $(M,\alpha) \to (M^{a},\alpha^a)$ is the \emph{logification} of
$(M,\alpha)$.
\end{construction}
This terminology is justified by the following
\begin{lemma}\label{lem:logification}
  The associated graded log structure $(M^a,\alpha)$ is a 
  log-structure. If $(A,M)$ is a graded log ring spectrum, then the
  logification is a weak equivalence.
\end{lemma}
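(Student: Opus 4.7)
Write $P = \alpha^{-1}\GLoneJof(A)$ and $U = \GLoneJof(A)$, so that the factorization has the form $P \rightarrowtail G \xrightarrow{\sim} U$ and $M^a = M \boxtimes_P G$.

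The second assertion is the easier one. If $(A, M, \alpha)$ is already a graded log ring spectrum, then $\widetilde{\alpha}\colon P \to U$ is a $\cJ$-equivalence, so two-out-of-three applied to the factorization $P \rightarrowtail G \xrightarrow{\sim} U$ forces the cofibration $P \to G$ to be an acyclic cofibration in the positive $\cJ$-model structure on $\cC\cS^{\cJ}$. Its pushout $M \to M \boxtimes_P G = M^a$ is then again an acyclic cofibration, and in particular a $\cJ$-equivalence.

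For the first assertion, the pushout provides a canonical map $G \to M^a$ whose composite with $\alpha^a$ agrees by construction with $G \xrightarrow{\sim} U \hookrightarrow \Omega^{\cJ}(A)$; thus $G \to M^a$ factors uniquely through the pullback $(\alpha^a)^{-1}U$, and the composite $G \to (\alpha^a)^{-1}U \to U$ is the acyclic fibration $G \xrightarrow{\sim} U$. By two-out-of-three, it suffices to show that $G \to (\alpha^a)^{-1}U$ is a $\cJ$-equivalence. I would in fact identify this map with an isomorphism, via the following calculation.

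Since $U \hookrightarrow \Omega^{\cJ}(A)$ is an inclusion of path components at each object of $\cJ$, pulling back along it is given levelwise by restricting to path components, and in particular preserves colimits in $\cS^{\cJ}$ (being a left adjoint in the locally cartesian closed setting). The pushout $M^a$ is the reflexive coequalizer of $M \boxtimes P \boxtimes G \rightrightarrows M \boxtimes G$ in $\cC\cS^{\cJ}$, and since the forgetful functor $\cC\cS^{\cJ} \to \cS^{\cJ}$ preserves reflexive coequalizers this is also a pointwise coequalizer in $\cS^{\cJ}$. The pullback of $M \boxtimes G \to \Omega^{\cJ}(A)$ along $U$ equals $P \boxtimes G$: a pair $(m, g)$ has image $\alpha(m) \cdot g$ lying in $U$ iff $\alpha(m) \in U$, since $g$ itself is a unit (as $G$ maps into $U$) and in a commutative monoid $ab$ is a unit iff both $a$ and $b$ are. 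The analogous identification holds for $M \boxtimes P \boxtimes G$, and taking the coequalizer of $P \boxtimes P \boxtimes G \rightrightarrows P \boxtimes G$ yields $(\alpha^a)^{-1}U = P \boxtimes_P G = G$, as needed. The sole technical point is this pullback-pushout compatibility, cleanly handled by the path-component interpretation of the pullback along $U \hookrightarrow \Omega^{\cJ}(A)$.
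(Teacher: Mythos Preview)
Your argument is correct and rests on the same observation as the paper's: the key fact that in a commutative monoid a product is a unit precisely when both factors are, applied to the path-component decomposition $M = P \amalg \widehat{M}$ over $\Omega^{\cJ}(A)$. The packaging differs slightly. The paper computes $M^a$ directly as
\[
M^a \;=\; (P \amalg \widehat{M}) \boxtimes_P G \;\cong\; G \amalg (\widehat{M} \boxtimes_P G),
\]
and then reads off $\widetilde{M^a} \cong G$ because the first summand lands in $\GLoneJof(A)$ and the second in its complement. You instead compute $(\alpha^a)^{-1}U$ by interchanging the pullback along $U \hookrightarrow \Omega^{\cJ}(A)$ with the reflexive coequalizer defining $M^a$, which is a clean abstract manoeuvre.

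One point to tighten: your sentence ``a pair $(m,g)$ has image $\alpha(m)\cdot g$ lying in $U$ iff $\alpha(m)\in U$'' treats elements of $M \boxtimes G$ as pairs, which they are not (they are colimit classes). What makes the identification of the pullback of $M \boxtimes G$ with $P \boxtimes G$ rigorous is precisely the decomposition $M = P \amalg \widehat{M}$ together with the fact that $-\boxtimes G$ preserves coproducts, giving $M \boxtimes G = (P \boxtimes G) \amalg (\widehat{M} \boxtimes G)$ with the two summands mapping to $U$ and its complement respectively. This is exactly how the paper argues, so your proof and the paper's are the same argument viewed from two angles.
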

\begin{proof}
  For brevity we write $W=\Omega^{\cJ}(A)$. Let $W^{\times} =
  \GLoneJof(A)$ be union of invertible path components, and let
  $\widehat{W}$ be the complement of $W^{\times}$, i.e., the
  $\cJ$-space given by the components of $W$ which are not invertible.
  For a map $N \to W$ in $\cC\cS^{\cJ}$, we let $N = \widetilde{N}
  \coprod \widehat{N}$ be the decomposition of the underlying
  $\cJ$-space of $N$ into the part $\widetilde{N} = N\times_W
  W^{\times}$ that maps to the units and the part $\widehat{N} =
  N\times_W \widehat{W}$ that maps to the non-units. Then
  $\alpha^{-1}\GLoneJof(A) = \widetilde{M}$, and there are
  isomorphisms
  \[ M^a = M \boxtimes_{\widetilde{M}} G \iso (\widetilde{M}
  \textstyle\coprod \widehat{M})\boxtimes_{\widetilde{M}} G \iso G
  \textstyle\coprod (\widehat{M}\boxtimes_{\widetilde{M}}G).\] Since
  $G$ maps to the units and $\widehat{M}\boxtimes_{\widetilde{M}}G$
  maps to the non-units, this shows $\widetilde{M^a} \iso G$. So
  $(M^a,\alpha^a)$ is a log structure.

The second assertion is clear because the cofibration
$\alpha^{-1}\GLoneJof(A) \rightarrowtail G$ is a $\cJ$-equivalence if $(M,A)$ is a
log ring spectrum.
\end{proof}
The proof of the previous lemma also shows
\begin{lemma}
Let $(A,M)$ be graded pre-log structure. If $M \to \Omega^{\cJ}(A)$ factors
through $\GLoneJof(A) \to \Omega^{\cJ}(A)$, then $(A,M^a)$ is weakly equivalent
to the trivial graded log structure. 
\end{lemma}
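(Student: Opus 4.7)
The plan is to run the logification Construction~\ref{cons:construction-logification} and observe that the hypothesis makes the pushout defining $M^a$ collapse onto the chosen cofibrant replacement of $\GLoneJof(A)$.

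First I would unpack the hypothesis. If $\alpha\colon M \to \Omega^{\cJ}(A)$ factors through $\GLoneJof(A) \hookrightarrow \Omega^{\cJ}(A)$, then in the pullback square~\eqref{eq:pullback-log-definition} defining $\alpha^{-1}\GLoneJof(A)$ the left vertical map is an isomorphism, so $\alpha^{-1}\GLoneJof(A) \iso M$ and $\widetilde{\alpha}$ coincides with the factored map $M \to \GLoneJof(A)$. Equivalently, in the notation of the proof of Lemma~\ref{lem:logification}, the non-unit part $\widehat{M}$ is empty and $\widetilde{M} = M$.

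Next I would apply Construction~\ref{cons:construction-logification}. The chosen factorization $\alpha^{-1}\GLoneJof(A) \rightarrowtail G \xrightarrow{\sim} \GLoneJof(A)$ of $\widetilde{\alpha}$ becomes a factorization of the (factored) map $\alpha$ itself, and the pushout defining $M^a$ specializes to
\[
M^a \;=\; M \boxtimes_{\alpha^{-1}\GLoneJof(A)} G \;=\; M \boxtimes_M G \;\iso\; G,
\]
with $\alpha^a\colon M^a \to \Omega^{\cJ}(A)$ the composite $G \xrightarrow{\sim} \GLoneJof(A) \hookrightarrow \Omega^{\cJ}(A)$.

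Finally I would produce the required weak equivalence of graded pre-log ring spectra $(A, M^a, \alpha^a) \to (A, \GLoneJof(A), \hookrightarrow)$: take the identity of $A$ on the spectrum level and the acyclic fibration $G \xrightarrow{\sim} \GLoneJof(A)$ on the $\cJ$-space level; commutativity with the structure maps is immediate from the definition of $\alpha^a$. By Definition~\ref{def:graded-pre-log} this is a weak equivalence, so $(A, M^a)$ is weakly equivalent to the trivial graded log structure of Example~\ref{ex:trivial-log}. There is no real obstacle in the argument; the only substantive point is recognizing that the factorization hypothesis forces $\widehat{M} = \emptyset$ so that the logification pushout reduces to $G$.
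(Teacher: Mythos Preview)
Your proof is correct and follows essentially the same approach as the paper, which simply notes that the proof of Lemma~\ref{lem:logification} also shows this: the factorization hypothesis forces $\widehat{M}=\emptyset$ (equivalently $\alpha^{-1}\GLoneJof(A)\iso M$), so the displayed decomposition $M^a \iso G \coprod (\widehat{M}\boxtimes_{\widetilde{M}}G)$ from that proof collapses to $M^a \iso G \xrightarrow{\sim} \GLoneJof(A)$. Your write-up just spells this out more explicitly.
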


\subsection{Log structures and localization}
In this paragraph we consider a positive fibrant commutative symmetric
ring spectrum $A$ with a graded pre-log structure $(M,\alpha)$.

As discussed in Section~\ref{sec:CSJ-group-compl}, we can form the group completion
$M \rightarrowtail M^{\gp}$ of~$M$. Combining this map with $\alpha \colon M \to \Omega^{\cJ}(A)$ and
using the left adjoint $\mS^{\cJ}[-]$ of $\Omega^{\cJ}$, we obtain the
following diagram in $\cC\Spsym$:
\begin{equation}\label{eq:pushout-for-localization}
\xymatrix@-1pc{ \mS^{\cJ}[M^{\gp}] & \;\mS^{\cJ}[M] \ar@{>->}[l] \ar[r] & A }
\end{equation}
\begin{definition}
  The pushout $A[M^{-1}]=\mS^{\cJ}[M^{\gp}]\sm_{\mS^{\cJ}[M]}A$ of the
  diagram~\eqref{eq:pushout-for-localization} in the category of
  commutative symmetric ring spectra is the \emph{localization} of the
  graded pre-log ring spectrum $(A,M)$.
\end{definition}
Since the group completion is defined as a functorial fibrant
replacement, this is functorial. If $M$ is cofibrant, the fact that $M
\to M^{\gp}$ is a cofibration ensures that the localization sends weak
equivalences of pre-log ring spectra to stable equivalences. To ensure
this desirable property, we implicitly form a cofibrant replacement of
$M$ whenever we consider $A[M^{-1}]$ for a non-cofibrant $M$.

The localization of $(A,M)$ does only depend on its logification:
\begin{lemma}\label{lem:localization-and-logification} 
  If $M$ is cofibrant, then the logification $(A,M) \to (A,M^{a})$ induces
  a stable equivalence $A[M^{-1}] \to A[(M^a)^{-1}]$.
\end{lemma}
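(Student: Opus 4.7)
The plan is to exploit the universal property of localization: for any commutative symmetric ring spectrum $R$ equipped with a structure map from $A$, I would show that maps $A[M^{-1}] \to R$ and $A[(M^a)^{-1}] \to R$ in the homotopy category under $A$ correspond to the same condition on $A \to R$, so the natural comparison is an equivalence by Yoneda. Unwinding the pushout definition $A[M^{-1}] = \mS^{\cJ}[M^{\gp}] \wedge_{\mS^{\cJ}[M]} A$ via the adjunction $(\mS^{\cJ}[-], \Omega^{\cJ})$ and the universal property of group completion established in~\cite{Sagave_spectra-of-units}, maps $A[M^{-1}] \to R$ under $A$ correspond to maps $A \to R$ for which the composite $M \to \Omega^{\cJ}(A) \to \Omega^{\cJ}(R)$ factors through $\GLoneJof(R)$. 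The analogous description holds for $A[(M^a)^{-1}]$ with $M^a$ in place of $M$, using that $(M^a)^{\gp}$ is the group completion of $M^a$.

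Next, I would verify that these two conditions select the same maps $A \to R$. The implication from $M^a$ to $M$ is immediate via the canonical map $M \to M^a$. For the converse, I would use the pushout presentation $M^a = M \boxtimes_{\widetilde M} G$ from Construction~\ref{cons:construction-logification}: specifying a map $M^a \to \GLoneJof(R)$ amounts to giving compatible maps $M \to \GLoneJof(R)$ (which is the assumption) and $G \to \GLoneJof(R)$. The latter is automatic, since the composite $G \to \GLoneJof(A) \to \GLoneJof(R)$ (induced by $A \to R$) provides it, and by the construction of the logification this agrees with the restriction along the cofibration $\widetilde M \rightarrowtail G$. Thus the two representable functors agree, and Yoneda gives the stable equivalence.

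The main obstacle is ensuring that the pushout $M^a = M \boxtimes_{\widetilde M} G$ computes the homotopy pushout after passing through $\mS^{\cJ}[-]$ and $\wedge_{\mS^{\cJ}[M]} A$, and that the universal property of group completion translates correctly through the adjunction to yield the claimed universal property of $A[M^{-1}]$. Cofibrancy of $M$ (hence flatness), the cofibration $\widetilde M \rightarrowtail G$ built into the logification, and left properness of both $\cC\cS^{\cJ}$ and $\cC\Spsym$ should suffice; one may need to implicitly replace $M$ and $M^a$ cofibrantly to guarantee that the pushouts defining the localizations are genuinely derived. Once these technical conditions are in place, the identification of universal properties is the clean reason behind the claimed equivalence.
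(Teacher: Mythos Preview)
Your approach is viable but genuinely different from the paper's. The paper argues directly at the level of commutative $\cJ$-space monoids: writing $Q$ for the pushout of $M^{\gp} \leftarrow M \to M^a$, it reduces the claim to showing that the canonical map $Q \to (M^a)^{\gp}$ is a $\cJ$-equivalence. This follows because $M^a \to Q$ is the cobase change of the acyclic cofibration $M \to M^{\gp}$ in the group completion model structure, so $Q \to (M^a)^{\gp}$ is a group-completion weak equivalence by two-out-of-three; one then checks that $Q$ is already grouplike by exhibiting a surjection onto $\pi_0(Q_{h\cJ})$ from the group $\pi_0((M^{\gp})_{h\cJ}) \times \pi_0(G_{h\cJ})$. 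Everything follows by applying $\mS^{\cJ}[-]$ and pushing out to $A$.

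Your Yoneda argument is more conceptual, but the phrase ``maps $A[M^{-1}] \to R$ under $A$ correspond to maps $A \to R$ for which \dots'' hides the step that does the real work. What you actually need is that $[A[M^{-1}], R]$ in $\Ho(A/\cC\Spsym)$ is a \emph{singleton} whenever the composite $M \to \Omega^{\cJ}(R)$ lands in $\GLoneJof(R)$, and empty otherwise. Existence comes from the lifting axiom in the group completion model structure; uniqueness requires the homotopy uniqueness of lifts (as in~\cite[Proposition~7.6.13]{Hirschhorn_model}) applied to the acyclic cofibration $M \rightarrowtail M^{\gp}$ against the fibrant grouplike target $\GLoneJof(R)$, together with the observation that homotopy classes of maps into a local object are unchanged by the Bousfield localization. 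This is exactly the argument the paper deploys later in the proof of Proposition~\ref{prop:localization-of-free}, so it is available, but you should spell it out rather than gesture at a universal property. Once you have the singleton statement for both $M$ and $M^a$, your pushout argument showing the two conditions coincide is correct, and Yoneda finishes it. The paper's direct route avoids this representability detour at the cost of a small $\pi_0$ computation; your route trades that computation for the lifting-uniqueness argument.
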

\begin{proof}
By definition, the right hand square in
\[\xymatrix@-1pc{
  \mS^{\cJ}[M] \ar[d] \ar[r] & \mS^{\cJ}[M^a] \ar[d] \ar[r]& \ar[d]A\\
  \mS^{\cJ}[M^{\gp}] \ar[r]& \mS^{\cJ}[(M^a)^{\gp}]
  \ar[r]& A[(M^a)^{-1}] }\] is a pushout. Let $Q$ be the pushout of
$M^{\gp} \ot M \to M^a$.  Then $Q$ and $(M^a)^{\gp}$ are
cofibrant since $M$ is.  Since $\mS^{\cJ}[-]$ sends $\cJ$-equivalences
between cofibrant objects to stable equivalences, the diagram shows that it
is enough to show that the canonical map $Q \to (M^a)^{\gp}$ is
a $\cJ$-equivalence. For this we consider the iterated pushout
\[\xymatrix@-1pc{
  \alpha^{-1}\GLoneJof(A) \ar[d] \ar@{>->}[r]&G \ar[d]\\
  M \ar[d] \ar@{>->}[r]&M^a \ar[d]\\
  M^{\gp}\ar@{>->}[r]& Q. }\] Both $M \to M^{\gp}$ and $M^a \to
(M^a)^{\gp}$ are acyclic cofibrations in the group completion
model structure.  By cobase change this holds for $M^a \to Q$, and two
out of three shows that $Q \to (M^a)^{\gp}$ is a weak equivalence in
the group completion model structure. We need to show that $Q$ is
grouplike in order to see that it is a $\cJ$-equivalence.  By
definition, $Q$ is grouplike if $Q_{h\cJ}$ is. The canonical map
\begin{equation}\label{eq:logification-pushout-grouplike}\pi_0((M^{\gp} \boxtimes G)_{h\cJ}) \to \pi_0((M^{\gp} \boxtimes_{\alpha^{-1}\GLoneJof(A)} G)_{h\cJ}) \iso \pi_0(Q_{h\cJ})
\end{equation}
is clearly surjective. Because $M^{\gp}$ is flat,
Lemma~\ref{lem:monoidal-structure-map-equiv} provides natural
isomorphism $\pi_0((M^{\gp})_{h\cJ}) \times \pi_0(G_{h\cJ}) \iso
\pi_0((M^{\gp} \boxtimes G)_{h\cJ})$. Since $M^{\gp}$ and $G$ are
grouplike, the domain of the
surjection~\eqref{eq:logification-pushout-grouplike} is a group, and
so is $\pi_0(Q_{h\cJ})$.
\end{proof}

We recall from~\cite{Schwede_SymSp} that the process of inverting an
element in a graded ring can be lifted from homotopy groups to symmetric ring
spectra:
\begin{proposition}\label{prop:inverting-element}
Let $x \colon S^{n_2} \to A_{n_1}$ represent a homotopy class $[x] \in \pi_{n_2-n_1}(A)$.
Then $x$ gives rise to a positive fibrant commutative symmetric ring spectrum $A[1/x]$ and
a positive cofibration $i\colon A \to A[1/x]$ in $\cC\Spsym$ 
which induces an isomorphism
$\pi_*(A)[1/[x]] \to \pi_*(A[1/x])$. This construction is natural in $A$.  
\end{proposition}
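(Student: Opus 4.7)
The plan is to construct $A[1/x]$ as the localization of $A$ equipped with the free graded pre-log structure generated by $x$. Specifically, $x\colon S^{n_2} \to A_{n_1}$ corresponds via adjunction to a point of $\Omega^{\cJ}(A)(\bld{n_1},\bld{n_2})$, and hence to a map $\alpha\colon C(x) \to \Omega^{\cJ}(A)$ in $\cC\cS^{\cJ}$ using the free commutative $\cJ$-space monoid of Example~\ref{ex:free-pre-log}. After cofibrant replacement of $C(x)$, the adjoint yields a map $\mS^{\cJ}[C(x)] \to A$ in $\cC\Spsym$, and I define $A[1/x]$ to be a positive fibrant replacement of the pushout
$$A \sm_{\mS^{\cJ}[C(x)]} \mS^{\cJ}[C(x)^{\gp}].$$
The group completion $C(x) \rightarrowtail C(x)^{\gp}$ is a cofibration in $\cC\cS^{\cJ}$, so its image under the left Quillen functor $\mS^{\cJ}[-]$ is a positive cofibration in $\cC\Spsym$; cobase change then produces the required positive cofibration $i\colon A \to A[1/x]$. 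Naturality in $A$ is built into the construction since every step is functorial.

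For the homotopy group calculation, I argue that the group completion $C(x) \to C(x)^{\gp}$ is realized at the level of commutative symmetric ring spectra as the sequential homotopy colimit of multiplication by $x$. Using Example~\ref{ex:group-completion-of-free}, which identifies $C(x)_{h\cJ} \simeq \coprod_{n\geq 0} B\Sigma_n$ and $(C(x)^{\gp})_{h\cJ} \simeq QS^0$, one sees that $\pi_*(\mS^{\cJ}[C(x)^{\gp}])$ is obtained from $\pi_*(\mS^{\cJ}[C(x)])$ by inverting the canonical class represented by $x$. Since smashing a cofibration of $\mS^{\cJ}[C(x)]$-modules with $A$ over $\mS^{\cJ}[C(x)]$ preserves filtered homotopy colimits, the pushout is equivalent to the telescope of multiplication by $x$ on $A$, whose homotopy groups compute precisely $\pi_*(A)[1/[x]]$.

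The main obstacle is rigorously identifying $\mS^{\cJ}[C(x)^{\gp}]$ as such a localization of $\mS^{\cJ}[C(x)]$ and ensuring the pushout above computes the derived smash product. The key inputs are the cofibrancy of $\mS^{\cJ}[C(x)] \to \mS^{\cJ}[C(x)^{\gp}]$, the telescope description of the group completion via Example~\ref{ex:group-completion-of-free}, and the fact that $\mS^{\cJ}[-]$ takes $\cJ$-equivalences between cofibrant objects to stable equivalences. This is essentially the element-inverting localization of commutative symmetric ring spectra from \cite{Schwede_SymSp}, adapted to the graded $\cJ$-space framework used throughout the paper.
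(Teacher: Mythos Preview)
Your approach is genuinely different from the paper's, and it inverts the paper's logical flow. The paper's proof is essentially a two-line citation: it takes the explicit mapping-telescope construction of $A\to A[1/x]$ from \cite[I.~Corollary~4.69]{Schwede_SymSp}, which already has the correct homotopy groups by elementary means, and then factors it as a positive cofibration followed by an acyclic fibration to obtain a fibrant codomain. No $\cJ$-space machinery enters. In the paper, Proposition~\ref{prop:inverting-element} is an \emph{input} to the later Proposition~\ref{prop:localization-of-free}, which proves that $A[C(x)^{-1}]\simeq A[1/x]$. You are proposing to take the conclusion of that later proposition as the \emph{definition} of $A[1/x]$, which forces you to establish the homotopy-group computation directly.

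That is where the gap lies. Your key claim---that the pushout $A\sm_{\mS^{\cJ}[C(x)]}\mS^{\cJ}[C(x)^{\gp}]$ is equivalent to the telescope of multiplication by $x$ on $A$---is asserted but not proved. Example~\ref{ex:group-completion-of-free} only identifies the homotopy type of $(C(x)^{\gp})_{h\cJ}$ as $QS^0$ via Barratt--Priddy--Quillen; it does not exhibit $C(x)^{\gp}$ or $\mS^{\cJ}[C(x)^{\gp}]$ as a sequential colimit along multiplication by the generator, and the group completion in $\cC\cS^{\cJ}$ is defined abstractly as a fibrant replacement in a Bousfield localization, not as a telescope. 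Passing from ``$(C(x)^{\gp})_{h\cJ}\simeq QS^0$'' to ``$\pi_*(\mS^{\cJ}[C(x)^{\gp}])$ is $\pi_*(\mS^{\cJ}[C(x)])$ with $[x]$ inverted'' is a nontrivial step that would itself require something like the Schwede construction you are trying to avoid. So while your route is not unreasonable in spirit, as written it assumes the very telescope identification that the paper's approach gets for free by quoting \cite{Schwede_SymSp}.
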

\begin{proof}
  An explicit representative for $A \to A[1/x]$ is constructed
  in~\cite[I.Corollary 4.69]{Schwede_SymSp}. This map has the desired
  properties apart from being a cofibration with fibrant
  codomain. Forming a fibrant replacement of the codomain and
  factoring the map from $A$ into the fibrant replacement by a
  cofibration followed by an acyclic fibration provides the desired
  cofibration $i \colon A \to A[1/x]$ with fibrant codomain.
\end{proof}
The following universal property of this construction is also discussed
in~\cite{Schwede_SymSp}:
\begin{corollary}\label{cor:universal-property-inverting-element}
Let $A \to B$ be a map into a positive fibrant commutative symmetric ring
spectrum $B$ which sends $[x]$ to a unit in $\pi_*(B)$. Then $A \to B$ extends
over $A \to A[1/x]$. 
\end{corollary}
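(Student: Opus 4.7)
The plan is to leverage the explicit construction of $A \to A[1/x]$ recalled in Proposition~\ref{prop:inverting-element}, which is taken from \cite[I.Corollary 4.69]{Schwede_SymSp}. That construction exhibits $A[1/x]$ as a homotopy colimit obtained by iteratively multiplying by $x$ (or equivalently by a pushout attaching a cell that imposes the relation $xy = 1$ to a free $E_{\infty}$ generator $y$ of degree $-(n_2-n_1)$). Its key feature, which I intend to use, is the following universal property: the associated map $A \to A[1/x]$ is initial in the homotopy category of $A$-algebras in $\cC\Spsym$ among maps that send $[x]$ to a unit on homotopy groups.

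First I would apply this universal property. The given map $A \to B$ sends $[x]$ to a unit in $\pi_*(B)$ by hypothesis, so multiplication by $[x]$ acts as an isomorphism on $\pi_*(B)$. The mapping telescope/pushout presentation of $A[1/x]$ then yields a canonical map $A[1/x] \to B$ in the homotopy category $\Ho(\cC\Spsym)$ that is compatible with $A \to B$. On homotopy groups, this map is precisely the one induced by the universal property of the algebraic localization $\pi_*(A)[1/[x]] \to \pi_*(B)$.

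Second, I would rigidify this homotopical extension to an actual morphism in $\cC\Spsym$. Proposition~\ref{prop:inverting-element} ensures that $A \to A[1/x]$ is a positive cofibration in $\cC\Spsym$ and that $A[1/x]$ is positive fibrant; since $B$ is also assumed positive fibrant, the standard argument from the model structure on $\cC\Spsym$ (for instance, approximating maps in the homotopy category by honest maps out of a cofibrant domain into a fibrant codomain) promotes the factorization in $\Ho(\cC\Spsym)$ to a strict map $A[1/x] \to B$ in $\cC\Spsym$ extending $A \to B$.

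I expect the only delicate point to be the first step, namely articulating the universal property of Schwede's construction precisely enough to apply it here. The multiplication-by-$x$ telescope is manifestly a homotopy colimit of $A$-modules, and the content of \cite[I.Corollary 4.69]{Schwede_SymSp} is that this colimit can be taken in $\cC\Spsym$ in a way compatible with the $E_{\infty}$ structure. Once one trusts this, the argument reduces to the formal localization statement for graded rings applied to $\pi_*(A) \to \pi_*(B)$, together with a routine lifting against the cofibration $A \rightarrowtail A[1/x]$ and the fibrant object $B$.
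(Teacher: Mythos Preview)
Your approach is workable but takes a different route from the paper, and the paper's argument is shorter because it uses less of the internal structure of Schwede's construction. Rather than unpacking the telescope or pushout presentation of $A[1/x]$, the paper simply invokes the \emph{naturality} clause of Proposition~\ref{prop:inverting-element}: applying the functorial construction $(-)[1/x]$ to the map $A \to B$ yields a commutative square with corners $A$, $B$, $A[1/x]$, $B[1/x]$. Since $[x]$ is already a unit in $\pi_*(B)$, the $\pi_*$-isomorphism part of Proposition~\ref{prop:inverting-element} shows that $B \to B[1/x]$ is a stable equivalence, and inverting it in the homotopy category of $A$-algebras gives $A[1/x] \to B$ there. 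The rigidification step is then the same as yours. What the paper's approach buys is that it treats $A[1/x]$ as a black box: the only inputs are naturality and the homotopy-group calculation, both of which are already packaged in Proposition~\ref{prop:inverting-element}. Your approach, by contrast, requires knowing that the telescope description is compatible with the $E_\infty$ structure in a way that lets you map out---precisely the ``delicate point'' you flag---and this is a nontrivial portion of what Schwede proves rather than a formal consequence of it.
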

One can use the statement of
Proposition~\ref{prop:localization-of-free} and the argument given in
the last part of its proof to see that two possible extensions $A[1/x]
\to B$ in the corollary are homotopic relative $A$.
\begin{proof}[Proof of Corollary~\ref{cor:universal-property-inverting-element}]
Inverting $x$ and its image in $B$ we obtain a commutative square
\[\xymatrix@-1pc{A \ar[r] \ar[d] & B \ar[d]\\
  A[1/x] \ar[r] & B[1/x]}\] in which the map $B \to B[1/x]$ is a
stable equivalence. So we get a map $A[1/x] \to B$ in the homotopy
category of $A$-algebras. Since $A[1/x]$ is cofibrant as an
$A$-algebra and $B$ is fibrant, we can realize it as a map of
$A$-algebras.
\end{proof}

In many interesting cases, we can now identify the localizations of
the free graded pre-log structures introduced in
Example~\ref{ex:free-pre-log}:
\begin{proposition}\label{prop:localization-of-free}
  Let $A$ be a positive fibrant commutative symmetric ring spectrum,
  and let $x \colon S^{n_2} \to A_{n_1}$ represent a homotopy class
  $[x] \in \pi_{*}(A)$ of degree $n=n_2-n_1$. Then the localization
  $A[(C(x))^{-1}]$ of the free graded pre-log structure $C(x)$
  associated with $x$ is stably equivalent to the commutative
  symmetric ring spectrum $A[1/x]$.
\end{proposition}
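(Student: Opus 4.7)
The plan is to build maps in both directions between $A[1/x]$ and $A[(C(x))^{-1}]$ and to show they are mutually inverse, hence stable equivalences.

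First I would construct $\psi\colon A[(C(x))^{-1}] \to A[1/x]$ using the pushout description~\eqref{eq:pushout-for-localization}. Since $[x]$ is a unit in $\pi_*(A[1/x])$, the composite $C(x) \to \Omega^{\cJ}(A) \to \Omega^{\cJ}(A[1/x])$ lands in the grouplike subobject $\GLoneJof(A[1/x])$. The universal property of the group completion model structure on $\cC\cS^{\cJ}$ recalled in Section~\ref{sec:CSJ-group-compl} then provides a lift $C(x)^{\gp} \to \GLoneJof(A[1/x]) \subseteq \Omega^{\cJ}(A[1/x])$ extending this map up to homotopy. By the adjunction~\eqref{eq:CSJ-CSpsym-adjunction}, the resulting map $\mS^{\cJ}[C(x)^{\gp}] \to A[1/x]$ is compatible with $\mS^{\cJ}[C(x)] \to A \to A[1/x]$, and the pushout universal property yields~$\psi$.

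For a map $\phi\colon A[1/x] \to A[(C(x))^{-1}]$ in the other direction, I would check that $[x]$ becomes a unit in $\pi_*(A[(C(x))^{-1}])$: since $C(x)^{\gp}$ is grouplike with $\pi_0((C(x)^{\gp})_{h\cJ}) \iso \mZ$ by Example~\ref{ex:group-completion-of-free}, the generator $[x]$ has an inverse homotopy class supported in $\cJ$-space degree $-n$, and this inverse pushes forward to an inverse of the image of $[x]$ in $\pi_*(A[(C(x))^{-1}])$. Corollary~\ref{cor:universal-property-inverting-element} then produces the desired map $\phi$ extending the structure map $A \to A[(C(x))^{-1}]$.

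To conclude, I would argue that $\phi\circ\psi$ and $\psi\circ\phi$ are homotopic to the respective identities. For $\phi\circ\psi$, both it and $\id_{A[1/x]}$ are $A$-algebra endomorphisms of $A[1/x]$, and the model-categorical construction of $A[1/x]$ in Proposition~\ref{prop:inverting-element} (as an $A$-algebra cofibrant replacement of the Bousfield localization inverting $[x]$) forces any such endomorphism to be homotopic to the identity. For $\psi\circ\phi$, the pushout universal property of $A[(C(x))^{-1}]$ reduces the check to comparing the two maps after precomposition with $A \to A[(C(x))^{-1}]$ (where it is trivially the identity) and with $\mS^{\cJ}[C(x)^{\gp}] \to A[(C(x))^{-1}]$; in the latter case, both arise under adjunction as extensions of the same map $C(x) \to \GLoneJof(A[(C(x))^{-1}]) \subseteq \Omega^{\cJ}(A[(C(x))^{-1}])$ along the acyclic cofibration $C(x) \rightarrowtail C(x)^{\gp}$ in the group completion model structure, and such extensions into a grouplike target are unique up to homotopy. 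The main obstacle is to carry out this last uniqueness argument without invoking the uniqueness in Corollary~\ref{cor:universal-property-inverting-element}, since the remark following that corollary explicitly flags that its uniqueness is deduced from the present proposition; the argument must therefore rest only on the model-categorical universal properties of the group completion and of $A[1/x]$.
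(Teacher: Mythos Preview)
Your approach is essentially the paper's: construct maps in both directions using the pushout universal property and Corollary~\ref{cor:universal-property-inverting-element}, then show the two composites are equivalences, with the key step being homotopy uniqueness of extensions along $C(x)\rightarrowtail C(x)^{\gp}$ in the group completion model structure. Two points, however, need tightening.

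First, Corollary~\ref{cor:universal-property-inverting-element} requires a positive fibrant target, and the homotopy-uniqueness-of-lifts argument needs $\GLoneJof$ of the target to be fibrant in the group completion model structure. The pushout $A[(C(x))^{-1}]$ has no reason to be fibrant, so you cannot directly build $\phi$ into it or run the lifting argument there. The paper fixes this by factoring your $\psi$ as an acyclic cofibration $j\colon A[(C(x))^{-1}]\rightarrowtail P$ followed by a fibration $P\twoheadrightarrow A[1/x]$ and working with the fibrant replacement $P$ in place of $A[(C(x))^{-1}]$ throughout.

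Second, your treatment of the endomorphism of $A[1/x]$ overreaches and risks the circularity you yourself flag: rather than invoking a model-categorical uniqueness (which is what the remark after Corollary~\ref{cor:universal-property-inverting-element} derives \emph{from} this proposition), the paper simply checks that any $A$-algebra self-map of $A[1/x]$ induces the identity on $\pi_*(A)[1/[x]]$ and is therefore a stable equivalence. That is all that is needed. (Incidentally, your labels are swapped: $\psi\circ\phi$ is the endomorphism of $A[1/x]$, while $\phi\circ\psi$ is the endomorphism of the pushout handled via group-completion lifting.) For the remaining composite the paper argues exactly as you do: the two $A$-algebra maps $j$ and $kj$ into $P$ correspond under adjunction to two extensions of $C(x)\to\GLoneJof(P)$ along the acyclic cofibration $C(x)\rightarrowtail C(x)^{\gp}$, and these are homotopic by \cite[Proposition~7.6.13]{Hirschhorn_model}.
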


\begin{remark}
  The proposition exhibits one of the key features of graded log
  structures: The map $x$ also generates a free $\cI$-space pre-log
  structure on $S^n$~\cite[Example 7.18]{Rognes_TLS}. However, the
  localization of this $\cI$-space pre-log structure is a homotopy
  pushout of connective ring spectra, and will not give $A[1/x]$ if $n
  > 0$. In contrast, our graded setup shows a behavior that one would
  naively expect from algebra: If $A$ is a commutative ring, then an
  element $a \in A$ gives rise to a map $\mN_0 \to (A,\cdot)$ from the
  free commutative monoid on one generator into the multiplicative
  monoid of $A$. This determines a ring map $\mZ[x]\iso\mZ[\mN_0] \to
  A$, and the pushout of $\mZ[(\mZ,+)] = \mZ[x^{\pm 1}] \ot \mZ[x]\to
  A$ is the ring $A[1/a]$.
\end{remark}
\begin{proof}[Proof of Proposition~\ref{prop:localization-of-free}]
  Since $[x]$ is a unit in $\pi_*(A[1/x])$, we can extend the
  composite $C(x) \to \Omega^{\cJ}(A) \to \Omega^{\cJ}(A[1/x])$ over
  $C(x)  \rightarrowtail  C(x)^{\gp}$. The adjunction
  $(\mS^{\cJ}[-],\Omega^{\cJ})$ turns this into maps in $\cC\Spsym$,
  and the universal property of the pushout provides a map
  $A[C(x)^{-1}] \to A[1/x]$. Factoring this map as
  \[\xymatrix{A[C(x)^{-1}] \;\;\ar@{>->}^-{j}_-{\simeq}[r] &\; P \ar@{->>}[r] &
    A[1/x]}\] gives a fibrant replacement $P$ of $A[C(x)^{-1}]$, and it is
  enough to show that the map of $A$-algebras $P \to A[1/x]$ is a
  stable equivalence.

  We first show that the map induced by the composite $A \to
  A[C(x)^{-1}] \to P$ sends the homotopy class of $x$ to a unit in
  $\pi_*(P)$: The image of $[x]$ is invertible in $\pi_*(P)$ if and
  only if the image of $x$ in $\pi_0((\Omega^{\cJ}P)_{h\cJ})$
  represents a unit. The latter condition is satisfied since by
  construction, the image of $x$ in $\pi_0((\Omega^{\cJ}P)_{h\cJ})$
  lies in the image of the homomorphism $\pi_0((C(x)^{\gp})_{h\cJ})
  \to \pi_0((\Omega^{\cJ}P)_{h\cJ})$ whose domain is a group. Hence
  Corollary~\ref{cor:universal-property-inverting-element} provides a
  map $A[1/x] \to P$ of $A$-algebras.

  The composite $A[1/x] \to P \to A[1/x]$ is a stable equivalence: The
  induced map of homotopy groups $\pi_*(A)[1/[x]]\iso\pi_*(A[1/x]) \to
  \pi_*(A[1/x])\iso\pi_*(A)[1/[x]]$ is a map under $\pi_*(A)$ and
  hence an isomorphism. So it remains to show that the composite $P
  \to A[1/x]\to P$ is a stable equivalence. Let us write $k$ for this
  map. We use the acyclic cofibration $j\colon A[C(x)^{-1}] \to P$
  defined above and claim that $j$ and $kj$ are homotopic as maps of
  $A$-algebras. By adjunction, these maps correspond to two possible
  extensions of $C(x) \to \GLoneJof(P)$ along $C(x) \to
  C(x)^{\gp}$. Applying the homotopy uniqueness of lifts in model
  categories~\cite[Proposition 7.6.13]{Hirschhorn_model} to this
  extension problem in the group completion model structure shows
  that the extensions are homotopic, and it follows that $j$ and $kj$
  are homotopic. So $k$ is a stable equivalence because $j$ is.
\end{proof}

\section{Direct image log structures generated by homotopy
  classes}\label{sec:direct-image-log}
In this section we compare various ways of how classes in the homotopy
groups of connective ring spectra give rise to graded pre-log and log
structures and apply this to the case of $K$-theory spectra.

\subsection{Log structures on connective ring spectra}
As before we let $A$ be a positive fibrant commutative symmetric ring
spectrum. In addition, we now assume that $A$ is connective and fix a
map $x \colon S^{n_2} \to A_{n_1}$ with $n_1\geq 1$ that represents a
non-trivial homotopy class $[x] \in \pi_*(A)$ of even positive degree
$n=n_2-n_1$ such that the localization map $i\colon A \to A[1/x]$ of
Proposition~\ref{prop:inverting-element} exhibits $A$ as the
connective cover of~$A[1/x]$. This means that $i\colon A \to A[1/x]$
induces an isomorphism on stable homotopy groups of non-negative
degrees. The connective complex $K$-theory spectrum and the Bott class
are the motivating example for this setup.

Since $A$ is positive fibrant, the existence of such a representing
map $x$ for the relevant homotopy class is always ensured. Our
assumptions in particular imply that $A$ represents a non-trivial
homotopy type and that $[x]$ is not already a unit in $\pi_*(A)$. So
$x$ has a chance to generate a non-trivial log structure.

We have already introduced the free graded pre-log structure $C(x)$ on
$A$ (Example~\ref{ex:free-pre-log}) and the graded direct image log
structure $i_*\!\GLoneJof{A[1/x]}$ induced by $i\colon A \to A[1/x]$
(Example~\ref{ex:direct-image-log}). To clarify their properties and
relationship, we consider another less obvious but highly useful
graded pre-log structure arising in this setup:

\begin{construction}\label{cons:graded-direct-pre-log-from-x}
  The following diagram summarizes the various steps to be described
  next. They will lead to a graded pre-log structure $D(x)$ on $A$.\\
  \begin{equation}\label{eq:graded-direct-pre-log-from-x}
    \xymatrix{
      C(x) \ar@{>->}[r] \ar[d]\ar@/^.7pc/@<1ex>[rrrr]&  D(x)\ar@{->>}[r]^{\sim} &  D'(x)\ar[rr]\ar[dll]& & \Omega^{\cJ}(A)\ar[d]\\ 
      C(x)^{\gp}\ar[rrr]&&& \GLoneJof(A[1/x])\ar[r]& \Omega^{\cJ}(A[1/x])
    }
  \end{equation}  
  To obtain $D(x)$, we first observe that the composite
  \[ C(x) \to \Omega^{\cJ}(A) \to \Omega^{\cJ}(A[1/x]) \] factors
  through $\GLoneJof(A[1/x]) \to \Omega^{\cJ}(A[1/x])$ because $[x]$
  becomes invertible in $\pi_*(A[1/x])$. We then factor the resulting
  map $C(x) \to \GLoneJof(A[1/x])$ in the group completion model
  structure as an acyclic cofibration $C(x) \to C(x)^{\gp}$ followed
  by a fibration $C(x)^{\gp} \to \GLoneJof(A[1/x])$. Since
  $\GLoneJof(A[1/x])$ is positive $\cJ$-fibrant and grouplike, so is
  $C(x)^{\gp}$, and $C(x) \to C(x)^{\gp}$ is a possible choice for the
  group completion of~$C(x)$. 

  Next we form the pullback $D'(x)$ of $C(x)^{\gp} \twoheadrightarrow
  \Omega^{\cJ}(A[1/x]) \ot \Omega^{\cJ}(A)$ to obtain a pre-log
  structure $D'(x)$ on $\Omega^{\cJ}(A)$. A cofibrant replacement of
  $D'(x)$ relative to $C(x)$ defines the desired pre-log structure
  $D(x)$ on $\Omega^{\cJ}(A)$.

  We call $D(x)$ the \emph{graded direct image pre-log structure}
  associated with $x$. There will be no confusion with the more
  general direct image pre-log structure discussed in
  Example~\ref{ex:direct-image-pre-log}, although $D'(x)$ may of
  course be viewed as the direct image pre-log structure
  induced by $i\colon A \to A[1/x]$ and $C(x)^{\gp} \to
  \Omega^{\cJ}(A[1/x])$.
\end{construction}
\begin{remark}
  In the construction, one could also extend $C(x) \to
  \GLoneJof(A[1/x])$ over a given group completion $C(x)
  \rightarrowtail C(x)^{\gp}$ of $C(x)$ by means of the lifting axiom
  in the group completion model structure. Defining $C(x)^{\gp}$ as we
  did has the advantage of being functorial and making both $D'(x)$
  and $D(x)$ homotopy invariant.
\end{remark}

By the universal property of the coproduct $\boxtimes$ in
$\cC\cS^{\cJ}$, the pre-log structure $C(x) \to
\Omega^{\cJ}(A)$ and the trivial log structure $\GLoneJof(A)
\to \Omega^{\cJ}(A)$ give rise to a pre-log ring spectrum
$(A,C(x)\boxtimes\GLoneJof(A))$. The diagram of commutative
$\cJ$-space monoids~\eqref{eq:graded-direct-pre-log-from-x} now
induces the following commutative diagram of graded pre-log ring
spectra:
\begin{equation}\label{eq:all-pre-log-on-A}
\xymatrix@-1pc{
(A,C(x)) \ar[r] \ar[d] & (A,D(x)) \ar[d] \ar[dr] \\
(A,C(x)\boxtimes\GLoneJof(A)) \ar[r] & (A,i_*\!\GLoneJof(A[1/x]) \ar[r] & (A[1/x], \GLoneJof(A[1/x])
}
\end{equation}
The next theorem summarizes the properties of these pre-log ring
spectra:
\begin{theorem}\label{thm:all-pre-log-on-A}
  Let $A$ be a connective positive fibrant commutative symmetric ring
  spectrum and let $x \colon S^{n_2}\! \to\! A_{n_1}$ represent a
  non-trivial homotopy class \mbox{$[x] \in \pi_*(A)$} of even
  positive degree $n=n_2-n_1$ such that the localization map $i\colon
  A \to A[1/x]$ exhibits $A$ as the connective cover of~$A[1/x]$.

  Then the graded pre-log structures in the bottom line
  of~\eqref{eq:all-pre-log-on-A} are log structures, the two vertical
  maps induce weak equivalences after logification, and the
  localization of all four graded pre-log and log structures on $A$ is
  stably equivalent to $A[1/x]$.
\end{theorem}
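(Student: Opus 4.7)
The theorem asserts three things: the log structure status of the bottom row, the weak equivalences induced by the two vertical maps after logification, and the coincidence of localizations with $A[1/x]$. I would tackle them in this order, using Lemma~\ref{lem:localization-and-logification}, Proposition~\ref{prop:localization-of-free} and Corollary~\ref{cor:universal-property-inverting-element} to deduce the localization claim from the first two together with a universal property argument.

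\textbf{Log structure claim.} The trivial log structure $(A[1/x],\GLoneJof(A[1/x]))$ is a log structure by Example~\ref{ex:trivial-log}, and the direct image $(A,i_*\!\GLoneJof(A[1/x]))$ by Example~\ref{ex:direct-image-log}. For $(A,C(x)\boxtimes\GLoneJof(A))$ I would use the flatness of the cofibrant $C(x)$ and Lemma~\ref{lem:monoidal-structure-map-equiv} to identify $\pi_0((C(x)\boxtimes\GLoneJof(A))_{h\cJ})$ with $\mN_0\times\pi_0(A)^\times$, and then invoke the connectivity of $A$ together with the positive degree $n$ of $[x]$ to conclude that $[x]^i$ is never a unit for $i\geq 1$. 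Consequently the preimage of $\GLoneJof(A)$ in $C(x)\boxtimes\GLoneJof(A)$ is the $i=0$ summand $U^{\cJ}\boxtimes\GLoneJof(A)\iso\GLoneJof(A)$, which maps identically to $\GLoneJof(A)$.

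\textbf{Logification equivalences.} For the first vertical map $(A,C(x))\to(A,C(x)\boxtimes\GLoneJof(A))$ the same component analysis gives $C(x)^{-1}\GLoneJof(A)=U^{\cJ}$, so the logification is $C(x)^a\iso C(x)\boxtimes G$ with $G\xrightarrow{\sim}\GLoneJof(A)$; flatness of $C(x)$ then makes $C(x)\boxtimes G\to C(x)\boxtimes\GLoneJof(A)$ a $\cJ$-equivalence. The second vertical map is the crux. Since $(A,i_*\!\GLoneJof(A[1/x]))$ is already a log structure, it suffices to show $D(x)^a\to i_*\!\GLoneJof(A[1/x])$ is a $\cJ$-equivalence. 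I would rewrite $D(x)\simeq C(x)^{\gp}\times_{\GLoneJof(A[1/x])} i_*\!\GLoneJof(A[1/x])$, split both sides into the preimages of units and non-units under the map to $\Omega^{\cJ}(A)$, and argue that the pushout in $D(x)^a=D(x)\boxtimes_{\widetilde{D(x)}}G$ restores the missing $\GLoneJof(A)$-factor and recovers the full $i_*\!\GLoneJof(A[1/x])$. The key ingredient is the connective cover hypothesis: it forces $\pi_*(A[1/x])^\times\iso\pi_0(A)^\times\times\langle[x]\rangle$, and via the group completion analysis of~\cite{Sagave_spectra-of-units} this lifts to a $\cJ$-equivalence $\GLoneJof(A)\boxtimes C(x)^{\gp}\xrightarrow{\sim}\GLoneJof(A[1/x])$ that implements the desired comparison.

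\textbf{Localization and main obstacle.} Proposition~\ref{prop:localization-of-free} gives $A[C(x)^{-1}]\simeq A[1/x]$, and Lemma~\ref{lem:localization-and-logification} applied to the two logification equivalences above yields both $A[(C(x)\boxtimes\GLoneJof(A))^{-1}]\simeq A[1/x]$ and $A[(i_*\!\GLoneJof(A[1/x]))^{-1}]\simeq A[D(x)^{-1}]$. It remains to identify $A[D(x)^{-1}]$ with $A[1/x]$: the composite $C(x)\to D(x)\to\Omega^{\cJ}(A)$ witnesses $[x]$ in the image of $D(x)$, so $A\to A[D(x)^{-1}]$ inverts $[x]$ and Corollary~\ref{cor:universal-property-inverting-element} furnishes $A[1/x]\to A[D(x)^{-1}]$; conversely, by construction every element of $D(x)$ maps into $\GLoneJof(A[1/x])$ in $\Omega^{\cJ}(A[1/x])$, so the pushout universal property of $A[D(x)^{-1}]=\mS^{\cJ}[D(x)^{\gp}]\sm_{\mS^{\cJ}[D(x)]}A$ produces $A[D(x)^{-1}]\to A[1/x]$, and both composites are the identity on $\pi_*(A)[1/[x]]$. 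The main obstacle is the second vertical logification equivalence, where making the pushout comparison precise requires upgrading the $\pi_0$-level splitting of $\pi_*(A[1/x])^\times$ to an actual $\cJ$-equivalence of graded units, a step that genuinely uses the structure of $\cJ$-spaces and their group completions rather than $\cI$-spaces, and is the whole reason one works with graded log structures in the first place.
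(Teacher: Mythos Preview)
Your overall architecture is right and matches the paper's: establish Lemmas~\ref{lem:logification-of-Dx} and~\ref{lem:logification-of-free} for the logification equivalences, combine Proposition~\ref{prop:localization-of-free} with Lemma~\ref{lem:localization-and-logification} for the localizations, and treat $A[D(x)^{-1}]$ separately. Your treatment of $(A,C(x))\to(A,C(x)\boxtimes\GLoneJof(A))$ is essentially the paper's Lemma~\ref{lem:logification-of-free}.

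There is, however, a genuine error in your approach to the second vertical map. The $\cJ$-equivalence $\GLoneJof(A)\boxtimes C(x)^{\gp}\xrightarrow{\sim}\GLoneJof(A[1/x])$ you assert is \emph{false}. On homotopy colimits it becomes a map $\GLoneJof(A)_{h\cJ}\times QS^0\to\GLoneJof(A[1/x])_{h\cJ}$, and while this is a $\pi_0$-isomorphism under your hypotheses, it fails on higher homotopy: the identity component of the source is (identity component of $\GLoneJof(A)_{h\cJ}$)$\times Q_0S^0$, and the connective cover assumption forces the identity components of $\GLoneJof(A)_{h\cJ}$ and $\GLoneJof(A[1/x])_{h\cJ}$ to be equivalent. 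The extra $Q_0S^0$ factor contributes nontrivially to every $\pi_i$ of the source, so the map cannot be an equivalence. (For $A=ku$ one computes $\pi_1$ of the identity component as $\mZ/2\times\mZ/2$ on the source versus $\mZ/2$ on the target.) The paper's Lemma~\ref{lem:logification-of-Dx} avoids this by using the \emph{free} $\cJ$-spaces $\cJ((\bld{n_1},\bld{n_2}),-)^{\boxtimes k}$, whose homotopy colimits are contractible, rather than $C(x)^{\gp}$. The crucial input is Lemma~\ref{lem:Cx-to-Dx_on-hJ}, which identifies $D(x)_{h\cJ}\simeq(QS^0)_{\geq 0}$ and thereby furnishes a $\cJ$-equivalence $\coprod_{k\geq 0}D'(x)_0\boxtimes\cJ((\bld{n_1},\bld{n_2}),-)^{\boxtimes k}\to D'(x)$; applying $G\boxtimes_{D'(x)_0}(-)$ then reduces to checking that multiplication by $x$ permutes the $\cJ$-degree parts of $i_*\!\GLoneJof(A[1/x])$.

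Your argument for $A[D(x)^{-1}]\simeq A[1/x]$ is also incomplete: knowing that one composite $A[1/x]\to A[D(x)^{-1}]\to A[1/x]$ is the identity on $\pi_*(A)[1/[x]]$ does not by itself control the other composite, since you do not yet know $\pi_*(A[D(x)^{-1}])$. You would need to invoke homotopy uniqueness of lifts in the group completion model structure, exactly as in the proof of Proposition~\ref{prop:localization-of-free}. The paper's Lemma~\ref{lem:localization-of-Dx} sidesteps this by instead showing that $C(x)\to D(x)$ induces an equivalence on localizations: one checks that the pushout $C(x)^{\gp}\boxtimes_{C(x)}D(x)$ is grouplike, using that $\pi_0(C(x)_{h\cJ})\to\pi_0(D(x)_{h\cJ})$ is an isomorphism by Lemma~\ref{lem:Cx-to-Dx_on-hJ}.
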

In other words, $x$ induces two different graded log structures
$C(x)\boxtimes\GLoneJof(A)$ and $i_*\!\GLoneJof(A[1/x])$ on $A$ which
arise as the logification of ``smaller'' graded pre-log structures
$C(x)$ and $D(x)$. Both these log structures approximate $A[1/x]$ in
that their localizations are equivalent to $A[1/x]$. However, we will
see that $i_*\!\GLoneJof(A[1/x])$ is the more useful one: It is more
canonical in that it does not depend on the choice of the
representative $x$, and we employ it for our results about formally
graded log \'{e}tale extensions in Section~\ref{sec:log-etale-maps}.

The reason for introducing $D(x)$ is that it is a convenient
presentation of the log-structure $i_*\!\GLoneJof(A[1/x])$ that is
easier to work with. This will become clear in the proof
Theorem~\ref{thm:ell-to-k-formally-etale} and in the proof of
Theorem~\ref{thm:all-pre-log-on-A} at the end of this paragraph.

\begin{remark}
  The analogy with the situation for discrete valuation rings allows
  for the following geometric interpretation of the previous theorem:
  The localization $A \to A[1/x]$ corresponds to an open immersion
  $j\colon\mathrm{spec}(A[1/x]) \to \mathrm{spec}(A)$. The maps
  in~\eqref{eq:all-pre-log-on-A} give rise to various factorizations
  \[ \mathrm{spec}(A[1/x]) \to \mathrm{spec}(A,M) \to
  \mathrm{spec}(A)\] of $j$ in the category of log schemes that
  provide relative compactifications of $j$.

  In the evolving subject of derived algebraic geometry, one often
  considers sheaves of \emph{connective} $E_{\infty}$ spectra. The
  above suggests that graded log structures might be useful for
  treating sheaves of \emph{periodic} $E_{\infty}$ ring spectra.
\end{remark}
In order to prove the theorem, we begin with giving a more explicit
description of the homotopy type of the commutative $\cJ$-space monoid
$D(x)$. 

\begin{lemma}\label{lem:Cx-to-Dx_on-hJ}
  The space $ D(x)_{h\cJ}$ is weakly equivalent to $(QS^0)_{\geq 0}$,
  and the map $C(x)_{h\cJ} \to D(x)_{h\cJ}$ is homotopic to the map
  $\coprod_{k\geq 0}B\Sigma_k \to (QS^0)_{\geq 0}$ obtained by
  restricting to the non-negative path components in the group
  completion $QS^0$ of $C(x)_{h\cJ}$. 
\end{lemma}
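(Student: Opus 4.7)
The plan is to analyze the strict pullback $D'(x) = \Omega^{\cJ}(A) \times_{\Omega^{\cJ}(A[1/x])} C(x)^{\gp}$ bidegree by bidegree and then compute $h\cJ$. Recall from Example~\ref{ex:group-completion-of-free} that $C(x)$ is concentrated in $\cJ$-space degrees of the form $kn$ with $k \geq 0$, while $C(x)^{\gp}$ is concentrated in degrees $kn$ with $k \in \mZ$, and that $C(x)^{\gp}_{h\cJ} \simeq QS^0$ has its $k$-th path component lying in degree $kn$. Since $C(x)^{\gp} \to \Omega^{\cJ}(A[1/x])$ is a positive $\cJ$-fibration by construction, the strict pullback $D'(x)(\bld{m_1},\bld{m_2})$ agrees with the homotopy pullback in every positive bidegree $(m_1 \geq 1)$; as $D(x) \to D'(x)$ is a $\cJ$-equivalence, it suffices to describe $D'(x)$ in positive bidegrees.

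For a positive bidegree with $m := m_2 - m_1 = kn$ and $k \geq 0$, I would argue that the map $\Omega^{m_2}(A_{m_1}) \to \Omega^{m_2}((A[1/x])_{m_1})$ is a weak equivalence. Indeed, its effect on $\pi_j$ is the map $\pi_{j+m}(A) \to \pi_{j+m}(A[1/x])$, which is an isomorphism whenever $j+m \geq 0$ by the connective-cover hypothesis, and this holds for all $j \geq 0$ since $m \geq 0$. Pulling back the fibration $C(x)^{\gp}(\bld{m_1},\bld{m_2}) \to \Omega^{m_2}((A[1/x])_{m_1})$ along this weak equivalence yields a weak equivalence $D'(x)(\bld{m_1},\bld{m_2}) \xrightarrow{\simeq} C(x)^{\gp}(\bld{m_1},\bld{m_2})$.

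For a positive bidegree with $m = kn$ and $k < 0$, the space $\Omega^{m_2}(A_{m_1})$ is path-connected since $\pi_m(A) = 0$ by connectivity of $A$, and it therefore maps into the basepoint component of the target. In contrast, $C(x)^{\gp}(\bld{m_1},\bld{m_2})$ maps into the component of $[x]^k$, which is disjoint from the basepoint because $[x]$ is a unit in $\pi_*(A[1/x])$; hence the strict pullback is empty. For bidegrees with $m \notin n\mZ$, $C(x)^{\gp}(\bld{m_1},\bld{m_2})$ itself is empty. Combining these cases, the projection $D'(x) \to C(x)^{\gp}$ is a levelwise weak equivalence onto the part of $C(x)^{\gp}$ supported in degrees $\{kn : k \geq 0\}$, so applying $(-)_{h\cJ}$ identifies $D(x)_{h\cJ}$ with the union of non-negative components of $QS^0$, namely $(QS^0)_{\geq 0}$.

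For the second assertion, diagram~\eqref{eq:graded-direct-pre-log-from-x} shows that the composite $C(x) \to D(x) \to C(x)^{\gp}$ is the chosen group completion. By Example~\ref{ex:group-completion-of-free} and the Barratt-Priddy-Quillen theorem, this composite induces on $h\cJ$ the standard map $\coprod_{k \geq 0} B\Sigma_k \to QS^0$, which already lands in $(QS^0)_{\geq 0}$. Factoring this through the equivalence $D(x)_{h\cJ} \simeq (QS^0)_{\geq 0}$ identifies $C(x)_{h\cJ} \to D(x)_{h\cJ}$ with the restriction of the group completion map to non-negative components, as claimed. The main obstacle I expect is the bookkeeping in the bidegree-wise argument: ensuring that the strict pullback is the correct homotopy pullback in all positive bidegrees and that the levelwise picture on positive bidegrees determines the correct homotopy type after $h\cJ$, both of which hinge on the positive $\cJ$-fibrancy properties set up in Construction~\ref{cons:graded-direct-pre-log-from-x}.
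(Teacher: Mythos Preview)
Your proposal is correct and follows essentially the same argument as the paper: reduce to $D'(x)$, analyze the pullback bidegree by bidegree using that $\Omega^{\cJ}(A)\to\Omega^{\cJ}(A[1/x])$ is a weak equivalence in non-negative $\cJ$-space degree while the pullback is empty in negative degree, and then invoke the Barratt--Priddy--Quillen description of $C(x)^{\gp}$ from Example~\ref{ex:group-completion-of-free}. The one point you flag but do not fully resolve---that the levelwise description in positive bidegrees determines $(-)_{h\cJ}$---is handled in the paper by citing the homotopy cofinality of the full subcategory on objects $(\bld{m_1},\bld{m_2})$ with $m_1\geq 1$ in $\cJ$ \cite[Corollary~5.9]{Sagave-S_diagram}; with that reference in hand your argument is complete.
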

\begin{proof}
  It is enough to consider $D'(x)$. The space $D'(x)(\bld{m_1},\bld{m_2})$
  is the pullback of
  \[(C(x)^{\gp})(\bld{m_1},\bld{m_2}) \to
  \Omega^{\cJ}(A[1/x])(\bld{m_1},\bld{m_2}) \ot
  \Omega^{\cJ}(A)(\bld{m_1},\bld{m_2}).\]
  If $m_2-m_1$ is negative, the pullback is empty because
  the image of a point in $D'(x)(\bld{m_1},\bld{m_2})$ in
  $\Omega^{\cJ}(A)(\bld{m_1},\bld{m_2})$ would represent a power of an
  inverse of $[x]$ in $\pi_*(A)$. But our assumptions imply that $[x]$
  is not a unit in $\pi_*(A)$.

  Since $\pi_*(A) \to \pi_*(A[1/x])$ is an isomorphism in non-negative
  degrees, the map $\Omega^{\cJ}(A) \to \Omega^{\cJ}(A[1/x])$ is a
  weak equivalence when evaluated on $(\bld{m_1},\bld{m_2})$ with
  $m_1\geq 1$ and $m_2-m_1\geq 0$. The same holds for the base change
  $D'(x) \to C(x)^{\gp}$ because $C(x)^{\gp} \to \Omega^{\cJ}(A[1/x])$
  is a positive fibration by construction. Since the full subcategory
  of $\cJ$ on the objects $(\bld{m_1},\bld{m_2})$ with $m_1\geq 1$ is
  a homotopy cofinal subcategory of $\cJ$~\cite[Corollary
  5.9]{Sagave-S_diagram}, the description of the group completion
  $C(x)^{\gp}$ of $C(x)$ in Example~\ref{ex:group-completion-of-free}
  proves the claim.
\end{proof}

The following lemma is the key step towards Theorem~\ref{thm:all-pre-log-on-A}:
\begin{lemma}\label{lem:logification-of-Dx}
  The map $(A,D(x)) \to (A,i_*\!\GLoneJof(A[1/x]))$
  in~\eqref{eq:all-pre-log-on-A} induces a weak equivalence
  $(A,D(x)^a) \to (A,i_*\!\GLoneJof(A[1/x]))$.
\end{lemma}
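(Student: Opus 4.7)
The map $(A, D(x)^a) \to (A, L)$ in the statement, where I write $L = i_*\!\GLoneJof(A[1/x])$ for brevity, arises by functoriality of logification: since $L$ is itself a graded log structure (Example~\ref{ex:direct-image-log}), one has $L^a \simeq L$, and applying logification to the given map $(A,D(x)) \to (A,L)$ yields the claimed comparison. Since the ring spectrum component is the identity, it suffices to show that $D(x)^a \to L$ is a $\cJ$-equivalence. The plan is to analyze both sides via the unit/non-unit decomposition from the proof of Lemma~\ref{lem:logification}: writing $\widetilde{M}$ for the preimage $\alpha^{-1}\GLoneJof(A)$ and $\widehat{M}$ for its complementary $\cJ$-space, the log structure condition forces $\widetilde{D(x)^a} \simeq \widetilde{L} \simeq \GLoneJof(A)$ with the induced map on unit parts an equivalence. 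Using the identification $\widehat{D(x)^a} \iso \widehat{D(x)} \boxtimes_{\widetilde{D(x)}} G$ from that proof (with $G \simeq \GLoneJof(A)$), the problem reduces to showing the non-unit parts $\widehat{D(x)} \boxtimes_{\widetilde{D(x)}} G \to \widehat{L}$ are $\cJ$-equivalent.

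Next, I would give explicit descriptions of both sides using the connective cover hypothesis on $i\colon A \to A[1/x]$. Because $\Omega^{\cJ}(A) \to \Omega^{\cJ}(A[1/x])$ is a $\cJ$-equivalence in non-negative $\cJ$-degrees (on the cofinal subcategory with $m_1 \geq 1$), Lemma~\ref{lem:Cx-to-Dx_on-hJ} refines to give $\widehat{D(x)} \simeq \coprod_{k \geq 1}\widehat{D(x)}_k$ with each $\widehat{D(x)}_k$ concentrated in $\cJ$-degree $nk$ and $\cJ$-equivalent to the $[x]^k$-component of $\Omega^{\cJ}(A)$. Meanwhile, $\widehat{L}$ splits as $\coprod_{(u,k)}\widehat{L}_{(u,k)}$ indexed by $u \in \pi_0(A)^\times$ and $k \geq 1$, each piece $\cJ$-equivalent to the $u[x]^k$-component of $\Omega^{\cJ}(A)$. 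Applying the corresponding splitting $G \simeq \coprod_u G_u$ and distributivity of $\boxtimes$ over $\coprod$ decomposes the pushout into pieces $\widehat{D(x)}_k \boxtimes_{\widetilde{D(x)}} G_u$, and I would match each with $\widehat{L}_{(u,k)}$ using that multiplication by the unit $u$ induces a $\cJ$-equivalence from the $[x]^k$-component to the $u[x]^k$-component of $\Omega^{\cJ}(A)$.

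The main obstacle is making the pushout identification $\widehat{D(x)}_k \boxtimes_{\widetilde{D(x)}} G_u \simeq (\Omega^{\cJ}(A))_{u[x]^k}$ rigorous: since $\widehat{D(x)}$ is only a $\cJ$-space with $\widetilde{D(x)}$-action rather than a commutative $\cJ$-space monoid, one must ensure suitable flatness (cofibrancy) and invoke Lemma~\ref{lem:monoidal-structure-map-equiv} to translate the relative $\boxtimes$ into a homotopy pushout of diagrams on $h\cJ$, over which the unit-translation equivalences can be composed. Confirming that the piecewise identifications together realize precisely the canonical map $D(x)^a \to L$ produced by functoriality, rather than an unrelated comparison, will require careful bookkeeping of the module structures and their compatibility with the common structure map to $\Omega^{\cJ}(A)$.
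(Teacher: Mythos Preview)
Your strategy---splitting into unit and non-unit parts and matching after base change to $G$---parallels the paper's, but your identification of $\widehat{D(x)}_k$ is wrong, and this breaks the argument. You claim each $\widehat{D(x)}_k$ is $\cJ$-equivalent to the $[x]^k$-component of $\Omega^{\cJ}(A)$. But $D'(x)$ is the pullback of $C(x)^{\gp}\to\Omega^{\cJ}(A[1/x])\leftarrow\Omega^{\cJ}(A)$, and since the right-hand map is an equivalence in non-negative $\cJ$-degrees, $D'(x)$ is equivalent there to $C(x)^{\gp}$, whose $h\cJ$ is $QS^0$ (Example~\ref{ex:group-completion-of-free} and Lemma~\ref{lem:Cx-to-Dx_on-hJ}). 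So each $\widehat{D(x)}_k$ has the homotopy type of a component of $QS^0$, not of a component of $\Omega^{\cJ}(A)_{h\cJ}$; these differ unless $A$ is essentially the sphere. With this misidentification the proposed matching $\widehat{D(x)}_k\boxtimes_{\widetilde{D(x)}}G_u\simeq\widehat{L}_{(u,k)}$ cannot go through as written.

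The paper avoids this by never comparing $\widehat{D(x)}_k$ to $\Omega^{\cJ}(A)$ directly. Instead it exhibits a $\cJ$-equivalence of $D'(x)_0$-modules
\[
\textstyle\coprod_{k\geq 0} D'(x)_0\boxtimes\cJ((\bld{n_1},\bld{n_2}),-)^{\boxtimes k}\;\longrightarrow\;D'(x),
\]
built from the canonical maps $\cJ((\bld{n_1},\bld{n_2}),-)^{\boxtimes k}\to C(x)\to D'(x)$ and verified using Lemma~\ref{lem:Cx-to-Dx_on-hJ} together with Lemma~\ref{lem:monoidal-structure-map-equiv}. The point is that the shift factors $\cJ((\bld{n_1},\bld{n_2}),-)^{\boxtimes k}$ are flat with contractible $h\cJ$, so base-changing along $D'(x)_0\to G$ yields $\coprod_{k\geq 0}G\boxtimes\cJ((\bld{n_1},\bld{n_2}),-)^{\boxtimes k}$ cleanly. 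This is then compared with $i_*\!\GLoneJof(A[1/x])$ degree by degree, using that $G\simeq\GLoneJof(A)\simeq(i_*\!\GLoneJof(A[1/x]))_0$ and that multiplication by $x$ induces equivalences between the $\cJ$-degree parts of $i_*\!\GLoneJof(A[1/x])$. Your further splitting of $G$ and $\widehat{L}$ by $u\in\pi_0(A)^\times$ then becomes unnecessary.
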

\begin{proof}
  In is enough to show the statement for $(A,D'(x)) \to
  (A,i_*\!\GLoneJof(A[1/x]))$. 
 
  We write $\alpha \colon D'(x) \to \Omega^{\cJ}(A)$ for the structure
  map of $(A,D'(x))$. In $\cJ$-space degree $0$, its restriction
  $D'(x)_0 \to \Omega^{\cJ}(A)$ factors through $\GLoneJof(A) \to
  \Omega^{\cJ}(A)$ since the zero component of $D'(x)_{h\cJ}$ maps
  into the component of the unit of
  $(\Omega^{\cJ}(A))_{h\cJ}$. Because $A$ is connective, $
  \GLoneJof(A)$ is concentrated in $\cJ$-space degree $0$. It follows
  that the pullback $\alpha^{-1}(\GLoneJof(A))$ is isomorphic to
  $D'(x)_0$.

  Let $G$ be the replacement of $\GLoneJof(A)$ used in the
  logification of
  Construction~\ref{cons:construction-logification}. We have to show
  that the following map is a $\cJ$-equivalence:
  \[ D'(x)\boxtimes_{D'(x)_0} G \to i_*\!\GLoneJof(A[1/x]) \]
  Composing the canonical maps $\cJ((\bld{n_1},\bld{n_2}),-)^{\boxtimes k} \to C(x)$
  with $C(x) \to D'(x)$ and inducing up to a map of $D'(x)_0$-modules
  provides a map of $D'(x)_0$-modules
  \begin{equation}\label{eq:D-of-x-sum-of-shifted-copies}
    \textstyle\coprod_{k\geq 0} D'(x)_0 \boxtimes
    (\cJ((\bld{n_1},\bld{n_2}),-))^{\boxtimes k} \to D'(x).
  \end{equation} 
  Lemma~\ref{lem:Cx-to-Dx_on-hJ} and
  Lemma~\ref{lem:monoidal-structure-map-equiv} show that this map is a
  $\cJ$-equivalence. If we view \mbox{$G\boxtimes_{D'(x)_0}-$} as a
  functor from $D'(x)_0$-modules to $G$-modules, we can apply it to
  ~\eqref{eq:D-of-x-sum-of-shifted-copies}. This reduces the claim to
  showing that the composite map
  \begin{equation}\label{eq:GLoneJofA-x-inverted-decomp}
    \textstyle\coprod_{k\geq 0} G \boxtimes
    (\cJ((\bld{n_1},\bld{n_2}),-))^{\boxtimes k} \to
    i_*\!\GLoneJof(A[1/x])
  \end{equation} 
  is a $\cJ$-equivalence. Since $A$ is connective, $G$ is concentrated
  in $\cJ$-space degree~$0$, and the map $G \to
  i_*\!\GLoneJof(A[1/x])_0 \simeq \GLoneJof(A[1/x])_0$ into the
  $\cJ$-space degree~$0$ part of $i_*\!\GLoneJof(A[1/x])$ is a
  $\cJ$-equivalence.  The $i_*\!\GLoneJof(A[1/x])$ is the part of
  $\GLoneJof(A[1/x])$ sitting in non-negative $\cJ$-space
  degrees. Since multiplication with the element in
  $\GLoneJof(A[1/x])(\bld{m_1},\bld{m_2})$ represented by $x$ induces
  an equivalence between the different $\cJ$-space degree parts of
  $i_*\!\GLoneJof(A[1/x])$, it follows from
  Lemma~\ref{lem:monoidal-structure-map-equiv} that the
  map~\eqref{eq:GLoneJofA-x-inverted-decomp} is a $\cJ$-equivalence.
\end{proof}

\begin{remark}
  The proof of the previous lemma suggests that we may think of $D(x)$
  as a kind of polynomial algebra on $x$, although it is certainly not
  free. The key feature of $D(x)$ is that the group completion in its
  construction ensures that the components corresponding to the powers
  of $x$ are equivalent. Obviously, this property is not shared by the
  free object $C(x)$. This distinction between the homotopical
  counterparts ``free'' and ``polynomial'' algebras does not occur in
  algebra and is one reasons for why not all pre-log structures
  in~\eqref{eq:all-pre-log-on-A} have algebraic precursors.
\end{remark}

\begin{lemma}\label{lem:logification-of-free}
The map $(A,C(x)) \to (A,C(x)\boxtimes \GLoneJof(A))$ induces a weak equivalence
$(A,C(x)^a) \to (A,C(x)\boxtimes \GLoneJof(A))$.
\end{lemma}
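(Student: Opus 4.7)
The plan is to compute the logification $C(x)^a$ explicitly and to observe that the map in question is induced by a $\cJ$-equivalence of fibrant replacements of $\GLoneJof(A)$.

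First I would identify $\alpha^{-1}\GLoneJof(A)$ for the structure map $\alpha \colon C(x) \to \Omega^{\cJ}(A)$. Since $A$ is connective, $\GLoneJof(A)$ is concentrated in $\cJ$-space degree $0$. On the other hand, in the decomposition $C(x) = \coprod_{k\geq 0}(\cJ((\bld{n_1},\bld{n_2}),-))^{\boxtimes k}/\Sigma_k$ each summand with $k\geq 1$ is concentrated in $\cJ$-space degrees $kn$ with $n=n_2-n_1>0$. Hence only the $k=0$ summand contributes to the pullback, and on an object $(\bld{m},\bld{m})$ the degree-$0$ part of $C(x)$ agrees with $U^{\cJ}(\bld{m},\bld{m})$ and maps into the component of the unit of $\pi_0 A$, which lies in $\GLoneJof(A)$. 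This gives $\alpha^{-1}\GLoneJof(A) \iso U^{\cJ}$, the monoidal unit of $\cC\cS^{\cJ}$.

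The second step exploits this. Let $G$ be the fibrant replacement of $\GLoneJof(A)$ under $U^{\cJ}$ chosen in Construction~\ref{cons:construction-logification}. Since $U^{\cJ}$ is the monoidal unit, the pushout defining the logification simplifies to
\[
C(x)^a = C(x) \boxtimes_{U^{\cJ}} G \;\iso\; C(x) \boxtimes G,
\]
and the map $(A,C(x)^a) \to (A, C(x)\boxtimes\GLoneJof(A))$ is induced on the monoid part by the acyclic fibration $G \xrightarrow{\sim} \GLoneJof(A)$. Compatibility of the structure maps to $\Omega^{\cJ}(A)$ follows from the universal property of the pushout and the fact that both maps restrict to the structure maps of $C(x)$ and of the trivial log structure.

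Finally, $C(x)$ is cofibrant in the positive $\cJ$-model structure, being a free commutative $\cJ$-space monoid on a generator in positive $\cJ$-degree, and is therefore flat. By the preservation property recalled after Lemma~\ref{lem:monoidal-structure-map-equiv}, the functor $C(x)\boxtimes-$ preserves $\cJ$-equivalences between all objects, so the induced map $C(x)\boxtimes G \to C(x)\boxtimes\GLoneJof(A)$ is a $\cJ$-equivalence. Combined with $\id_A$ on the ring-spectrum side, this is the desired weak equivalence of graded pre-log ring spectra. I do not anticipate a serious obstacle; the crux is the identification $\alpha^{-1}\GLoneJof(A) \iso U^{\cJ}$, which rests on the hypotheses that $A$ is connective and $n>0$.
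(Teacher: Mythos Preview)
Your proof is correct and follows essentially the same approach as the paper: identify $\alpha^{-1}\GLoneJof(A)$ with the degree-$0$ part $C(x)_0 \iso U^{\cJ}$, deduce that the logification pushout collapses to $C(x)\boxtimes G$, and use flatness of $C(x)$ to conclude that $C(x)\boxtimes G \to C(x)\boxtimes\GLoneJof(A)$ is a $\cJ$-equivalence. One minor terminological slip: in Construction~\ref{cons:construction-logification} the object $G$ arises from a cofibration--acyclic-fibration factorization, so it is a \emph{cofibrant} replacement of $\GLoneJof(A)$ under $U^{\cJ}$, not a fibrant one.
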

\begin{proof}
  Let $\alpha \colon C(x) \to \Omega^{\cJ}(A)$ the structure map of
  $(A,C(x))$.  A similar argument as in
  Lemma~\ref{lem:logification-of-Dx} shows $\alpha^{-1}(\GLoneJof(A))
  \iso C(x)_0$. Since $C(x)_0$ is the monoidal unit, $C(x)^a =
  C(x)\boxtimes_{C(x)_0} G \iso C(x) \boxtimes G$ is $\cJ$-equivalent
  to $C(x)\boxtimes \GLoneJof(A)$.
\end{proof}
\begin{lemma}\label{lem:localization-of-Dx}
  The localization $A[D(x)^{-1}]$ of $(A,D(x))$ is stably equivalent
  to the commutative symmetric ring spectrum $A[1/x]$.
\end{lemma}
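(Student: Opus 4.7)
The plan is to mimic the proof of Proposition~\ref{prop:localization-of-free}: I will construct a pair of $A$-algebra maps $A[D(x)^{-1}] \to A[1/x]$ and $A[1/x] \to A[D(x)^{-1}]$, and then check that both composites are stable equivalences, so that both maps are stable equivalences.

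First I would build the map $A[D(x)^{-1}] \to A[1/x]$. By Construction~\ref{cons:graded-direct-pre-log-from-x}, the structure map $D(x) \to \Omega^{\cJ}(A)$ fits into a commutative diagram in which the composite $D(x) \to \Omega^{\cJ}(A[1/x])$ factors canonically as $D(x) \to D(x)^{\gp} \to \GLoneJof(A[1/x]) \to \Omega^{\cJ}(A[1/x])$ (one may take $D(x)^{\gp}=C(x)^{\gp}$ using Lemma~\ref{lem:Cx-to-Dx_on-hJ}, since $D(x)_{h\cJ} \simeq (QS^0)_{\geq 0}$ has group completion $QS^0 \simeq C(x)^{\gp}_{h\cJ}$). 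Applying $\mS^{\cJ}[-]$ and invoking the universal property of the pushout $A[D(x)^{-1}] = \mS^{\cJ}[D(x)^{\gp}] \sm_{\mS^{\cJ}[D(x)]} A$ against the compatible maps into $A[1/x]$ produces the desired map.

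For the reverse direction, by Corollary~\ref{cor:universal-property-inverting-element} it suffices to verify that $[x]$ maps to a unit in $\pi_*(A[D(x)^{-1}])$. By the pushout structure, the composite $D(x) \to \Omega^{\cJ}(A) \to \Omega^{\cJ}(A[D(x)^{-1}])$ agrees with $D(x) \to D(x)^{\gp} \to \Omega^{\cJ}(\mS^{\cJ}[D(x)^{\gp}]) \to \Omega^{\cJ}(A[D(x)^{-1}])$. Since $x$ lies in the image of $C(x) \to D(x)$, its class in $\pi_0((\Omega^{\cJ}(A[D(x)^{-1}]))_{h\cJ})$ belongs to the image of the group $\pi_0(D(x)^{\gp}_{h\cJ})$ and is hence invertible, exactly as in the invertibility argument in the proof of Proposition~\ref{prop:localization-of-free}.

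Finally, one checks that both composites are stable equivalences. The composite $A[1/x] \to A[D(x)^{-1}] \to A[1/x]$ is a map of $A$-algebras inducing an isomorphism on $\pi_*(A)[1/[x]]$ over $\pi_*(A)$, hence a stable equivalence. For the opposite composite $A[D(x)^{-1}] \to A[1/x] \to A[D(x)^{-1}]$, one takes an acyclic cofibration $A[D(x)^{-1}] \rightarrowtail P$ into a fibrant $A$-algebra $P$ and observes that both composites $D(x) \to \GLoneJof(P)$ correspond under the adjunction $(\mS^{\cJ}[-],\Omega^{\cJ})$ to two extensions of the same map along the acyclic cofibration $D(x) \rightarrowtail D(x)^{\gp}$ in the group completion model structure; the homotopy uniqueness of lifts then makes them homotopic. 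This last step is the main obstacle: it requires carefully tracking fibrancy conditions through the various adjunctions and model structures involved, paralleling the delicate final paragraph of the proof of Proposition~\ref{prop:localization-of-free}.
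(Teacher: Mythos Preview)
Your argument is correct, but the paper takes a shorter and structurally different route. Rather than rerunning the proof of Proposition~\ref{prop:localization-of-free} with $D(x)$ in place of $C(x)$, the paper uses that proposition as a black box and instead shows that the map $(A,C(x)) \to (A,D(x))$ induces a stable equivalence $A[C(x)^{-1}] \to A[D(x)^{-1}]$. Following the pattern of Lemma~\ref{lem:localization-and-logification}, this reduces to checking that the pushout $Q$ of $C(x)^{\gp} \leftarrow C(x) \to D(x)$ is grouplike; that in turn follows from a short $\pi_0$ computation using Lemma~\ref{lem:Cx-to-Dx_on-hJ}, since $\pi_0(C(x)_{h\cJ}) \to \pi_0(D(x)_{h\cJ})$ is an isomorphism. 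The paper's approach avoids re-tracking the fibrancy and adjunction bookkeeping you flagged as the main obstacle, and it makes the dependence on the relation between $C(x)$ and $D(x)$ explicit. Your approach, on the other hand, is self-contained in that it does not rely on already knowing $A[C(x)^{-1}] \simeq A[1/x]$; one minor point to tighten is that you should not literally set $D(x)^{\gp} = C(x)^{\gp}$ but rather use the functorial group completion $D(x)\rightarrowtail D(x)^{\gp}$ and extend the given map $D(x) \to C(x)^{\gp} \to \GLoneJof(A[1/x])$ over it via the lifting axiom.
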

\begin{proof}
  Since the localization of $(A,C(x))$ is stably equivalent to
  $A[1/x]$ by Proposition~\ref{prop:localization-of-free}, it is
  enough to show that $(A,C(x)) \to (A,D(x))$ induces a weak
  equivalence on localizations. Arguing as in the proof of
  Lemma~\ref{lem:localization-and-logification}, this reduces to
  showing that the pushout $Q$ of $C(x)^{\gp} \ot C(x) \to D(x)$
  is grouplike. We know that there is a surjection
  $\pi_0(C(x)^{\gp}_{h\cJ}) \times \pi_0(D(x)_{h\cJ}) \to
  \pi_0(Q_{h\cJ})$. The latter map induces a surjection
  $\pi_0(C(x)^{\gp}_{h\cJ}) \tensor_{\pi_0(C(x)_{h\cJ})}
  \pi_0(D(x)_{h\cJ}) \to \pi_0(Q_{h\cJ})$, whose domain is a group
  because $\pi_0((C(x) \to D(x))_{h\cJ})$ is an isomorphism by
  Lemma~\ref{lem:Cx-to-Dx_on-hJ}.
\end{proof}
\begin{proof}[Proof of Theorem~\ref{thm:all-pre-log-on-A}]
  Proposition~\ref{prop:localization-of-free} and
  Lemma~\ref{lem:localization-of-Dx} show that the localizations of
  $(A,C(x))$ and $(A,D(x))$ are stably equivalent to $A[1/x]$.
  Lemma~\ref{lem:logification-of-Dx} and
  Lemma~\ref{lem:logification-of-free} show that the vertical maps
  in~\eqref{eq:all-pre-log-on-A} are equivalences after
  logification. Since the two logifications are log-ring spectra
  $(A,M)$ with cofibrant $M$ by construction,
  Lemma~\ref{lem:localization-and-logification} implies that the
  localizations of $(A,C(x)\boxtimes \GLoneJof(A))$ and
  $(A,i_*\!\GLoneJof(A[1/x]))$ are equivalent to $A[1/x]$.
\end{proof}
\begin{remark}\label{rem:degree-zero-case}
  One may wonder if a homotopy class $[x] \in \pi_0(A)$ of degree $0$
  gives rise to graded pre-log structures with the same properties as
  in Theorem~\ref{thm:all-pre-log-on-A}. For such $[x]$, we can still
  pick a representing map $x$ and form $D(x)$ as in
  Construction~\ref{cons:graded-direct-pre-log-from-x}. However, the
  proof of Lemma~\ref{lem:logification-of-Dx} does not apply to this
  $D(x)$ because its components are not equivalent. Nevertheless, a
  similar argument as in Lemma~\ref{lem:logification-of-Dx} shows that
  the localization of $(A,D(x))$ is stably equivalent to $A[1/x]$.
\end{remark}

\subsection{Log structures on \texorpdfstring{$K$}{K}-theory spectra}
The connective complex $K$-theory spectrum $ku$ and its $p$-local or
$p$-complete counterparts $ku_{(p)}$ and $ku_{p}$ at a prime $p$ are
$E_{\infty}$ ring spectra (see e.g.~\cite{EKMM} or~\cite{Schwede_SymSp}). They
can hence be represented by positive fibrant commutative symmetric
ring spectra. The Bott class $u \in \pi_*(ku)\iso \mZ[u]$
(respectively $u\in\pi_*(ku_{(p)})\iso \mZ_{(p)}[u]$ or
$u\in\pi_*(ku_p)\iso \mZ_p[u]$) is a homotopy class of degree $2$, and
the corresponding periodic theories $KU$, $KU_{(p)}$, and $KU_p$ can
be obtained by inverting the Bott
class. Theorem~\ref{thm:all-pre-log-on-A} implies that the periodic
spectra give rise to direct image log structures on the connective
spectra whose localization is the periodic theory.

If $p$ is odd, the same applies to the $p$-local Adams summand $\ell$
of $ku_{(p)}$ and the $p$-complete Adams summand $\ell_p$ of $ku_{p}$.
Baker and Richter have shown that $\ell$ admits a unique $E_{\infty}$
structure~\cite[Corollary 1.4]{Baker-R_uniqueness} and that the
resulting $E_{\infty}$ structure on $\ell_p$ coincides with the one
considered for example in~\cite[\S 2.1]{Ausoni_THH_ku}.  Therefore,
these spectra may be represented by positive fibrant commutative
symmetric ring spectra. This time, $v_1\in \pi_*(\ell)\iso
\mZ_{(p)}[v_1]$ (resp. $v_1\in \pi_*(\ell_p)\iso \mZ_{p}[v_1]$) is a
homotopy class of degree $2p-2$. Inverting $v_1$ gives the periodic
theories $L$ and $L_p$, and we may apply
Theorem~\ref{thm:all-pre-log-on-A} as above.

By~\cite[\S 2.1]{Ausoni_THH_ku} and~\cite[Remark 9.4]{Baker-R_numerical},
the inclusions $\ell_p \to ku_{p}$ and $\ell \to ku_{(p)}$ can be
represented by maps of $E_{\infty}$ ring spectra and therefore by maps
of commutative symmetric ring spectra. Moreover, passing to fibrant
replacements we may assume that these maps are positive fibrations of
positive fibrant commutative symmetric ring spectra $\iota_{(p)}
\colon \ell \to ku_{(p)}$ and $\iota_{p} \colon \ell_p \to
ku_{p}$. Under these assumptions, the fact that the induced map of
homotopy groups send $v_1$ to $u^{p-1}$ implies
\begin{proposition}\label{prop:log-maps-ell-ku}
The homotopy classes $u\in\pi_2(ku_p)$ and
  $v_1\in \pi_{2p-2}(\ell_{p})$ admit representatives $u\colon S^3\! \to \!(ku_{p})_1$ and
  $v_1\colon\! S^{3(p-1)}\to\! (\ell_{p})_{p-1}$ such that $\iota_{p}$ induces the
  following commutative diagram of pre-log ring spectra:
\begin{equation}\label{eq:log-maps-ell-ku}
\xymatrix@-1.2pc{
(\ell_{p}, C(v_1)) \ar[r]\ar[d]& (\ell_{p}, D(v_1)) \ar[r]\ar[d]& (\ell_{p}, i_*\!\GLoneJof(L_{p})) \ar[r]\ar[d] & (L_{p}, \GLoneJof(L_{p})) \ar[d]\\ 
(ku_{p}, C(u)) \ar[r]& (ku_{p}, D(u)) \ar[r]& (ku_{p}, i_*\!\GLoneJof(KU_{p})) \ar[r] & (KU_{p}, \GLoneJof(KU_{p})) 
}
\end{equation}
The same holds in the $p$-local case.
\end{proposition}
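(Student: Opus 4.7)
The strategy is to first choose strictly compatible representatives $u$ and $v_1$, and then deduce the full diagram~\eqref{eq:log-maps-ell-ku} from the functoriality of the constructions $C(-)$, $D(-)$, and $i_*\!\GLoneJof(A[1/-])$ set up in the previous paragraphs. Once the compatibility $\iota_p v_1 = u^{p-1}$ is arranged strictly, every square will commute essentially by construction.

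First I would pick any based representative $\tilde u\colon S^3 \to (ku_p)_1$ of the Bott class and use the multiplication of $ku_p$ to form the $(p-1)$-fold product $\tilde u^{p-1}\colon S^{3(p-1)} \to (ku_p)_{p-1}$ via the iterated multiplication described in~\eqref{eq:mult-on-OmegaI}. Since $\iota_p$ is a positive fibration of positive fibrant commutative symmetric ring spectra and $p-1\geq 1$, the simplicial map $\iota_p\colon (\ell_p)_{p-1} \to (ku_p)_{p-1}$ is a Kan fibration between Kan complexes, so the induced map $\Omega^{3(p-1)}(\iota_p)$ is a Kan fibration of Kan complexes as well. Because $\iota_p$ sends $v_1$ to $u^{p-1}$ on homotopy groups, the class $[\tilde u^{p-1}]$ lies in the image of $\iota_{p,*}$, so the $0$-simplex $\tilde u^{p-1}$ admits a strict lift $v_1\colon S^{3(p-1)} \to (\ell_p)_{p-1}$ along $\Omega^{3(p-1)}(\iota_p)$. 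Setting $u:=\tilde u$, the resulting $v_1$ represents the Adams class (since $\iota_{p,*}$ is an isomorphism in degree $2(p-1)$) and satisfies $\iota_p\circ v_1 = u^{p-1}$ on the nose.

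With these representatives, the leftmost square in~\eqref{eq:log-maps-ell-ku} is defined by the map $C(v_1)\to C(u)$ of commutative $\cJ$-space monoids sending the generator of $C(v_1)$ in bidegree $(\bld{p-1},\bld{3(p-1)})$ to the canonical $(p-1)$-fold product of the generator of $C(u)$; the identity $\iota_p v_1 = u^{p-1}$ makes the square over $\Omega^{\cJ}(\iota_p)$ commute strictly. For the remaining squares, Corollary~\ref{cor:universal-property-inverting-element} supplies an extension of $\ell_p \to ku_p \to KU_p$ over $\ell_p \to L_p = \ell_p[1/v_1]$, producing a commutative square in $\cC\Spsym$ relating $\iota_p$ with a map $L_p\to KU_p$. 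The construction of $D(x)$ in Construction~\ref{cons:graded-direct-pre-log-from-x} (a functorial group completion $C(x)\to C(x)^{\gp}$, the pullback defining $D'(x)$, and a functorial cofibrant replacement), the pullback $i_*\!\GLoneJof(A[1/x])$, and the canonical map $(A,i_*\!\GLoneJof(A[1/x]))\to(A[1/x],\GLoneJof(A[1/x]))$ are all natural in the data $(A,x,A[1/x])$ once we fix functorial factorizations. Assembling them over the map of triples $(\ell_p,v_1,L_p)\to(ku_p,u,KU_p)$ yields the remaining three squares of~\eqref{eq:log-maps-ell-ku}. The $p$-local case is entirely analogous, with $\iota_{(p)}\colon\ell\to ku_{(p)}$ in place of $\iota_p$ and $L, KU_{(p)}$ in place of $L_p, KU_p$.

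The only step that is not purely formal is producing the strictly compatible representatives; once this is done, commutativity of the other squares reduces to naturality of the pullbacks, group completions, and cofibrant replacements used throughout Section~\ref{sec:direct-image-log}. The potential subtlety lies in the bookkeeping of which choices are strictly natural — but since the positive $\cJ$-model structure on $\cC\cS^{\cJ}$ and the positive stable model structure on $\cC\Spsym$ both admit functorial factorizations, this poses no difficulty.
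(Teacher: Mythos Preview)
Your proof is correct and follows essentially the same route as the paper: choose $u$ first, use the Kan fibration property of $\Omega^{3(p-1)}(\iota_p)$ (i.e.\ path lifting) to produce a strict lift $v_1$ with $\iota_p v_1 = u^{p-1}$, invoke Corollary~\ref{cor:universal-property-inverting-element} for the square relating $L_p$ and $KU_p$, and then propagate through the constructions. The only noteworthy difference is in how the map $C(v_1)^{\gp}\to C(u)^{\gp}$ is obtained: the paper builds it by the lifting axiom in the group completion model structure (acyclic cofibration against fibration in the square with $\GLoneJof(L_p)\to\GLoneJof(KU_p)$), whereas you appeal to functoriality of the factorization used in Construction~\ref{cons:graded-direct-pre-log-from-x}. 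Both are valid; your version leans on the remark after that construction which explicitly notes the factorization is functorial, so it is arguably cleaner, while the paper's lifting argument makes the dependence on the group completion model structure more visible.
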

To ease notation, we have started to use the same symbols for the homotopy classes and
their representatives.
\begin{proof}
  Two arbitrary representatives $v_1'\colon S^{3(p-1)}\to
  (\ell_{p})_{p-1}$ and $u\colon S^{3} \to (ku_{p})_1$ of $v_1$ and
  $u$ may be viewed as points in
  $\Omega^{\cJ}(ku_{p})(\bld{1},\bld{3})$ and
  $\Omega^{\cJ}(\ell_{p})(\bld{p-1},\bld{3(p-1)})$. They have the
  property that the image of $v_1'$ under $(\iota_{p}\colon \ell_{p}
  \to ku_{p})_*$ lies in the same component of
  $\Omega^{\cJ}(ku_{p})(\bld{p-1},\bld{3(p-1)})$ as $u^{p-1}$. Since
  we assumed $\iota_p\colon \ell_p \to ku_p$ to be a positive
  fibration, we can use the path lifting property of the Kan
  fibration \[\Omega^{\cJ}(\ell_{p})(\bld{p-1},\bld{3(p-1)}) \to
  \Omega^{\cJ}(ku_{p})(\bld{p-1},\bld{3(p-1)})\] to get a
  representative $v_1\colon S^{3(p-1)}\to (\ell_{p})_{p-1}$ hitting
  $u^{p-1}$. With \mbox{$KU_p=ku_p[1/u]$} and $L_p=\ell_p[1/v_1]$ as models
  for the periodic spectra,
  Corollary~\ref{cor:universal-property-inverting-element} shows that
  $\ell_p \to ku_p$ extends to a commutative square
  \[\xymatrix@-1pc{
    \ell_p \ar[r] \ar[d] & L_p \ar[d] \\
    ku_p \ar[r] & KU_p.  } \] So the right hand square
  in~\eqref{eq:log-maps-ell-ku} commutes. The relation
  $(\iota_p)_*(v_1) = u^{p-1}$ provides the commutativity of the outer
  square in the diagram
  \[\xymatrix@-1pc{
    C(v_1) \ar@{>->}[r]^-{\sim}\ar[d] & C(v_1)^{\gp}
    \ar@{->>}[r]\ar@{-->}[d]
    & \GLoneJof(L_{p}) \ar[r]\ar[d] & \Omega^{\cJ}(L_{p}) \ar[d]\\
    C(u) \ar@{>->}[r]^-{\sim} & C(u)^{\gp} \ar@{->>}[r] &
    \GLoneJof(KU_{p}) \ar[r] & \Omega^{\cJ}(KU_{p}).  }\] in which the
  left hand vertical map sends the generator in $C(v_1)$ to the
  $(p-1)$-fold power of the generator in $C(u)$. The indicated acyclic
  cofibrations and fibrations in the diagram refer to the group
  completion model structure, and the lifting axiom in this model
  structure provides the dotted map. Passing to the pullbacks defining
  the direct image pre-log and log structures gives the commutativity
  of the diagram in the statement of the lemma. The $p$-local case
  works analogously.
\end{proof}
\begin{remark}
  The previous proposition exhibits another advantage of graded log
  structures: As explained in~\cite[Remark 7.19]{Rognes_TLS}, the free
  $\cI$-space pre-log structures on $\ell$ and $ku_{(p)}$ do not allow
  to extend $\iota_{(p)}$ to a map of pre-log ring
  spectra. In~\cite{Rognes_TLS}, Rognes develops a theory of
  \emph{based} $\cI$-space log structures in order to make this
  possible. Because of the proposition, we do not need to address a
  based version of graded log structures for this purpose.
\end{remark}

Using the adjunction $(\mS^{\cJ}[-], \Omega^{\cJ})$, the map
$(\ell_{p}, D(v_1)) \to (ku_{p}, D(u))$ of
Proposition~\ref{prop:log-maps-ell-ku} induces a commutative square in
$\cC\Spsym$:
\begin{equation}\label{eq:ku-p-as-hty-pushout}
\xymatrix@-1pc{\mS^{\cJ}[D(v_1)] \ar[r]\ar[d] & \ell_{p} \ar[d] \\
\mS^{\cJ}[D(u)]\ar[r]&ku_{p}
}
\end{equation}
The following proposition will become crucial in
Section~\ref{sec:log-etale-maps}:
\begin{proposition}\label{prop:ku-p-as-hty-pushout}
  The square~\eqref{eq:ku-p-as-hty-pushout} is a homotopy cocartesian
  square of commutative symmetric ring spectra, and the same holds in
  the $p$-local case.
\end{proposition}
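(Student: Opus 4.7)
My plan is to show that the canonical map from the homotopy pushout $P := \mS^{\cJ}[D(u)] \sm^L_{\mS^{\cJ}[D(v_1)]} \ell_p$ to $ku_p$ is a stable equivalence by analyzing both sides as $\ell_p$-module spectra. The key input is the decomposition of $D(x)$ established in the proof of Lemma~\ref{lem:logification-of-Dx}: the natural map
\[
\coprod_{k \geq 0} D(x)_0 \boxtimes \cJ((\bld{n_1}, \bld{n_2}), -)^{\boxtimes k} \xrightarrow{\sim} D(x)
\]
is a $\cJ$-equivalence, exhibiting $D(x)$ as a sort of ``graded polynomial algebra on $x$'' over the degree-zero piece $D(x)_0$. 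Via the symmetric monoidal Yoneda identification $\cJ((\bld{p-1}, \bld{3(p-1)}), -) \cong \cJ((\bld{1}, \bld{3}), -)^{\boxtimes(p-1)}$, the map $D(v_1) \to D(u)$ respects these decompositions, carrying the $k$-th summand of $D(v_1)$ into the $(p-1)k$-th summand of $D(u)$ via the canonical map $D(v_1)_0 \to D(u)_0$ on degree-zero parts.

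Applying $\mS^{\cJ}[-]$ and using that $\mS^{\cJ}[\cJ((\bld{m_1}, \bld{m_2}), -)] \simeq \Sigma^{m_2 - m_1}\mS$ as symmetric spectra, this decomposition implies that $\mS^{\cJ}[D(u)]$ splits as an $\mS^{\cJ}[D(v_1)]$-module into $p-1$ pieces indexed by the cosets of $(p-1)\mZ$ inside $\mZ$, each contributing a shift by $2k$ for $k = 0, \ldots, p-2$. After base change along $\mS^{\cJ}[D(v_1)] \to \ell_p$, the degree-zero input $\mS^{\cJ}[D(v_1)_0]$ gets absorbed into $\ell_p$, using that $D(v_1)_0 \to D(u)_0$ reflects the identity $\pi_0(\ell_p) = \mZ_p = \pi_0(ku_p)$ on the unit-bearing degree-zero information. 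This should yield a splitting
\[
P \simeq \bigvee_{k=0}^{p-2} \Sigma^{2k} \ell_p
\]
as $\ell_p$-modules.

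The canonical map $P \to ku_p$ would then be identified with the classical $\ell_p$-module splitting of $ku_p$ into the $p-1$ Bott summands $\ell_p \cdot u^k$, and hence is a stable equivalence. The $p$-local case is handled identically, since the same decomposition and identification of degree-zero units work for $ku_{(p)}$ and $\ell$.

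The main obstacle will be making the absorption of the degree-zero parts precise. Concretely, $D(x)_0$ is given by the pullback $C(x)^{\gp}_0 \times_{\Omega^{\cJ}(A[1/x])_0} \Omega^{\cJ}(A)_0$, and one must check that after smashing with $\ell_p$ over $\mS^{\cJ}[D(v_1)_0]$, the resulting $\ell_p$-module structure on the $(p-1)k$-th summand is the desired shift of $\ell_p$, with no residual contribution from the group completion $C(x)^{\gp}_0$. This requires carefully tracking how the group completion built into $D(x)$ interacts with the multiplicative structure of $\mS^{\cJ}[-]$ and with the fact that $\ell_p \to ku_p$ is the identity on $\pi_0$, which is exactly where the $D$-version of the pre-log structure (as opposed to the free $C$-version discussed in the remark after Lemma~\ref{lem:logification-of-Dx}) plays an essential role.
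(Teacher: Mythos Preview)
Your overall strategy is close to the paper's, and the decomposition from Lemma~\ref{lem:logification-of-Dx} is indeed the right input. However, the step you flag as the ``main obstacle'' is a genuine gap, and the intuition you offer for resolving it is misleading.

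You write that $D(v_1)_0 \to D(u)_0$ ``reflects the identity $\pi_0(\ell_p) = \mZ_p = \pi_0(ku_p)$.'' It does not. By Lemma~\ref{lem:Cx-to-Dx_on-hJ}, both $D(v_1)_{h\cJ}$ and $D(u)_{h\cJ}$ are equivalent to $(QS^0)_{\geq 0}$, and the map between them is multiplication by $p-1$ in the infinite loop structure. On the degree-zero pieces this restricts to the $(p-1)$-fold power map $Q_0S^0 \to Q_0S^0$, which is \emph{not} an equivalence; it is only a $\pi_i(-)\otimes\mZ_{(p)}$-isomorphism for $i\geq 1$. So there is no ``absorption'' for free: one has to prove that $\mS^{\cJ}$ applied to this map becomes an equivalence after smashing with the $p$-local (or $p$-complete) $\ell_p$. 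That is the substance of the proof, and nothing in your outline supplies it.

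The paper organizes this slightly differently, which avoids having to analyze $D(v_1)_0 \to D(u)_0$ directly. It builds the $D(v_1)$-module $E=\coprod_{0\leq i\leq p-2}\cJ((\bld{1},\bld{3}),-)^{\boxtimes i}\boxtimes D(v_1)$, so that $\mS^{\cJ}[E]\sm_{\mS^{\cJ}[D(v_1)]}\ell_p \simeq \bigvee_{i=0}^{p-2}\Sigma^{2i}\ell_p$ is transparent and the composite map to $ku_p$ is the classical splitting. The remaining work is to show that a fibrant replacement $E'\to D(u)$ becomes an equivalence after $\mS^{\cJ}[-]\sm_{\mS^{\cJ}[D(v_1)]}\ell_p$. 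This is exactly your missing step in disguise: one argues that $E'(\bld{n_1},\bld{n_2})\to D(u)(\bld{n_1},\bld{n_2})$ is a $\pi_i(-)\otimes\mZ_{(p)}$-isomorphism for $i\geq 1$ and a $\pi_0$-isomorphism, that the components are simple (hence nilpotent), that this is therefore an $H_*(-;\mZ_{(p)})$-equivalence, and then uses a Cartan--Leray and assembly-map argument to conclude that $\mS^{\cJ}[E']\to\mS^{\cJ}[D(u)]$ is an $H\mZ_{(p)}$-local, hence $p$-local, equivalence of connective spectra. Smashing with the $p$-local $\ell_p$ then gives a stable equivalence. Your proposal needs precisely this $p$-local argument, and the identification $\pi_0(\ell_p)=\pi_0(ku_p)$ does not substitute for it.
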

As explained in~\cite[Example 12.16 and Example 12.17]{Rognes_TLS},
the counterpart of this statement for the based  $\cI$-space pre-log
structures on these spectra does not hold. It will also become clear
from the proof that this does not hold if we replace $D(v_1)$ and
$D(u)$ by the free graded log structures $C(v_1)$ and $C(u)$ on these
spectra.
\begin{proof}
  Without loss of generality, we assume that $D(v_1) \to D(u)$ is a
  cofibration of commutative $\cJ$-space monoids. In the sequel, we
  view $D(u)$ as a $D(v_1)$-module via $D(v_1) \to D(u)$ and consider
  the $D(v_1)$-module
  \[ E = \textstyle\coprod_{0\leq i\leq p-2}
  {\cJ}((\bld{1},\bld{3}),-)^{\boxtimes i} \boxtimes D(v_1).
  \]
  Composing the canonical maps $
  {\cJ}((\bld{1},\bld{3}),-)^{\boxtimes i} \to C(u)$ with $C(u) \to
  D(u)$, we obtain an induced map of $D(v_1)$-modules $E \to
  D(u)$. Since $D(u)$ is positive fibrant, the choice of a
  factorization $\xymatrix@1{E\, \ar@{>->}[r]^{\sim}& \,E'\,\ar@{->>}[r]&  \,D(u)}$ into an acyclic cofibration
  followed by a fibration in the positive $\cJ$-model structure on
  $D(v_1)$-modules provides a fibrant replacement $E'$ of
  $E$. Applying the functor $\mS^{\cJ}[-]\sm_{\mS^{\cJ}[D(v_1)]} \ell_p$
  from $D(v_1)$-modules to $\ell_p$-modules, we obtain a sequence of
  maps
  \begin{equation}\label{eq:ku-decomposition}
    \mS^{\cJ}[E]\sm_{\mS^{\cJ}[D(v_1)]} \ell_p \to \mS^{\cJ}[E']\sm_{\mS^{\cJ}[D(v_1)]} \ell_p 
    \to \mS^{\cJ}[D(u)]\sm_{\mS^{\cJ}[D(v_1)]} \ell_p
    \to ku_{p}\end{equation}
  Our cofibrancy assumptions imply that the smash products coincide
  with the derived smash products.  The claim of the proposition is
  that the last map in~\eqref{eq:ku-decomposition} is a stable equivalence, and it is enough to show
  that the first two maps and the composition of all three maps
  in~\eqref{eq:ku-decomposition} are stable equivalences.  The first
  map is a stable equivalence because $E \to E'$ is an acyclic
  cofibration, and these are preserved by
  $\mS^{\cJ}[-]\sm_{\mS^{\cJ}[D(v_1)]} \ell_p$.  Using~\cite[Lemma
  14.3]{Sagave-S_diagram}, the composite map can be identified with the map
  \[ \textstyle\coprod_{0\leq i\leq p-2} \Sigma^{2i}\ell_{p} \simeq
  \textstyle\coprod_{0\leq i\leq p-2}\ell_{p} \sm (F_1S^3)^{\sm i} \to
  ku_{p}\] induced by multiplication with iterated powers of the map
  $F_1S^3 \to ku_{p}$ from the free symmetric spectrum $F_1S^3 \simeq
  \Sigma^2 \mS$ specified by $u$. It is a stable equivalence since
  $\pi_*(\iota_{p})$ sends $v_1$ to $u^{p-1}$.

  It remains to verify that the middle map
  in~\eqref{eq:ku-decomposition} is a stable equivalence. By
  construction and Lemma~\ref{lem:Cx-to-Dx_on-hJ}, the map $Q_{\geq
    0}S^0 \simeq D(v_1)_{h\cJ} \to D(u)_{h\cJ}\simeq Q_{\geq 0}S^0$ is
  multiplication with $(p-1)$. Since the multiplication with the image
  of the generator $\id_{(\bld{3},\bld{1})}$ of $C(u)$ in $D(u)$ is a
  weak equivalence, it follows that $(E')_{h\cJ} \to D(u)_{h\cJ}$ is a
  $\pi_0$-isomorphism and a $\pi_i(-)\tensor \mZ_{(p)}$-isomorphism
  for $i\geq 1$.  Using the homotopy fiber sequence of~\cite[Lemma
  4.2]{Sagave-S_diagram} and the fact that $E'$ and $D(u)$ are
  positive fibrant, a five lemma argument implies that for all objects
  $(\bld{n_1},\bld{n_2})$ of $\cJ$ with $n_1 \geq 1$, 
  \begin{equation}\label{eq:Eprime-Du-components}
    E'(\bld{n_1},\bld{n_2}) \to  D(u)(\bld{n_1},\bld{n_2})
  \end{equation}
  is a $\pi_0$-isomorphism and a $\pi_i(-)\tensor
  \mZ_{(p)}$-isomorphism for $i\geq 1$.  Since the map $D(u)_{h\cJ} \to B\cJ$
  is a map of associative simplicial monoids, the
  non-empty homotopy cofibers have the homotopy types of 
  simplicial monoids. Hence the components of the
  $D(u)(\bld{n_1},\bld{n_2})$ for $n_1 \geq 1$ are simple spaces. The
  same argument applies to $D(v_1)_{h\cJ}\to B\cJ$, and the commutative
  diagram
  \[\xymatrix@-1pc{
    D(v_1)_{h\cJ} \ar[d]& \ar[l]^-{\sim}_-{\textrm{pr}}
    (\cJ((\bld{1},\bld{3}),-)^{\boxtimes i})_{h\cJ} \times
    D(v_1)_{h\cJ} \ar[r]^-{\sim}\ar[d] &
    (\cJ((\bld{1},\bld{3}),-)^{\boxtimes i}\boxtimes D(v_1))_{h\cJ} \ar[d]\\
    B\cJ & \ar[l]^-{\sim}_-{\textrm{pr}} (\cJ((\bld{1},\bld{3}),-)^{\boxtimes
      i})_{h\cJ} \times B\cJ \ar[r]^-{\sim} & B\cJ }
  \] 
  implies that the components of $E'(\bld{n_1},\bld{n_2})$ for
  $n_1\geq 1$ are simple spaces. It follows that the
  map~\eqref{eq:Eprime-Du-components} is a map of spaces with
  nilpotent path components and hence a
  $H_*(-;\mZ_{(p)})$-equivalence.

  The cofibrancy assumptions on $D(v_1)$ and $D(u)$ imply that the
  $\Sigma_{n_2}$-actions on $E'(\bld{n_1},\bld{n_2})$ and
  $D(u)(\bld{n_1},\bld{n_2})$ are free for all $n_2 \geq 1$
  (see~\cite[Appendix A]{Rognes-S-S_logTHH} for details). Hence the
  explicit description of $\mS^{\cJ}[-]$
  in~\cite[(4.5)]{Sagave-S_diagram} and an application of the
  Cartan-Leray spectral sequence show that $\mS^{\cJ}[E'] \to
  \mS^{\cJ}[D(u)]$ is a $H_*(-;\mZ_{(p)})$-isomorphism in positive
  levels. Since the assembly map $X \sm H\mZ_{(p)} \to \mZ_{(p)}[X]$ is a
  $\pi_*$-isomorphism for symmetric spectra $X$~\cite[II.\S
  6]{Schwede_SymSp}, it follows that the map 
  \[\mS^{\cJ}[E']\sm H\mZ_{(p)} \to
  \mS^{\cJ}[D(u)]\sm H\mZ_{(p)}\] is a stable equivalence. Our
  cofibrancy assumptions on $D(u)$ and $D(v_1)$ imply that the
  symmetric spectra $\mS^{\cJ}[E'] $ and $ \mS^{\cJ}[D(u)]$ are
  flat. Hence the above smash products have the homotopy types of the
  derived smash products, and it follows that $\mS^{\cJ}[E'] \to
  \mS^{\cJ}[D(u)]$ is a $H\mZ_{(p)}$-local equivalence. Because $E'$
  and $D(u)$ are concentrated in non-negative $\cJ$-space degrees, a
  cell induction argument shows that the spectra $\mS^{\cJ}[E']$ and
  $\mS^{\cJ}[D(u)] $ are connective. It follows that $\mS^{\cJ}[E']
  \to \mS^{\cJ}[D(u)]$ is a $p$-local equivalence. After cobase change
  along $\mS^{\cJ}[D(v_1)] \to \ell_p$, it becomes a $p$-local
  equivalence of $\ell_p$-modules and hence a stable equivalence.  The
  same arguments apply in the $p$-local case.
\end{proof}
\begin{remark}
  The $p$-complete complex $K$-theory spectrum $ku_p$ admits other
  interesting graded pre-log structures. Let $p\colon S^1 \to
  (ku_p)_1$ and $u\colon S^3 \to (ku_p)_1$ represent the homotopy
  classes $p\in\pi_0(ku_p)$ and $u\in\pi_2(ku_p)$. As explained in
  Remark~\ref{rem:degree-zero-case}, we get a pre-log structure $D(p)$
  even though $p$ has degree $0$. Using that $\boxtimes$ is the
  coproduct in $\cC\cS^{\cJ}$, we obtain a pre-log structure
  $D(p)\boxtimes D(u)\to \Omega^{\cJ}(ku_p)$ on $ku_p$.  The resulting
  pre-log ring spectrum $(ku_p, D(p)\boxtimes D(u))$ has a canonical
  map to $ku_p[1/p,1/u]$ with its trivial log structure.  In view of
  Rognes' discussion in~\cite[\S 1.9]{Rognes_TLS}, $(ku_p,
  D(p)\boxtimes D(u))$ is a candidate for a hypothetical object known
  as the \emph{fraction field of topological $K$-theory}. Its
  existence is supported by algebraic $K$-theory computations of
  Ausoni-Rognes~\cite{Ausoni-R_fraction-field}. This example
  emphasizes that it is potentially interesting to develop an algebraic
  $K$-theory of graded log ring spectra. We intend to pursue this in a
  later paper.
\end{remark}

\section{Logarithmic topological Andr\'{e}-Quillen homology}\label{sec:log-taq} 
In this section we define graded log derivations and graded log
topological Andr\'{e}-Quillen homology. To a large extent, this works
analogous to the log $\TAQ$ for $\cI$-space log ring spectra developed
by Rognes in~\cite[\S 10 and \S 11]{Rognes_TLS}. Thus our presentation
may also serve as a review of Rognes constructions. However, there are
also some differences: The fact that commutative $\cJ$-space monoids
do not admit a zero object requires some extra care when dealing with
square zero extensions, derivations, and their corepresenting
objects. To make this section more readable, we have deferred the
proof of the necessary technical results about commutative $\cJ$-space
monoids to Section~\ref{sec:CSJ-derivations}.

\subsection{Logarithmic derivations}\label{subsec:log-derivations}
Throughout this section let $A$ be a positive fibrant commutative
symmetric ring spectrum and let $X$ be a left $A$-module spectrum.

Because $A$ is commutative, we may also view $X$ as a right
$A$-module.  The \emph{square zero extension} of $A$ by $X$ is the
commutative $A$-algebra $A\wdg X$ with multiplication
\[ (A\wdg X) \sm (A\wdg X) \iso (A\sm A) \wdg (A\sm X) \wdg (X \sm A) \wdg (X
\sm X) \to A \wdg X\] induced by $A\sm A \to A$, the (left
and right) $A$-module structure on $X$, and the trivial map $X \sm X
\to *$. The map $X \to *$ induces an augmentation $A\wdg X \to A$.

Since $A\wdg X$ is clearly not fibrant and we will often consider maps
into it, we fix a notation for a fibrant replacement:
\begin{definition}
  We write $A\wdg_{\!f\!}X$ for the positive fibrant replacement of
  $A\wdg X$ in commutative symmetric ring spectra which is defined by the
  factorization
  \[ \xymatrix@1{A\wdg X \; \ar@{>->}^{\sim}[r]& \; A\wdgfib X\;
    \ar@{->>}[r] &\; A}. \] Its augmentation is denoted by
  $\varepsilon_X \colon A\wdgfib X \to A$.
\end{definition}

Recall that $U^{\cJ}=\cJ((\bld{0},\bld{0}),-)$ denotes the monoidal
unit of $(\cS^{\cJ},\boxtimes)$. It is the initial commutative
$\cJ$-space monoid.

\begin{definition}\label{def:one-plus-X-J}
Let the commutative $\cJ$-space monoid $(1+X)^{\cJ}$ be a cofibrant
replacement of the pullback of 
\[ \xymatrix@1{ U^{\cJ} \ar[r]& \GLoneJof(A) & \GLoneJof(A\wdgfib X)
  \ar[l]_-{(\varepsilon_X)_*}}.\]
\end{definition}
The notation is chosen in analogy to the subgroup of elements of the
form $1+x$ in the units of the square zero extension of an ordinary
ring by a module $X$. Since $(1+X)^{\cJ}$ is augmented over $U^{\cJ}$,
it is concentrated in $\cJ$-space degree $0$ because $U^{\cJ}$ is. In
other words, $(1+X)^{\cJ}(\bld{m_1},\bld{m_2})=\emptyset$ unless
$m_2-m_1 = 0$. Although this may look wrong at the first sight, the
following results show that $(1+X)^{\cJ}$ captures the desired portion
of the units of the square zero extension.

Appealing again to the situation in ordinary algebra, we recall that
the units of a square zero decompose into the units of the ring and
the additive group of the module. The following two results about $
(1+X)^{\cJ}$ may be viewed as a homotopical counterpart of this.

Since $\boxtimes$ is the coproduct in $\cC\cS^{\cJ}$, we get a
canonical map
\[
\xymatrix@1{\GLoneJof(A) \boxtimes (1+X)^{\cJ} \to \GLoneJof(A\wdgfib X)}.
\]
\begin{lemma}\label{lem:GLoneJof_AwdgX_decomposition}
This map is a $\cJ$-equivalence. 
\end{lemma}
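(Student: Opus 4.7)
The strategy is to apply $(-)_{h\cJ}$ and identify the resulting map as the trivialization of a split fiber sequence of grouplike simplicial monoids. First I would observe that the augmentation $\varepsilon_X \colon A \wdgfib X \to A$ is a positive fibration of positive fibrant commutative symmetric ring spectra, and that the canonical map $A \to A \wdg X$ onto the first summand extends (by a lifting argument in the positive stable model structure) to a section $s \colon A \to A \wdgfib X$ of $\varepsilon_X$. Applying $\GLoneJof$ yields a positive $\cJ$-fibration $\GLoneJof(\varepsilon_X)$ with a section $\GLoneJof(s)$.

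Next, because $\cC\cS^{\cJ}$ is right proper and $\GLoneJof(\varepsilon_X)$ is a positive $\cJ$-fibration, the pullback diagram defining $(1+X)^{\cJ}$ is a homotopy pullback in $\cS^{\cJ}$. Since $U^{\cJ} = \cJ((\bld{0},\bld{0}),-)$ is a free $\cJ$-space with $U^{\cJ}_{h\cJ} \simeq *$, the map $U^{\cJ} \to \GLoneJof(A)$ is (up to $\cJ$-equivalence) the inclusion of the unit component. Combining this with a level-wise analysis at objects $(\bld{m_1},\bld{m_1})$ with $m_1 \geq 1$ (where the pullback becomes, up to homotopy, the product of the discrete set $U^{\cJ}(\bld{m_1},\bld{m_1})$ with the homotopy fiber of the fibration $\GLoneJof(A \wdgfib X)(\bld{m_1},\bld{m_1}) \to \GLoneJof(A)(\bld{m_1},\bld{m_1})$ over the unit), I would conclude that taking $(-)_{h\cJ}$ produces a homotopy fiber sequence of spaces
\[
(1+X)^{\cJ}_{h\cJ} \longrightarrow \GLoneJof(A \wdgfib X)_{h\cJ} \longrightarrow \GLoneJof(A)_{h\cJ}
\]
split by $\GLoneJof(s)_{h\cJ}$.

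Finally, the map in question is induced, via the universal property of $\boxtimes$ as the coproduct in $\cC\cS^{\cJ}$, by $\GLoneJof(s)$ on the first factor and the canonical map $(1+X)^{\cJ} \to \GLoneJof(A \wdgfib X)$ on the second, combined via the multiplication of $\GLoneJof(A \wdgfib X)$. Replacing the factors by cofibrant (hence flat) objects and applying Lemma~\ref{lem:monoidal-structure-map-equiv}, the map becomes
\[
\GLoneJof(A)_{h\cJ} \times (1+X)^{\cJ}_{h\cJ} \longrightarrow \GLoneJof(A \wdgfib X)_{h\cJ},
\qquad (b,f) \longmapsto \GLoneJof(s)_{h\cJ}(b) \cdot f
\]
on $h\cJ$-colimits. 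Since graded units are grouplike by definition, both sides are grouplike simplicial monoids, and the standard fact that a split fibration $F \to E \to B$ of grouplike simplicial monoids admits a weak equivalence $B \times F \to E$ via $(b,f) \mapsto s(b) \cdot f$ completes the argument.

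The main obstacle I anticipate is the second step, namely verifying that $(-)_{h\cJ}$ preserves the homotopy pullback defining $(1+X)^{\cJ}$ in sufficient strength to deliver an honest fiber sequence on $h\cJ$. This does not follow from abstract nonsense (since $(-)_{h\cJ}$ does not preserve arbitrary homotopy pullbacks), but should be accessible either by the level-wise argument sketched above, using the homotopy cofinality of the full subcategory of $\cJ$ on objects with $m_1 \geq 1$, or by an argument with the $\gamma$ functor~\eqref{eq:functor-gamma} exploiting that all objects in sight are grouplike so that the fiber sequence can be detected in $\Gamma$-spaces.
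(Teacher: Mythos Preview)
Your approach is essentially the same as the paper's: reduce to $(-)_{h\cJ}$, exhibit both source and target as sitting in compatible homotopy fiber sequences over $\GLoneJof(A)_{h\cJ}$ with fiber $((1+X)^{\cJ})_{h\cJ}$, and conclude. Two remarks on how the paper streamlines this. First, the obstacle you flag in the second step is dispatched in one line by citing \cite[Corollary~11.4]{Sagave-S_diagram}, which says precisely that homotopy pullbacks in $\cS^{\cJ}$ are detected by $(-)_{h\cJ}$; no level-wise analysis or $\gamma$-functor argument is needed. Second, the paper does not introduce the section $s$ at all: once both rows are homotopy fiber sequences of grouplike simplicial monoids with equivalent fiber and base, the long exact sequence of homotopy groups gives the middle equivalence directly, so the explicit $(b,f)\mapsto s(b)\cdot f$ description is unnecessary. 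Also note that $(1+X)^{\cJ}$ is already cofibrant (hence flat) by Definition~\ref{def:one-plus-X-J}, so no further cofibrant replacement is required before invoking Lemma~\ref{lem:monoidal-structure-map-equiv}.
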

\begin{proof}
Applying $(-)_{h\cJ}$, the induced map fits into commutative square
\[\xymatrix@-1pc{
  ((1+X)^{\cJ})_{h\cJ} \ar[r] \ar[d] & (\GLoneJof(A) \boxtimes (1+X)^{\cJ})_{h\cJ} \ar[r] \ar[d] & \GLoneJof(A)_{h\cJ}\ar[d] \\
  ((1+X)^{\cJ})_{h\cJ} \ar[r] & \GLoneJof(A\wdgfib X)_{h\cJ} \ar[r]&
  \GLoneJof(A)_{h\cJ}}.\] The lower sequence is a homotopy fiber sequence
by~\cite[Corollary~11.4]{Sagave-S_diagram}, and
Lemma~\ref{lem:monoidal-structure-map-equiv} implies that the upper
map is a homotopy fiber sequence as well. Hence the claim follows from
the long exact sequence of homotopy groups since all spaces involved
are grouplike simplicial monoids.
\end{proof}
The underlying spectrum of a module spectrum plays the role of the
additive group of the module spectrum. We will prove the following
proposition about its relation to $(1+X)^{\cJ}$ in
Section~\ref{sec:CSJ-derivations}:
\begin{proposition}\label{prop:identification_1plusX-J}
  The spectrum associated with the $\Gamma$-space
  $\gamma((1+X)^{\cJ})$ is stably equivalent to the connective cover
  of the underlying spectrum of the $A$-module $X$.
\end{proposition}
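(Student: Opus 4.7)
The plan is to realize the spectrum associated with $\gamma((1+X)^{\cJ})$ as the homotopy fiber of the map of spectra of graded units $\gloneJof(\varepsilon_X)\colon \gloneJof(A\wdgfib X) \to \gloneJof(A)$, and then identify this fiber with $X_{\geq 0}$ via the classical computation of units of a square zero extension.

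First, I would check that $(1+X)^{\cJ}$ is grouplike. The space-level homotopy fiber sequence provided by \cite[Corollary~11.4]{Sagave-S_diagram} (already invoked in the proof of Lemma~\ref{lem:GLoneJof_AwdgX_decomposition}),
\[
 ((1+X)^{\cJ})_{h\cJ} \to \GLoneJof(A\wdgfib X)_{h\cJ} \to \GLoneJof(A)_{h\cJ},
\]
realizes $((1+X)^{\cJ})_{h\cJ}$ as the homotopy fiber of a map between grouplike simplicial monoids, hence grouplike. Consequently $\gamma((1+X)^{\cJ})$ is very special, and its associated spectrum is connective with underlying infinite loop space $((1+X)^{\cJ})_{h\cJ}$.

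Next, I would construct a natural map of connective spectra from $\gamma((1+X)^{\cJ})$ to the fiber $\fib\bigl(\gloneJof(\varepsilon_X)\bigr)$ by applying $\gamma$ to the defining pullback: the composite $(1+X)^{\cJ} \to \GLoneJof(A\wdgfib X) \to \GLoneJof(A)$ factors through $U^{\cJ} \to \GLoneJof(A)$, so the induced composite of $\Gamma$-spaces after $\gamma$ lifts to the homotopy fiber. Taking $\Omega^{\infty}$, this lift recovers the identification from the space-level fiber sequence above, so it induces isomorphisms on all non-negative homotopy groups; since both source and target are connective, it is a stable equivalence.

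Finally---and this is the main obstacle---I would identify $\fib\bigl(\gloneJof(\varepsilon_X)\bigr)$ with $X_{\geq 0}$. The augmentation section $A\hookrightarrow A\wdgfib X$ splits $\gloneJof(\varepsilon_X)$, so this fiber is a direct summand of $\gloneJof(A\wdgfib X)$. The classical identification $1+x\leftrightarrow x$, valid because $xy=0$ in the square zero extension (so that $(1+x)(1+y)=1+x+y$), suggests that this summand is the additive group of $X$, i.e., $X_{\geq 0}$ after passing to connective covers. Rigorously, this is the $\cJ$-space counterpart of Basterra--Mandell's computation of the units of a square zero $E_\infty$-extension; I would establish it either by appealing to the constructions of \cite{Sagave_spectra-of-units}, or by a direct $\pi_*$-comparison: one uses the long exact sequence from the space-level fiber sequence together with the explicit splitting and the description of the units of a graded ring $\pi_*(A)\oplus\pi_*(X)$ with square zero ideal, to check that the map in question induces isomorphisms on all $\pi_n$ for $n\geq 0$.
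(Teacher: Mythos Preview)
Your approach differs substantially from the paper's. The paper does not work directly with the graded units spectrum; instead it proves an auxiliary reduction (Lemma~\ref{lem:one-plus-X-J-I-reduction}) showing that the $\Gamma$-spaces $\gamma((1+X)^{\cI})$ and $\gamma((1+X)^{\cJ})$ are level equivalent. The argument uses the diagonal functor $\Delta\colon\cI\to\cJ$ and the identification $\Delta^*\GLoneJof(A)\cong\GLoneIof(A)$ from~\cite[Lemma~2.12]{Sagave_spectra-of-units} to compare the two defining pullbacks, together with~\cite[Proposition~4.1]{Sagave_spectra-of-units} to see that the relevant square is homotopy cartesian after~$(-)_{h\cJ}$. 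With this reduction in hand, the proposition follows immediately from Rognes' $\cI$-space result~\cite[Lemma~11.2]{Rognes_TLS}, which already identifies the spectrum associated with $\gamma((1+X)^{\cI})$ with the connective cover of~$X$.

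Your Steps~1 and~2 are sound and do yield the fiber description you claim. The gap is in Step~3: you never construct a comparison map between the spectrum associated with $\gamma((1+X)^{\cJ})$ and $X_{\geq 0}$. A $\pi_*$-comparison can only verify that a \emph{given} map is an equivalence; it cannot produce one, and matching homotopy groups abstractly is not enough. The heuristic $1+x\leftrightarrow x$ is not yet a map of spectra, and the Basterra--Mandell style computation you invoke concerns the ordinary $\glone$, not the graded $\gloneJ$; transporting it to the $\cJ$-setting requires either reproving their construction for graded units or reducing to the ungraded case---and the latter is exactly what the paper accomplishes via the $\cI$-comparison. Your appeal to~\cite{Sagave_spectra-of-units} is too unspecific to close the gap. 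The overall strategy is plausible, but as written the decisive step is asserted rather than proved, and completing it would essentially amount to reproving Rognes' lemma in the $\cJ$-setting.
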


We now explain how to form square zero extensions in pre-log ring
spectra:
\begin{construction}\label{cons:MplusXJ}
  Let $(M,\alpha)$ be a graded pre-log structure on $A$. The universal
  property of the coproduct $\boxtimes$ in $\cC\cS^{\cJ}$ and the maps
  \[\alpha \colon M \to \Omega^{\cJ}(A\wdgfib X)\quad \text{ and }
  \quad (1+X)^{\cJ} \to \GLoneJof(A\wdgfib X) \to
  \Omega^{\cJ}(A\wdgfib X)\] induce a pre-log structure $ M
  \boxtimes (1+X)^{\cJ} \to \Omega^{\cJ}(A\wdgfib X)$ on $A\wdgfib X$.
  Using the augmentation $(1+X)^{\cJ} \to U^{\cJ}$ and the isomorphism
  $M\boxtimes U^{\cJ}\iso M$, we obtain a map $M \boxtimes (1+X)^{\cJ}
  \to M$ such that the outer square in
  \begin{equation}\label{eq:def-MplusXJ}
    \xymatrix@-1pc{
      M \boxtimes (1+X)^{\cJ} \ar[rrr] \ar[dd] \ar@{>->}[dr]^{\sim}& & \qquad&\Omega^{\cJ}(A\wdgfib X)\ar@{->>}[dd]\\
      & (M+X)^{\cJ} \ar@{->>}[dl] \ar@{-->}[urr] \\
      M \ar[rrr]& & &\Omega^{\cJ}(A) }
  \end{equation} 
  commutes. We define $(M+X)^{\cJ}$ by the indicated factorization in
  the positive $\cJ$-model structure. Since $A\wdg_f X \to A$ is a
  fibration by assumption, the dotted arrow exists and defines the
  pre-log ring spectrum $(A\wdg_f X, (M+X)^{\cJ})$. Having ensured
  that $(M+X)^{\cJ} \to M$ is a fibration will turn out useful when
  considering maps into $(A\wdg_f X, (M+X)^{\cJ})$.
\end{construction}
  The diagram induces a sequence of maps $(A,M) \to (A\wdgfib
  X,(M+X)^{\cJ}) \to (A,M)$ making $(A,(M+X)^{\cJ})$ a graded pre-log
  ring spectrum under and over $(A,M)$. Using this, we can state the
  homotopical counterpart of the algebraic notion of log derivations
  outlined in the introduction:
\begin{definition}
Let $(R,P) \to (A,M)$ be a map of graded pre-log ring spectra and let $X$ be an $A$-module. 
A \emph{graded log derivation} of $(A,M)$ over $(R,P)$ with values in $X$ is a map 
\[ (d,d^{\flat}) \colon (A,M) \to (A\wdgfib
  X,(M+X)^{\cJ})\] 
under $(R,P)$ and over $(A,M)$. That is, $(d,d^{\flat})$ is a map such that the diagram
\[\xymatrix@-1pc{
  (R,P) \ar[rr] \ar[d] && (A\wdgfib
  X,(M+X)^{\cJ}) \ar[d] \\
  (A,M) \ar[rr] \ar@{-->}[urr]^(.4){(d,d^{\flat})}&& (A,M) }\] commutes.
\end{definition}
We will use this notion to motivate and justify the definition of
logarithmic topological Andr\'{e}-Quillen homology below. 

For later use we note two more properties of the induced pre-log
structure on the square zero extension:
\begin{lemma}
  The logification $(M,\alpha) \to (M^a,\alpha^a)$ induces a weak
  equivalence \begin{equation}\label{eq:logification-and-square-zero}(A\wdgfib X,((M+X)^{\cJ})^a) \to(A\wdgfib
  X,(M^a+X)^{\cJ}).\end{equation} If $(A,M)$ is a graded log ring spectrum, then
  so is $(A\wdgfib X,(M+X)^{\cJ})$.
\end{lemma}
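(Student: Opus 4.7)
The plan is to prove both assertions by identifying all relevant commutative $\cJ$-space monoids as $\boxtimes$-products involving $(1+X)^{\cJ}$, and then invoking Lemma \ref{lem:GLoneJof_AwdgX_decomposition} together with homotopy invariance of $\boxtimes$ under flatness.

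First I would establish the key technical computation: the preimage $\alpha_X^{-1}(\GLoneJof(A\wdgfib X))$ in $(M+X)^{\cJ}$ is $\cJ$-equivalent to $M_0 \boxtimes (1+X)^{\cJ}$, where $M_0 = \alpha^{-1}(\GLoneJof(A))$. The idea is to use the acyclic cofibration $M \boxtimes (1+X)^{\cJ} \rightarrowtail (M+X)^{\cJ}$ from \eqref{eq:def-MplusXJ} and test unit-ness on the coproduct: a representative $(m,g)$ is sent by the structure map to a product $\alpha(m)\cdot g$ in $\Omega^{\cJ}(A\wdgfib X)$. Since $(1+X)^{\cJ}$ maps into $\GLoneJof(A\wdgfib X)$ by its very definition, and since $\varepsilon_X$ sends units to units, this product is a unit if and only if $\alpha(m)$ is a unit in $A$. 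With this identification in hand, the second assertion of the lemma is immediate: if $M_0 \to \GLoneJof(A)$ is a $\cJ$-equivalence, then applying $- \boxtimes (1+X)^{\cJ}$ (which preserves $\cJ$-equivalences because $(1+X)^{\cJ}$ is cofibrant and hence flat) yields a $\cJ$-equivalence $M_0 \boxtimes (1+X)^{\cJ} \to \GLoneJof(A)\boxtimes(1+X)^{\cJ}$, and Lemma \ref{lem:GLoneJof_AwdgX_decomposition} identifies the target with $\GLoneJof(A\wdgfib X)$.

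For the first assertion, I would compute both sides of the canonical map $((M+X)^{\cJ})^a \to (M^a+X)^{\cJ}$ via the pushout formula of Construction \ref{cons:construction-logification}. Choosing a fibrant replacement $G$ of $M_0 \to \GLoneJof(A)$ in the sense of the logification of $M$, the object $G \boxtimes (1+X)^{\cJ}$ serves as the corresponding replacement for $M_0 \boxtimes (1+X)^{\cJ} \to \GLoneJof(A\wdgfib X)$. Since $\boxtimes$ is the coproduct in $\cC\cS^{\cJ}$ and therefore distributes over pushouts, we obtain
\[
((M+X)^{\cJ})^a \simeq (M \boxtimes (1+X)^{\cJ}) \boxtimes_{M_0 \boxtimes (1+X)^{\cJ}} (G \boxtimes (1+X)^{\cJ}) \simeq (M \boxtimes_{M_0} G) \boxtimes (1+X)^{\cJ} = M^a \boxtimes (1+X)^{\cJ},
\]
and the last term is $\cJ$-equivalent to $(M^a+X)^{\cJ}$ by the very construction of that object. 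Naturality of all identifications in $M$ ensures that the induced map realizes this equivalence.

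The main obstacle will be the careful verification of the identification $\alpha_X^{-1}(\GLoneJof(A\wdgfib X)) \simeq M_0 \boxtimes (1+X)^{\cJ}$. Since $\alpha_X$ does not factor through the fibration $(M+X)^{\cJ} \to M$, one cannot simply read the preimage off a strict pullback square; the identification depends essentially on the fact that $(1+X)^{\cJ}$ lands in the units. Checking that the relevant commutative squares are homotopy cartesian (using right properness of the positive $\cJ$-model structure), and that the $\boxtimes$-products correctly compute the resulting homotopy pullbacks under the cofibrancy assumptions built into Definition \ref{def:one-plus-X-J} and Construction \ref{cons:MplusXJ}, is where the technical care is concentrated.
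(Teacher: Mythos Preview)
Your proposal is correct and follows essentially the same route as the paper: reduce to the $\boxtimes$-product models $M\boxtimes(1+X)^{\cJ}$ and $M^a\boxtimes(1+X)^{\cJ}$, identify the unit preimage as $\widetilde{M}\boxtimes(1+X)^{\cJ}$, invoke Lemma~\ref{lem:GLoneJof_AwdgX_decomposition} and flatness of $(1+X)^{\cJ}$ for the second claim, and use $G\boxtimes(1+X)^{\cJ}$ as the logification factorization to compute the pushout for the first claim.

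The one place where the paper is a bit cleaner than your outline is precisely the ``main obstacle'' you flag. Rather than reasoning about representatives $(m,g)$ and worrying about homotopy pullbacks, the paper uses the disjoint decomposition of underlying $\cJ$-spaces $M=\widetilde{M}\amalg\widehat{M}$ (units and non-units over $\Omega^{\cJ}(A)$), notes that since $(1+X)^{\cJ}$ is grouplike the induced decomposition $(\widetilde{M}\boxtimes(1+X)^{\cJ})\amalg(\widehat{M}\boxtimes(1+X)^{\cJ})$ is exactly the unit/non-unit decomposition over $\Omega^{\cJ}(A\wdgfib X)$, and reads off the preimage strictly. This sidesteps any homotopy-pullback subtleties, so you may find it simplifies the verification you were anticipating.
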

\begin{proof}
  It follows from the lifting axioms that $(M,\alpha) \to
  (M^a,\alpha^a)$ extends to the map of pre-log ring
  spectra~\eqref{eq:logification-and-square-zero}. Since the log
  condition and the logification are invariant under weak
  equivalences, it is enough to check the claim for the pre-log
  structures defined by $M \boxtimes (1+X)^{\cJ}$ and $M^{a} \boxtimes
  (1+X)^{\cJ}$.

  Let $(M,\alpha)$ be a pre-log structure. In the notation of
  Lemma~\ref{lem:logification}, $M$ decomposes as the part
  $\widetilde{M}$ that maps to the units $\GLoneJof(A)$ of $A$ and its
  complement $\widehat{M}$. Since $(1+X)^{\cJ}$ is grouplike, it is
  easy to see that
  \[ (\widetilde{M} \boxtimes (1+X)^{\cJ}) \textstyle\coprod
  (\widehat{M} \boxtimes (1+X)^{\cJ}) \to \Omega^{\cJ}(A\wdgfib X)\]
  is the corresponding decomposition of $ M \boxtimes (1+X)^{\cJ}$
  over $\Omega^{\cJ}(A\wdgfib X)$. Using that $(1+X)^{\cJ}$ is flat
  and that the $\boxtimes$-product with a flat $\cJ$-space preserves
  $\cJ$-equivalences (\cite[Proposition 8.2]{Sagave-S_diagram}),
  Lemma~\ref{lem:GLoneJof_AwdgX_decomposition} shows the second claim
  of the lemma.

  Defining $G$ as in the construction of the logification of
  $(M,\alpha)$, the same arguments show that the factorization
  \[\xymatrix@1{\widetilde{M} \boxtimes (1+X)^{\cJ} \; \ar@{>->}[r] &
    \; G \boxtimes (1+X)^{\cJ}\; \ar[r]^{\sim} & \; \GLoneJof(A\wdgfib
    X)} \] may be used to form the pushout giving the logification of
  $(M+X)^{\cJ}$. With this choice for the cofibrant
  replacement, the pushout is $M^a \boxtimes (1+X)^{\cJ}$.
\end{proof}
\begin{lemma}\label{lem:hty-pullback-with-square-zero}
The commutative square
\[\xymatrix@-1pc{
(M+X)^{\cJ} \ar[r] \ar[d] & (M^a+X)^{\cJ} \ar[d] \\
M \ar[r] & M^a
}
\]
induced by the logification is a homotopy pullback. 
\end{lemma}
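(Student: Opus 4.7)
The plan is to reduce the claim to showing that a canonical comparison map to a strict pullback is a $\cJ$-equivalence, and then to verify this after applying $(-)_{h\cJ}$. By Construction~\ref{cons:MplusXJ}, both vertical maps in the square are positive $\cJ$-fibrations. Since $\cC\cS^{\cJ}$ is right proper, the homotopy pullback of $M \to M^a \leftarrow (M^a+X)^{\cJ}$ coincides with the strict pullback $P = M \times_{M^a} (M^a+X)^{\cJ}$, so the task reduces to showing that the canonical map $(M+X)^{\cJ} \to P$ is a $\cJ$-equivalence.

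To verify this I would compare the homotopy fiber sequences of the two vertical $\cJ$-fibrations. Both $(M+X)^{\cJ}$ and $(M^a+X)^{\cJ}$ are obtained by factoring $M \boxtimes (1+X)^{\cJ} \to M$ and $M^a \boxtimes (1+X)^{\cJ} \to M^a$ as an acyclic cofibration followed by a fibration in the positive $\cJ$-model structure. Since $(1+X)^{\cJ}$ is cofibrant and therefore flat, Lemma~\ref{lem:monoidal-structure-map-equiv} identifies the resulting maps on homotopy colimits with the projections $M_{h\cJ} \times ((1+X)^{\cJ})_{h\cJ} \to M_{h\cJ}$ and $(M^a)_{h\cJ} \times ((1+X)^{\cJ})_{h\cJ} \to (M^a)_{h\cJ}$. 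In particular, both vertical fibrations give rise to homotopy fiber sequences with fiber $((1+X)^{\cJ})_{h\cJ}$ after applying $(-)_{h\cJ}$, and the same holds for the pulled-back fibration $P \to M$ by the homotopy fiber sequence for positive $\cJ$-fibrations from \cite[Lemma~4.2]{Sagave-S_diagram}.

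Placing these fiber sequences side by side over $M_{h\cJ}$, the comparison $(M+X)^{\cJ} \to P$ induces a map of fiber sequences that is the identity on the base and on the fiber. A five-lemma argument on the resulting long exact sequences of homotopy groups then shows that $((M+X)^{\cJ})_{h\cJ} \to P_{h\cJ}$ is a weak equivalence, so that $(M+X)^{\cJ} \to P$ is a $\cJ$-equivalence. I expect the main technical obstacle to be verifying that the two natural identifications of the fiber with $(1+X)^{\cJ}$---one arising from the construction of $(M+X)^{\cJ}$, the other from forming the pullback of $(M^a+X)^{\cJ} \to M^a$ along $M \to M^a$---actually agree under the horizontal comparison. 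This should follow from the naturality of Construction~\ref{cons:MplusXJ} together with right properness, but will require careful diagrammatic bookkeeping.
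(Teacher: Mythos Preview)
Your approach is correct and shares the same core idea as the paper's proof: reduce to $(-)_{h\cJ}$ and use that $(1+X)^{\cJ}$ is flat so that Lemma~\ref{lem:monoidal-structure-map-equiv} identifies $(M\boxtimes(1+X)^{\cJ})_{h\cJ}$ with the product $M_{h\cJ}\times((1+X)^{\cJ})_{h\cJ}$, and likewise for $M^a$.

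The paper's argument is more direct, however. Rather than forming the strict pullback $P$, comparing fiber sequences, and running a five-lemma, the paper simply observes that after applying $(-)_{h\cJ}$ the square becomes the evident product square
\[
\xymatrix@-1pc{
M_{h\cJ}\times ((1+X)^{\cJ})_{h\cJ} \ar[r]\ar[d] & (M^a)_{h\cJ}\times ((1+X)^{\cJ})_{h\cJ} \ar[d]\\
M_{h\cJ} \ar[r] & (M^a)_{h\cJ},
}
\]
which is manifestly homotopy cartesian, and then invokes \cite[Corollary~11.4]{Sagave-S_diagram} to conclude that the original square is a homotopy pullback in $\cS^{\cJ}$. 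This sidesteps entirely the ``main technical obstacle'' you flag---the compatibility of the two fiber identifications---since no fiber comparison is ever needed. Your route works, but the bookkeeping you anticipate is an artifact of the detour through $P$ and the five-lemma rather than something intrinsic to the statement.
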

\begin{proof}
  This follows by combining the description of $ (M^a+X)^{\cJ}$ in the
  last lemma, Lemma~\ref{lem:monoidal-structure-map-equiv}, and the
  fact that homotopy pullbacks in $\cJ$-spaces are detected by
  $(-)_{h\cJ}$ (see~\cite[Corollary
  11.4]{Sagave-S_diagram}).
\end{proof}

\subsection{Spaces of maps between graded pre-log ring spectra} When working
with mapping spaces, one often has to ensure a cofibrant domain and a
fibrant codomain. To implement this in the case at hand, we use the
following ``projective'' model category structure on graded pre-log ring
spectra whose proof easily follows from the fact that
$(\mS^{\cJ}[-],\Omega^{\cJ})$ is a Quillen adjunction:
\begin{lemma}
  The category of graded pre-log ring spectra admits a model structure in
  which $(f,f^{\flat}) \colon (A,M) \to (B,N)$ is a weak equivalence
  (or fibration) if $f$ is a stable equivalence (or stable positive
  fibration) in $\cC\Spsym$ and $f^{\flat}$ is a $\cJ$-equivalence (or
  positive $\cJ$-fibration) in $\cC\cS^{\cJ}$.\qed
\end{lemma}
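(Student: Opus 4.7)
My plan is to verify the model category axioms directly, using that the forgetful functor $U \colon (A,M,\alpha) \mapsto (A,M)$ to the product model category $\cC\Spsym \times \cC\cS^{\cJ}$ detects weak equivalences and fibrations by construction. First I would note that graded pre-log ring spectra form a bicomplete category: limits are computed componentwise, since $\Omega^{\cJ}$ preserves limits as a right adjoint; colimits are obtained by taking the componentwise colimits of the $A_i$ in $\cC\Spsym$ and of the $M_i$ in $\cC\cS^{\cJ}$ and then assembling the structure map via the adjunction $(\mS^{\cJ}[-], \Omega^{\cJ})$, namely as the adjoint of the canonical map $\mS^{\cJ}[\colim M_i] \iso \colim \mS^{\cJ}[M_i] \to \colim A_i$. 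The 2-out-of-3 and retract axioms are immediate from the corresponding properties in $\cC\Spsym$ and $\cC\cS^{\cJ}$.

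For the lifting axioms, cofibrations (respectively acyclic cofibrations) can be characterized by the LLP against acyclic fibrations (respectively fibrations). Since these latter classes are detected componentwise by $U$, any lifting problem reduces to solving one lifting problem in $\cC\Spsym$ and one in $\cC\cS^{\cJ}$; the two chosen lifts automatically assemble to a map of pre-log ring spectra because the required compatibility with the structure maps follows from the commutativity of the original square.

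The main obstacle is the factorization axiom. Given $(f,f^{\flat}) \colon (A,M,\alpha) \to (B,N,\beta)$, I would first factor $f = p_f j_f$ in $\cC\Spsym$ and $f^{\flat} = p_{\flat} j_{\flat}$ in $\cC\cS^{\cJ}$, obtaining intermediate objects $A'$ and $M'$. To produce a compatible structure map $\alpha' \colon M' \to \Omega^{\cJ}(A')$, I would invoke the lifting axiom in $\cC\cS^{\cJ}$ applied to the square with left vertical $j_{\flat} \colon M \to M'$, right vertical $\Omega^{\cJ}(p_f) \colon \Omega^{\cJ}(A') \to \Omega^{\cJ}(B)$, top horizontal $\Omega^{\cJ}(j_f) \circ \alpha$, and bottom horizontal $\beta \circ p_{\flat}$; this square commutes by the hypothesis $\beta f^{\flat} = \Omega^{\cJ}(f) \alpha$. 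For the factorization into (cofibration, acyclic fibration), choose $p_f$ and $p_{\flat}$ to be acyclic fibrations and $j_{\flat}$ to be a cofibration; then $\Omega^{\cJ}(p_f)$ is an acyclic fibration because $\Omega^{\cJ}$ is right Quillen, and the lift $\alpha'$ exists. The dual choice, with $j_f, j_{\flat}$ acyclic cofibrations and $p_f, p_{\flat}$ fibrations, produces the (acyclic cofibration, fibration) factorization and completes the verification.
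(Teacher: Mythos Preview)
Your factorization argument is nice and uses the right Quillen property of $\Omega^{\cJ}$ in exactly the way the paper's one-line proof alludes to. However, the lifting paragraph contains a genuine gap that undermines the whole verification. Given independently chosen lifts $h$ in $\cC\Spsym$ and $h^{\flat}$ in $\cC\cS^{\cJ}$, the compatibility $\gamma h^{\flat} = \Omega^{\cJ}(h)\beta$ does \emph{not} follow from commutativity of the original square: all you can deduce is that $\gamma h^{\flat}$ and $\Omega^{\cJ}(h)\beta$ agree after precomposing with $i^{\flat}$ and after postcomposing with $\Omega^{\cJ}(p)$, and neither map is an epi or mono in general. So you have not shown that maps with componentwise cofibrant components have the LLP, and hence you have not shown that the first map $(j_f,j_{\flat})$ in your factorization is a cofibration in the pre-log category.

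The fix is to solve the two lifting problems \emph{sequentially} via the adjunction rather than independently: first lift $h^{\flat}$ in $\cC\cS^{\cJ}$, then use it to set up a lifting problem in $\cC\Spsym$ whose left vertical is the induced map $\mS^{\cJ}[N]\amalg_{\mS^{\cJ}[M]}A \to B$ rather than $A \to B$. The resulting lift $h$ then satisfies $\gamma h^{\flat} = \Omega^{\cJ}(h)\beta$ by construction. But this argument requires $\mS^{\cJ}[N]\amalg_{\mS^{\cJ}[M]}A \to B$ to be a cofibration, which is a different condition from $A \to B$ being one; indeed the paper's remark immediately after the lemma confirms that cofibrancy of $(A,M)$ means $M$ is cofibrant and $\mS^{\cJ}[M]\to A$ is a cofibration. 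The factorization must be built accordingly: first factor $f^{\flat}$ in $\cC\cS^{\cJ}$, form the pushout $\mS^{\cJ}[M']\amalg_{\mS^{\cJ}[M]}A$, and then factor the induced map from this pushout to $B$ in $\cC\Spsym$. With this correction, the rest of your outline (retract argument for the compatibility of the two notions of acyclic cofibration, etc.) goes through.
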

So the weak equivalences are those of
Definition~\ref{def:graded-pre-log}, and $(A,M)$ is cofibrant if and
only if $M$ is cofibrant and $\mS^{\cJ}[M] \to A$ is a cofibration.

Both commutative symmetric ring spectra and commutative $\cJ$-space
monoids are simplicial model categories. We denote their mapping
spaces by $\Map_{\cC\Spsym}$ and $\Map_{\cC\cS^{\cJ}}$. These mapping
space can be defined using the tensor with the cosimplicial simplicial
set $[n] \mapsto \Delta^{n}$, and this tensor is in turn defined via the
coproduct over $\Delta^{n}_k$ and the realization. In particular, the left adjoint
$\mS^{\cJ}[-]$ commutes with the tensor and induces a map of spaces
$\Map_{\cC\cS^{\cJ}}(M,N)\to
\Map_{\cC\Spsym}(\mS^{\cJ}[M],\mS^{\cJ}[N])$.

If $(A,M)$ and $(B,N)$ are graded pre-log ring spectra, then their structure maps
induce a diagram 
\begin{equation}\label{eq:def-mapping-space}
\Map_{\cC\cS^{\cJ}}(A,B) \to \Map_{\cC\cS^{\cJ}}(\mS^{\cJ}[M],B) \ot \Map_{\cC\cS^{\cJ}}(M,N)
\end{equation}
\begin{definition}
The space of maps $\Map_{\cP}((A,M),(B,N))$ between pre-log ring spectra $(A,M)$ and $(B,N)$
is the pullback of diagram~\eqref{eq:def-mapping-space}.
\end{definition}
\begin{corollary}
  If $(A,M)$ is cofibrant and $(B,N)$ is fibrant, then the pullback
  of~\eqref{eq:def-mapping-space} captures the homotopy type of its
  homotopy pullback. \qed
\end{corollary}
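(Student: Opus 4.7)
The plan is to exhibit one of the two legs of the cospan as a Kan fibration between Kan complexes; once this is done, the ordinary pullback and the homotopy pullback agree.

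First I unpack the cofibrancy and fibrancy hypotheses. By the model structure on pre-log ring spectra recalled just above, $(A,M)$ being cofibrant means $A$ is cofibrant in $\cC\Spsym$, $M$ is cofibrant in $\cC\cS^{\cJ}$, and the adjoint structure map $\mS^{\cJ}[M] \to A$ is a (positive) cofibration in $\cC\Spsym$. Similarly, $(B,N)$ being fibrant means $B$ is positive fibrant in $\cC\Spsym$ and $N$ is positive $\cJ$-fibrant in $\cC\cS^{\cJ}$. In particular, each of the three mapping spaces entering the pullback is a Kan complex, since $\cC\Spsym$ and $\cC\cS^{\cJ}$ are simplicial model categories and we are mapping from a cofibrant object to a fibrant object in the respective category (using that $\mS^{\cJ}[M]$ is cofibrant, as $\mS^{\cJ}[-]$ is a left Quillen functor applied to the cofibrant $M$).

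Next, I apply the SM7 axiom in $\cC\Spsym$ to the cofibration $\mS^{\cJ}[M] \to A$ and the fibrant object $B$: the induced map
\[
 \Map_{\cC\Spsym}(A,B) \longrightarrow \Map_{\cC\Spsym}(\mS^{\cJ}[M], B)
\]
is a Kan fibration of simplicial sets. Using the Quillen adjunction $(\mS^{\cJ}[-],\Omega^{\cJ})$ one identifies $\Map_{\cC\Spsym}(\mS^{\cJ}[M], B)$ with $\Map_{\cC\cS^{\cJ}}(M, \Omega^{\cJ}(B))$, and the map $\Map_{\cC\cS^{\cJ}}(M,N) \to \Map_{\cC\cS^{\cJ}}(M, \Omega^{\cJ}(B))$ appearing in~\eqref{eq:def-mapping-space} agrees with the one induced by $N \to \Omega^{\cJ}(B)$ on mapping spaces out of the cofibrant $M$; it is therefore a map of Kan complexes.

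Finally, since in the cospan defining the pullback at least one of the two legs, namely the leg out of $\Map_{\cC\Spsym}(A,B)$, is a Kan fibration between Kan complexes, the strict pullback computes the homotopy pullback. This gives the claim. The only mild subtlety is making sure the identification of the middle mapping space with the one in the original diagram is compatible with both legs of the cospan, but this is exactly the naturality of the adjunction $(\mS^{\cJ}[-],\Omega^{\cJ})$ as a Quillen adjunction of simplicial model categories, so no real obstacle arises.
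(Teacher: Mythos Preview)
Your argument is correct and is precisely the standard verification the paper omits (the corollary is stated with an immediate \qed). The key point, as you identify, is that SM7 in the simplicial model category $\cC\Spsym$ applied to the cofibration $\mS^{\cJ}[M] \to A$ and the fibrant target $B$ makes one leg of the cospan a Kan fibration, so the strict pullback computes the homotopy pullback.
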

Hence the mapping space $\Map_{\cP}$ is invariant under weak
equivalences in both variables if the objects are sufficiently
cofibrant and fibrant.

If $\cC$ is a model category and $E \to G$ is a map in $\cC$, then
there is a canonical model structure on the category $\cC^{E}_{G}$ of
objects under $E$ and over $G$: Objects in $\cC^{E}_{G}$ are
factorizations $E \to F \to G$, and a map from $E \to F \to G$ to $E \to F'
\to G$ is a weak equivalence, cofibration, or fibration if and only if
its projection to $F \to F'$ is. If $\cC$ is in addition a simplicial
model category, taking iterated pullbacks of diagrams induced by the
augmentation and the coaugmentation of the domain and the codomain
defines mapping spaces for $\cC^{E}_{G}$.

\begin{definition}
Let $(R,P) \to (A,M)$ be a cofibration of graded pre-log ring spectra and 
let $X$ be a fibrant $A$-module. The \emph{space of graded log derivations} of $(A,M)$
over $(R,P)$ with values in $X$ is the mapping space
\[ \Der_{(R,P)}((A,M),X) = \Map^{(R,P)}_{(A,M)}((A,M),(A\wdgfib
X,(M+X)^{\cJ}))\] This is homotopically meaningful because $ (A\wdgfib
X,(M+X)^{\cJ})$ is fibrant.
\end{definition}
\subsection{Construction of graded log \texorpdfstring{$\TAQ$}{TAQ}}
We will now explain how the space $\Der_{(R,P)}((A,M),X)$ can be corepresented by
an $A$-module. 

We start by decomposing this space as a homotopy pullback.  For this
we will write $\Map^P_M(-,-)$ for the mapping space in
$(\cC\cS^{\cJ})^P_M$ and $\Map^R_A(-,-)$ for the mapping space in
$(\cC\Spsym)^R_A$.
Then the commutative squares 
\[\xymatrix@-1pc{
\mS^{\cJ}[(M+X)^{\cJ}] \ar[r] \ar[d]& A\wdgfib X \ar[d]  \\
\mS^{\cJ}[M] \ar[r]& A}
\qquad \text{and} \qquad 
\xymatrix@-1pc{\mS^{\cJ}[P] \ar[r]\ar[d]& R\ar[d]\\ \mS^{\cJ}[M] \ar[r] & A}
\]
induced by the maps $(A\wdgfib X,(M+X)^{\cJ}) \to
(A,M)$ and $(R,P) \to (A,M)$ give rise to a commutative square
\begin{equation}\label{eq:log-der-decomposition}\xymatrix@-1pc{
\Der_{(R,P)}((A,M),X) \ar[r] \ar[d] & \Map^R_A(A,A\wdgfib X) \ar[d] \\
\Map^P_M(M,(M+X)^{\cJ}) \ar[r]   & \Map^{\mS^{\cJ}[P]}_A(\mS^{\cJ}[M],A\wdgfib X).
}
\end{equation}
The description of the mapping space in~\eqref{eq:def-mapping-space} implies
\begin{lemma}\label{lem:log-der-decomposition}
  The square~\eqref{eq:log-der-decomposition} is homotopy
  cartesian. \qed
\end{lemma}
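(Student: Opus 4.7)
The plan is to unwind $\Der_{(R,P)}((A,M),X) = \Map^{(R,P)}_{(A,M)}((A,M),(A\wdgfib X, (M+X)^{\cJ}))$ as a single finite limit of mapping spaces in $\cC\Spsym$ and $\cC\cS^{\cJ}$, reorganize that limit to match the square~\eqref{eq:log-der-decomposition}, and finally upgrade the strict pullback to a homotopy pullback via a standard SM7 argument.

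First I would unpack the mapping space in the under/over category of pre-log ring spectra into an iterated pullback: by the construction of mapping spaces in $\cC^{E}_{G}$ recalled before the lemma, $\Der_{(R,P)}((A,M),X)$ is the limit of a diagram built from $\Map_{\cP}((A,M),(A\wdgfib X,(M+X)^{\cJ}))$ together with the structure maps $(R,P) \to (A,M)$ and $(A\wdgfib X,(M+X)^{\cJ}) \to (A,M)$ enforcing the coaugmentation and augmentation conditions. Substituting the pullback description~\eqref{eq:def-mapping-space} of $\Map_{\cP}$ in terms of mapping spaces in the ambient categories then presents $\Der_{(R,P)}((A,M),X)$ as a single finite limit. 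Its points are precisely pairs $(f,f^{\flat})$ consisting of a map $f\colon A \to A\wdgfib X$ of commutative symmetric ring spectra under $R$ and over $A$ and a map $f^{\flat}\colon M \to (M+X)^{\cJ}$ of commutative $\cJ$-space monoids under $P$ and over $M$, subject to the compatibility that the two induced maps $\mS^{\cJ}[M] \to A\wdgfib X$ agree under $\mS^{\cJ}[P]$ and over $A$.

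A Fubini-type rearrangement of this finite limit identifies the two independent pieces of data with the spaces $\Map^R_A(A,A\wdgfib X)$ and $\Map^P_M(M,(M+X)^{\cJ})$, whose agreement condition lives in the common space $\Map^{\mS^{\cJ}[P]}_A(\mS^{\cJ}[M],A\wdgfib X)$. This exhibits $\Der_{(R,P)}((A,M),X)$ as the strict pullback of the span in~\eqref{eq:log-der-decomposition}. To upgrade strict to homotopy pullback, I would verify that the right-hand vertical map is a Kan fibration: cofibrancy of $(R,P) \to (A,M)$ in the sense of the preceding lemma ensures that $\mS^{\cJ}[M] \to A$ is a positive stable cofibration in $\cC\Spsym$, Construction~\ref{cons:MplusXJ} arranges that $A\wdgfib X \to A$ is a positive stable fibration of positive fibrant objects, and an application of the SM7 axiom to this cofibration/fibration pair (relative to the under conditions $R \to A$ and $\mS^{\cJ}[P] \to \mS^{\cJ}[M] \to A$ and the over condition on $A$) produces the required Kan fibration.

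The main obstacle is the bookkeeping for the Fubini rearrangement, which requires juggling simultaneous ``under'' conditions with respect to both $R$ and $\mS^{\cJ}[P]$ and ``over'' conditions with respect to both $A$ and $M$; once the limits are correctly reshuffled, the fibration step is a routine consequence of the cofibrancy and fibrancy hypotheses already present.
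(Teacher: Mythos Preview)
Your proposal is correct and follows the same approach as the paper, which simply asserts before the lemma that ``the description of the mapping space in~\eqref{eq:def-mapping-space} implies'' the result and then marks the statement with a \qed. You have supplied the details the paper leaves implicit: the Fubini rearrangement of the iterated pullback defining $\Map_{\cP}$ in the under/over category, and the SM7 check that one leg of the resulting strict pullback square is a Kan fibration.
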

By definition, the upper right hand corner
of~\eqref{eq:log-der-decomposition} is the space $\Der_{R}(A,X) $ of
$R$-algebra derivations of $A$ with values in $X$. The corepresenting
object has been constructed by Basterra~\cite{Basterra_Andre-Quillen}
and is known as the \emph{topological Andr\'{e}-Quillen homology}
$\TAQ^R(A)$ of $R \to A$. It has been extensively studied in the
literature.

We briefly recall the construction of $\TAQ$
from~\cite{Basterra_Andre-Quillen}: Using the left derived product
over $R$, the map $R \to A$ gives rise to an augmented $A$-algebra
$A\sm_R^{\mathbb{L}}A$.  The augmentation ideal $I_A$ may be viewed as
a functor from augmented $A$-algebras to the category non-unital
$A$-algebras $\cN_A$. With suitable model structures, it participates
as the right adjoint in a Quillen equivalence. Evaluating its right
derived functor $I_A^{\mathbb R}$ to $A\sm_R^{\mathbb{L}}A$ provides a
non-unital $A$-algebra. Applying the left derived functor of the
indecomposables $Q_A\colon \cN_A \to A\text{-}\Mod$ to this non-unital
$A$-algebra defines the $A$-module $\TAQ^R(A)$.  The fact that
Basterra uses $S$-modules in the sense of~\cite{EKMM} rather than
symmetric spectra in her description may be dealt with by either
mimicking her approach in symmetric spectra, or using Schwede's
equivalence between symmetric spectra and
$S$-modules~\cite{Schwede_S-modules-Spsym} to go back and forth
between the two setups.

As indicated above, we shall mostly need the following result about
$\TAQ$:
\begin{proposition}\cite[Proposition 3.2]{Basterra_Andre-Quillen}\label{prop:TAQ-correpresents-Der}
  The space $Der_R(A,X)$ is naturally weakly equivalent to the space
  of maps of $A$-module spectra $\Map_{A\text{-}\Mod}(\TAQ^R(A),X)$.
\end{proposition}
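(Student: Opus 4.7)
The plan is to reproduce Basterra's chain of adjunctions in the setting of commutative symmetric ring spectra. First unfold the definition: a point of $\Der_R(A,X)$ is a map $A \to A\wdgfib X$ of commutative symmetric ring spectra under $R$ and over $A$. Using the base change Quillen adjunction for commutative ring spectra along $R \to A$ (viewing $A\wdgfib X$ as an augmented commutative $A$-algebra via the projection to $A$), this rewrites as $\Map_{\cC\Spsym/A}(A\sm_R^{\mathbb L}A, A\wdgfib X)$, where $A\sm_R^{\mathbb L}A$ carries its canonical augmentation induced by the multiplication of $A$.

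Next I would invoke the Quillen equivalence between augmented commutative $A$-algebras and non-unital commutative $A$-algebras $\cN_A$: the right adjoint is the augmentation-ideal functor $I_A$ sending $B \to A$ to the fiber, while its left adjoint sends $K \in \cN_A$ to $K\sqcup A$ with the trivial augmentation. Since $A\wdgfib X$ is, up to weak equivalence, the trivial extension $Z_A(X)\sqcup A$ associated with the non-unital square-zero $A$-algebra $Z_A(X)$ (the $A$-module $X$ with zero multiplication), this adjunction converts the mapping space into $\Map_{\cN_A}(\mathbb{R}I_A(A\sm_R^{\mathbb L}A),\, Z_A(X))$. Then I would apply the Quillen adjunction $Q_A \colon \cN_A \rightleftarrows A\text{-}\Mod : Z_A$, where $Q_A(K)=K/(K\cdot K)$ is the indecomposables functor. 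This produces a natural weak equivalence with $\Map_{A\text{-}\Mod}(\mathbb{L}Q_A\,\mathbb{R}I_A(A\sm_R^{\mathbb L}A),\, X)$, and by definition the first entry is $\TAQ^R(A)$.

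The main obstacle is verifying that $((-)\sqcup A) \dashv I_A$ is indeed a Quillen equivalence, which is the core technical content of Basterra's paper; once this is granted, the rest of the argument is a formal composition of adjoints. A secondary issue is the translation between the $S$-module setup of \cite{Basterra_Andre-Quillen} and the commutative symmetric ring spectra used here, but this can be handled either by checking that each step of Basterra's argument applies verbatim with the positive stable model structure on $\cC\Spsym$ or by transporting her result along Schwede's Quillen equivalence \cite{Schwede_S-modules-Spsym}, as already indicated in the excerpt.
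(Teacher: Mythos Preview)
The paper does not give its own proof of this proposition: it is simply quoted from \cite[Proposition~3.2]{Basterra_Andre-Quillen}, with the preceding paragraph recalling Basterra's construction of $\TAQ^R(A)$ via $\mathbb{L}Q_A\,\mathbb{R}I_A(A\sm_R^{\mathbb{L}}A)$ and noting that the translation from $S$-modules to symmetric spectra can be handled by Schwede's equivalence. Your proposal correctly reproduces exactly this chain of adjunctions (base change along $R\to A$, the Quillen equivalence $((-)\sqcup A)\dashv I_A$, and $Q_A\dashv Z_A$), so it matches what the paper defers to Basterra; there is nothing further to compare.
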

To study the lower right hand corner
of~\eqref{eq:log-der-decomposition}, we first observe that there is a
dotted arrow making the following diagram commutative:
\[\xymatrix@-1pc{
  && \mS^{\cJ}[M] \wdg X \ar[rr] \ar@{>->}[d]_{\sim}&& A\wdg X  \ar@{>->}[d]^{\sim}\\
  \mS^{\cJ}[P] \ar[rr]\ar[d]&& \mS^{\cJ}[M]\wdgfib X \ar@{-->}[rr]\ar@{->>}[d]&& A\wdgfib X\ar@{->>}[d] \\
  \mS^{\cJ}[M] \ar[rr]^{=}&& \mS^{\cJ}[M] \ar[rr]&& A }\] The fact
that the lower right hand square is homotopy cartesian easily implies
that there is a weak equivalence \[\Der_{\mS^{\cJ}[P]}(\mS^{\cJ}[M],X) \to
\Map^{\mS^{\cJ}[P]}_A(\mS^{\cJ}[M],A\wdgfib X).\]
So the previous proposition implies
\begin{corollary}\label{cor:correprsenting_one_corner}
  The space $\Map^{\mS^{\cJ}[P]}_A(\mS^{\cJ}[M],A\wdgfib X)$ is
  naturally weakly equivalent to mapping space of $A$-module spectra
  $\Map_{A\text{-}\Mod}(A\sm_{\mS^{\cJ}[M]}\TAQ^{\mS^{\cJ}[P]}(\mS^{\cJ}[M]),
  X)$.
\end{corollary}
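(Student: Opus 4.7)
The plan is to chain together two weak equivalences: Basterra's corepresentability result applied to the map $\mS^{\cJ}[P]\to\mS^{\cJ}[M]$, followed by a standard extension-of-scalars adjunction for module spectra along $\mS^{\cJ}[M]\to A$.

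First, observe that the $A$-module $X$ becomes an $\mS^{\cJ}[M]$-module by restriction along $\mS^{\cJ}[M]\to A$, so the space $\Der_{\mS^{\cJ}[P]}(\mS^{\cJ}[M],X)$ constructed from the square zero extension $\mS^{\cJ}[M]\wdgfib X$ is defined and, by the weak equivalence identified just before the corollary, maps by a weak equivalence to $\Map^{\mS^{\cJ}[P]}_A(\mS^{\cJ}[M],A\wdgfib X)$. Proposition~\ref{prop:TAQ-correpresents-Der} applied to $\mS^{\cJ}[P]\to\mS^{\cJ}[M]$ with values in the $\mS^{\cJ}[M]$-module $X$ then gives a natural weak equivalence
\[
\Der_{\mS^{\cJ}[P]}(\mS^{\cJ}[M],X)\;\simeq\;\Map_{\mS^{\cJ}[M]\text{-}\Mod}\bigl(\TAQ^{\mS^{\cJ}[P]}(\mS^{\cJ}[M]),X\bigr).
\]

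Second, I would use the Quillen adjunction between the categories of $\mS^{\cJ}[M]$-modules and $A$-modules induced by the ring map $\mS^{\cJ}[M]\to A$: extension of scalars $A\sm_{\mS^{\cJ}[M]}(-)$ is left adjoint to restriction along this map, and this adjunction is simplicial, so it induces a natural weak equivalence on mapping spaces provided the source is cofibrant and the target is fibrant. Since $\TAQ^{\mS^{\cJ}[P]}(\mS^{\cJ}[M])$ is built as the derived indecomposables of a cofibrant model of the augmentation ideal of $\mS^{\cJ}[M]\sm^{\mathbb L}_{\mS^{\cJ}[P]}\mS^{\cJ}[M]$, it is cofibrant as an $\mS^{\cJ}[M]$-module, and $X$ is fibrant as an $A$-module by hypothesis. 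Hence the adjunction gives
\[
\Map_{\mS^{\cJ}[M]\text{-}\Mod}\bigl(\TAQ^{\mS^{\cJ}[P]}(\mS^{\cJ}[M]),X\bigr)\;\simeq\;\Map_{A\text{-}\Mod}\bigl(A\sm_{\mS^{\cJ}[M]}\TAQ^{\mS^{\cJ}[P]}(\mS^{\cJ}[M]),X\bigr),
\]
where the smash product on the right represents its derived functor by the cofibrancy just noted. Composing the two equivalences with the one displayed above the corollary yields the claim.

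The only real subtlety is bookkeeping of cofibrancy: one has to guarantee that the derivation space constructed using the \emph{non-fibrant} square zero extension $\mS^{\cJ}[M]\wdg X$ (which appears implicitly when invoking Basterra's result, stated for ordinary square zero extensions) agrees up to homotopy with the one built from the fibrant replacement $\mS^{\cJ}[M]\wdgfib X$ used to define $\Map^{\mS^{\cJ}[P]}_A$ here. This follows because the fibrant replacement is a stable equivalence, $A\wdgfib X$ is positive fibrant, and mapping spaces in the under-over category $(\cC\Spsym)^{\mS^{\cJ}[P]}_A$ are homotopy invariant between sufficiently cofibrant and fibrant objects. With this routine comparison in place, the two steps outlined above combine to give the natural weak equivalence claimed in the corollary.
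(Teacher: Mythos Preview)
Your proof is correct and follows the same approach as the paper: reduce $\Map^{\mS^{\cJ}[P]}_A(\mS^{\cJ}[M],A\wdgfib X)$ to $\Der_{\mS^{\cJ}[P]}(\mS^{\cJ}[M],X)$ via the homotopy cartesian square established just before the corollary, then invoke Proposition~\ref{prop:TAQ-correpresents-Der}. The paper simply writes ``So the previous proposition implies'' and leaves the extension-of-scalars adjunction along $\mS^{\cJ}[M]\to A$ implicit; you have spelled out this step and the attendant cofibrancy bookkeeping, which is entirely appropriate.
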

The lower left hand corner $\Map^P_M(M,(M+X)^{\cJ})$ of~\eqref{eq:log-der-decomposition} may be
interpreted as the space of commutative $\cJ$-space monoid derivations,
i.e., as the space $P$-algebra derivations of $M$ with values in the
grouplike commutative $\cJ$-space monoid $(1+X)^{\cJ}$. We will prove
the following result in Section~\ref{sec:CSJ-derivations}:
\begin{proposition}\label{prop:CSJ-derivations}
  The space $\Map^P_M(M,(M+X)^{\cJ})$ is naturally weakly equivalent
  to the space of maps of $A$-module spectra
  $\Map_{A\text{-}\Mod}(A\sm (\gamma(M)/\gamma(P)), X)$.
\end{proposition}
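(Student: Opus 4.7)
The plan is to translate the description of $\Map^P_M(M,(M+X)^{\cJ})$ through the functor $\gamma\colon \cC\cS^{\cJ}\to\GammaS$ into a mapping space of connective spectra, and then apply the free-forgetful adjunction to pass to $A$-modules. This proceeds in four steps.

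First, I would identify the homotopy fiber of $(M+X)^{\cJ}\to M$. By Construction~\ref{cons:MplusXJ}, this map factors $M\boxtimes(1+X)^{\cJ}\to M$, which is obtained by base change along $(1+X)^{\cJ}\to U^{\cJ}$. Flatness of $(1+X)^{\cJ}$ and Lemma~\ref{lem:monoidal-structure-map-equiv} identify the homotopy fiber at the identity section with $(1+X)^{\cJ}$. Applying $\gamma$ and passing to associated spectra then yields a homotopy fiber sequence whose fiber term, by Proposition~\ref{prop:identification_1plusX-J}, is the connective cover of the underlying spectrum of~$X$; moreover, this sequence is split by the identity section.

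Second, I would argue that $\gamma$ identifies $\Map^P_M(M,(M+X)^{\cJ})$ with the corresponding space of sections of $\gamma((M+X)^{\cJ})\to\gamma(M)$ under $\gamma(P)$ in $\Gamma$-spaces. Because $(1+X)^{\cJ}$ is grouplike, the object $(M+X)^{\cJ}$ is fibrant in the group completion model structure relative to $M$, so $\gamma$ takes the simplicial enrichment of $\cC\cS^{\cJ}$ to the enrichment of $\Gamma$-spaces compatibly with the relevant pullback defining the mapping space of over/under objects. Using that $M$, $P$ and $(M+X)^{\cJ}$ are cofibrant and fibrant in the appropriate senses, this gives a weak equivalence between the space of $P$-algebra sections of $(M+X)^{\cJ}\to M$ and the space of $\gamma(P)$-pointed sections of the split fibration in $\Gamma$-spaces.

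Third, I would stabilize. Subtracting the identity section trivializes the split fibration, so a section of $\gamma((M+X)^{\cJ})\to \gamma(M)$ corresponds to a map of $\Gamma$-spaces $\gamma(M)\to \gamma((1+X)^{\cJ})$, and the under $\gamma(P)$ condition becomes the vanishing of the restriction along $\gamma(P)\to\gamma(M)$. In the stable category of connective spectra (equivalently, $\Gamma$-spaces up to stable equivalence), such maps are classified by the mapping spectrum out of the stable cofiber $\gamma(M)/\gamma(P)$. Since $\gamma(M)/\gamma(P)$ is connective, maps into the connective cover of $X$ agree with maps into $X$, giving a natural weak equivalence with the spectrum mapping space $\Map_{\textrm{Sp}}(\gamma(M)/\gamma(P),X)$.

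Finally, the free-forgetful adjunction $A\sm -\dashv U$ between spectra and $A$-module spectra converts this to $\Map_{A\text{-}\Mod}(A\sm(\gamma(M)/\gamma(P)),X)$. The main obstacle I anticipate lies in the second step: faithfully translating the enriched mapping space of over/under objects in $\cC\cS^{\cJ}$ into the corresponding space in $\Gamma$-spaces through $\gamma$, which requires careful bookkeeping of the group completion model structure, the augmentations $M\to M$ and $P\to(M+X)^{\cJ}$, and the fact that $(M+X)^{\cJ}\to M$ is a positive $\cJ$-fibration with grouplike fibre.
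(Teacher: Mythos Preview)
Your outline is essentially the same as the paper's: pass through $\gamma$ to $\Gamma$-spaces, use the product/split-fibration structure to reduce to maps $\gamma(M)\to\gamma((1+X)^{\cJ})$ vanishing on $\gamma(P)$, take the cofiber, use connectivity to replace the connective cover of $X$ by $X$, and then apply $A\sm-\dashv U$. Your Steps~3 and~4 match the paper's final argument almost verbatim.

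The place where you flagged an obstacle is exactly where the paper invests its effort, and your sketch there is not yet a proof. Two points:
\begin{itemize}
\item For Step~2, the assertion that $\gamma$ carries $\Map^P_M(M,(M+X)^{\cJ})$ to the corresponding section space in $\Gamma$-spaces does not follow from fibrancy of $(M+X)^{\cJ}$ alone. The paper uses that there is a chain of \emph{Quillen equivalences} between $\cC\cS^{\cJ}$ with the group completion model structure and a suitable model structure $(\GammaS)_{\pre}/\bof{\cJ}$ (this is the content of \cite[Corollary~7.12]{Sagave_spectra-of-units}), and that Quillen equivalences induce weak equivalences of derived mapping spaces. This external input is what makes the translation rigorous; ``$\gamma$ is compatible with the simplicial enrichment'' is not enough by itself.
\item For the ``subtract the identity section'' manoeuvre in Step~3, the paper does not argue abstractly with split fibrations but instead proves directly that $\gamma((M+X)^{\cJ})$ is level equivalent to the \emph{product} $\gamma(M)\times(\gamma(1+X)^{\cJ})^{\fib}$ as an object of $\gamma(P)\!\downarrow\!\GammaS\!\downarrow\!\gamma(M)$, using Lemma~\ref{lem:monoidal-structure-map-equiv} and the augmentation $(1+X)^{\cJ}\to U^{\cJ}$. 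Once you have this product decomposition, the reduction to $\Map_{\GammaS}(\gamma(M)/\gamma(P),(\gamma(1+X)^{\cJ})^{\fib})$ is a routine pullback manipulation with comma-category mapping spaces, together with the observation that $(\gamma(1+X)^{\cJ})^{\fib}$ is fibrant in the \emph{stable} model structure (since $(1+X)^{\cJ}$ is grouplike), so one may pass from the pre-stable to the stable mapping space.
\end{itemize}
In short: right strategy, but the bridge you were worried about is supplied by a Quillen equivalence imported from \cite{Sagave_spectra-of-units}, and the section-subtraction is made precise via an explicit product decomposition lemma rather than a general split-fibration argument.
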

In the proposition, $\gamma(M)/\gamma(P)$ is the homotopy cofiber of
the map of $\Gamma$-spaces induced by $P \to M$, and $A\sm
(\gamma(M)/\gamma(P))$ is the $A$-module spectrum obtained from the
spectrum associated with $\gamma(M)/\gamma(P)$ by extension of scalars
along $\mS \to A$.

As functors in $X$, the mapping spaces in all but the upper left hand
corner of the square~\eqref{eq:log-der-decomposition} are
corepresented by $A$-modules. Hence we obtain the following maps
between the corepresenting objects:
\begin{equation}\label{eq:defining-graded-log-taq}
A \sm (\gamma(M)/\gamma(P)) \ot A\sm_{\mS^{\cJ}[M]}\TAQ^{\mS^{\cJ}[P]}(\mS^{\cJ}[M]) \to \TAQ^R(A)
\end{equation}
\begin{definition}\label{def:graded-log-TAQ}
  Let $(R,P) \to (A,M)$ be a map of graded pre-log ring spectra. The
  \emph{graded log topological Andr\'{e}-Quillen homology}
  $\TAQ^{(R,P)}(A,M)$ of $(A,M)$ over $(R,P)$ is the $A$-module given
  by the homotopy pushout of \eqref{eq:defining-graded-log-taq}.
\end{definition}
We call this \emph{log} $\TAQ$ rather than \emph{pre-log} $\TAQ$
because it is invariant under logification. We prove this in
Corollary~\ref{cor:log-TAQ-log-inv} below. The following result is the
main motivation behind the definition of $\TAQ^{(R,P)}(A,M)$:
\begin{proposition}\label{prop:logTAQ-correpresents-Der}
  Let $(R,P) \to (A,M)$ be a cofibration of graded pre-log ring
  spectra, let $(A,M)$ be fibrant, and let $X$ be a fibrant
  $A$-module. There is a weak equivalence between
  $\Map_{A\text{-}\Mod}(\TAQ^{(R,P)}(A,M), X)$ and
  $\Der_{(R,P)}((A,M),X)$.
\end{proposition}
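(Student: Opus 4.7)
The plan is to apply the contravariant functor $\Map_{A\text{-}\Mod}(-,X)$ to the homotopy pushout square in $A\text{-}\Mod$ defining $\TAQ^{(R,P)}(A,M)$ and then identify the resulting homotopy pullback square with the one of Lemma~\ref{lem:log-der-decomposition}. Since $\Map_{A\text{-}\Mod}(-,X)$ sends homotopy pushouts of $A$-modules to homotopy pullbacks of spaces, Definition~\ref{def:graded-log-TAQ} gives a homotopy cartesian square
\[\xymatrix@-1pc{
\Map_{A\text{-}\Mod}(\TAQ^{(R,P)}(A,M), X) \ar[r] \ar[d] & \Map_{A\text{-}\Mod}(\TAQ^R(A), X) \ar[d]\\
\Map_{A\text{-}\Mod}(A\sm(\gamma(M)/\gamma(P)), X) \ar[r] & \Map_{A\text{-}\Mod}(A\sm_{\mS^{\cJ}[M]}\TAQ^{\mS^{\cJ}[P]}(\mS^{\cJ}[M]), X).
}\]

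Next I would identify each corner of this square with the corresponding corner of the homotopy cartesian square in Lemma~\ref{lem:log-der-decomposition}. The upper right corner is identified with $\Map^R_A(A, A\wdgfib X) \simeq \Der_R(A,X)$ by Proposition~\ref{prop:TAQ-correpresents-Der}; the lower right with $\Map^{\mS^{\cJ}[P]}_A(\mS^{\cJ}[M], A\wdgfib X)$ by Corollary~\ref{cor:correprsenting_one_corner}; and the lower left with $\Map^P_M(M,(M+X)^{\cJ})$ by Proposition~\ref{prop:CSJ-derivations}. Combined with Lemma~\ref{lem:log-der-decomposition}, these identifications would produce the required weak equivalence between the upper left corner $\Map_{A\text{-}\Mod}(\TAQ^{(R,P)}(A,M), X)$ and $\Der_{(R,P)}((A,M),X)$.

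The step that requires real care is verifying that the corepresentation equivalences of the three corners are compatible with the structure maps, i.e.\ that the two maps in~\eqref{eq:defining-graded-log-taq} that go into $\TAQ^R(A)$ and $A\sm(\gamma(M)/\gamma(P))$ from $A\sm_{\mS^{\cJ}[M]}\TAQ^{\mS^{\cJ}[P]}(\mS^{\cJ}[M])$ are precisely those that, upon applying $\Map_{A\text{-}\Mod}(-,X)$, recover the maps in Lemma~\ref{lem:log-der-decomposition}. For the right-hand edge this amounts to the standard naturality and base-change properties of $\TAQ$ with respect to the commutative square $\mS^{\cJ}[P] \to \mS^{\cJ}[M] \to A$ over $R \to A$, which is exactly how the map into $\TAQ^R(A)$ is constructed; this is essentially already built into the corepresentation in Corollary~\ref{cor:correprsenting_one_corner}. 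For the bottom edge, the map from the $\TAQ^{\mS^{\cJ}[P]}(\mS^{\cJ}[M])$-corner to the $\gamma(M)/\gamma(P)$-corner is forced by the fact that both mapping spaces have the same source and the target $(M+X)^{\cJ}$ agrees with $\mS^{\cJ}[M]\wdgfib X$ after applying $\mS^{\cJ}[-]$-adjoint, up to the cofibrant replacement built into Construction~\ref{cons:MplusXJ}; the naturality statement in Proposition~\ref{prop:CSJ-derivations} supplies the required compatibility.

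Once these naturality checks are in place, the identification of the four corners together with the homotopy cartesianness of both squares gives the weak equivalence
\[\Map_{A\text{-}\Mod}(\TAQ^{(R,P)}(A,M), X) \simeq \Der_{(R,P)}((A,M),X).\]
The cofibrancy of $(R,P) \to (A,M)$, the fibrancy of $(A,M)$ and of $X$ are used precisely to ensure that all four mapping spaces in Lemma~\ref{lem:log-der-decomposition} are homotopically meaningful and that the square is actually homotopy cartesian, so that the corepresentation argument is valid.
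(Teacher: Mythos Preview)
Your proposal is correct and follows essentially the same approach as the paper, which simply notes that $\Map_{A\text{-}\Mod}(-,X)$ sends homotopy pushouts to homotopy pullbacks and then invokes Proposition~\ref{prop:TAQ-correpresents-Der}, Corollary~\ref{cor:correprsenting_one_corner}, and Proposition~\ref{prop:CSJ-derivations}. Your naturality concern is legitimate but in fact automatic here: the maps in~\eqref{eq:defining-graded-log-taq} were \emph{defined} (via Yoneda) as the maps between corepresenting objects induced by the maps in the square~\eqref{eq:log-der-decomposition}, so the compatibility you worry about holds by construction.
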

\begin{proof}
  Since $\Map_{A\text{-}\Mod}(-,X)$ maps homotopy pushouts to homotopy
  pullbacks, this follows from the definition of graded log $\TAQ$,
  Proposition~\ref{prop:TAQ-correpresents-Der},
  Corollary~\ref{cor:correprsenting_one_corner}, and
  Proposition~\ref{prop:CSJ-derivations}.
\end{proof}
\begin{definition}\label{def:graded-log-etale}
  A map of graded pre-log ring spectra $(R,P)\to (A,M)$ is
  \emph{formally graded log \'{e}tale} if the $A$-module
  $\TAQ^{(R,P)}(A,M)$ is contractible.
\end{definition}
Examples of formally graded log \'{e}tale extensions will be given in the next section. 

\section{Log \'{e}tale extensions of \texorpdfstring{$K$}{K}-theory
  spectra}\label{sec:log-etale-maps}
Let $p$ be an odd prime. Recall from
Proposition~\ref{prop:log-maps-ell-ku} that the inclusion of the
$p$-complete Adams summand into the $p$-complete connective $K$-theory spectrum 
$\ell_p \to ku_{p}$ can be extended to a map of graded log ring
spectra
\[
(\ell_p, i_*\!\GLoneJof(L_p)) \to (ku_{p}, i_*\!\GLoneJof(KU_{p})).
\]
The graded log-structures are the direct image log structures of the
respective periodic theories. The following theorem is one of the main
results of this paper. As discussed in the introduction, it confirms
that $\ell_p \to ku_{p}$ should be viewed as a tamely ramified
extension of ring spectra. 
\begin{theorem}\label{thm:ell-to-k-formally-etale}
  The map $(\ell_p, i_*\!\GLoneJof(L_p)) \to (ku_{p},
  i_*\!\GLoneJof(KU_{p}))$ is formally graded log \'{e}tale. That is,
  the graded log topological Andr\'{e}-Quillen homology spectrum
  \[ \TAQ^{(\ell_p, i_*\!\GLoneJof(L_p))}(ku_{p},
  i_*\!\GLoneJof(KU_{p}))\] is contractible. The same holds in the
  $p$-local case.
\end{theorem}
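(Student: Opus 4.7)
The plan is to reduce the statement to a direct computation of the homotopy pushout defining graded log $\TAQ$, showing that one leg is a stable equivalence while the opposite corner is contractible. First I invoke the logification-invariance of graded log $\TAQ$ (Corollary~\ref{cor:log-TAQ-log-inv} of the present paper) together with Lemma~\ref{lem:logification-of-Dx} and Proposition~\ref{prop:log-maps-ell-ku} to replace the log structures $i_*\!\GLoneJof(L_p)$ and $i_*\!\GLoneJof(KU_p)$ by the smaller and more tractable pre-log structures $D(v_1)$ and $D(u)$. Thus it suffices to prove that $\TAQ^{(\ell_p,D(v_1))}(ku_p,D(u))$ is contractible.

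By Definition~\ref{def:graded-log-TAQ}, this $ku_p$-module is the homotopy pushout of
\[
ku_p \sm \bigl(\gamma(D(u))/\gamma(D(v_1))\bigr) \longleftarrow ku_p\sm_{\mS^{\cJ}[D(u)]}\!\TAQ^{\mS^{\cJ}[D(v_1)]}(\mS^{\cJ}[D(u)]) \longrightarrow \TAQ^{\ell_p}(ku_p).
\]
For the right-hand map, Proposition~\ref{prop:ku-p-as-hty-pushout} presents $ku_p$ as the derived pushout $\mS^{\cJ}[D(u)]\sm_{\mS^{\cJ}[D(v_1)]}\ell_p$ in commutative symmetric ring spectra, so the standard base-change (transitivity) property of Basterra's topological Andr\'{e}-Quillen homology yields a stable equivalence $ku_p\sm_{\mS^{\cJ}[D(u)]}\TAQ^{\mS^{\cJ}[D(v_1)]}(\mS^{\cJ}[D(u)]) \simeq \TAQ^{\ell_p}(ku_p)$.

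It remains to show that the left-hand term is contractible. By Lemma~\ref{lem:Cx-to-Dx_on-hJ}, $D(x)_{h\cJ}\simeq Q_{\geq 0}S^0$ for $x\in\{u,v_1\}$, and its group completion is $QS^0$; consequently the spectrum associated with the $\Gamma$-space $\gamma(D(x))$ is equivalent to the sphere spectrum $\mS$. Moreover, the proof of Proposition~\ref{prop:ku-p-as-hty-pushout} shows that $D(v_1)_{h\cJ}\to D(u)_{h\cJ}$ is multiplication by $p-1$ on $Q_{\geq 0}S^0$, so the induced map on associated spectra is multiplication by $p-1$ on $\mS$. Hence $\gamma(D(u))/\gamma(D(v_1))$ has associated spectrum $\mS/(p-1)$, and $ku_p \sm \mS/(p-1)\simeq \ast$ because $p-1\in\mZ_p^{\times}=\pi_0(ku_p)^{\times}$ (as $p$ is odd) makes multiplication by $p-1$ a self-equivalence of $ku_p$. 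The $p$-local case is identical with $\mZ_p$ replaced by $\mZ_{(p)}$.

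The main technical obstacle is justifying the identification of the spectrum $|\gamma(D(x))|$ and of the map $|\gamma(D(v_1))|\to|\gamma(D(u))|$ above: Lemma~\ref{lem:Cx-to-Dx_on-hJ} is a statement about homotopy colimits, and one must promote it to a statement at the level of $\Gamma$-spaces and their associated spectra, in the spirit of the Barratt--Priddy--Quillen-type reasoning already used in Example~\ref{ex:group-completion-of-free}.
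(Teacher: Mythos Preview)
Your proposal is correct and follows essentially the same route as the paper: reduce via logification-invariance to $(\ell_p,D(v_1))\to(ku_p,D(u))$, then use Proposition~\ref{prop:ku-p-as-hty-pushout} together with flat base change for $\TAQ$ and the identification of $\gamma(D(v_1))\to\gamma(D(u))$ with multiplication by $p-1$ on the sphere spectrum (from Lemma~\ref{lem:Cx-to-Dx_on-hJ} and the proof of Proposition~\ref{prop:ku-p-as-hty-pushout}). The only cosmetic difference is that the paper packages the last two steps as the \'{e}taleness criterion of Corollary~\ref{cor:etaleness-criterion} (via the cofiber sequence of Lemma~6.2), whereas you argue directly with the defining pushout of Definition~\ref{def:graded-log-TAQ}; the content is the same.
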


The proof of Theorem~\ref{thm:ell-to-k-formally-etale} will be given
at the end of this section. The first step towards its proof is the
following graded analogue of~\cite[Lemma 11.25]{Rognes_TLS}. It gives a
useful criterion for showing that a map is formally graded log
\'{e}tale:
\begin{lemma}
Let $(R,P) \to (A,M)$ be a map of graded pre-log ring spectra, and let $C$
be the homotopy pushout of $R \ot \mS^{\cJ}[P] \to \mS^{\cJ}[M]$. Then there is 
a homotopy cofiber sequence
\begin{equation}\label{eq:hocofib-sequence-graded-log-TAQ}
A \sm (\gamma(M)/\gamma(P)) \to \TAQ^{(R,P)}(A,M) \to \TAQ^C(A)
\end{equation}
of $A$-module spectra.  
\end{lemma}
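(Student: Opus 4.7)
The plan is to identify the desired cofiber sequence as arising from the defining homotopy pushout for $\TAQ^{(R,P)}(A,M)$ combined with the transitivity and base change properties of ordinary $\TAQ$.

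First I would recall that, by Definition~\ref{def:graded-log-TAQ}, $\TAQ^{(R,P)}(A,M)$ sits in a homotopy cocartesian square
\[\xymatrix@-1pc{A\sm_{\mS^{\cJ}[M]}\TAQ^{\mS^{\cJ}[P]}(\mS^{\cJ}[M]) \ar[r]\ar[d] & \TAQ^R(A)\ar[d] \\ A\sm(\gamma(M)/\gamma(P))\ar[r] & \TAQ^{(R,P)}(A,M).}\]
Since a homotopy pushout square induces an equivalence on the homotopy cofibers of the two parallel legs, it is enough to exhibit a cofiber sequence
\[ A\sm_{\mS^{\cJ}[M]}\TAQ^{\mS^{\cJ}[P]}(\mS^{\cJ}[M]) \to \TAQ^R(A) \to \TAQ^C(A) \]
whose first map agrees with the top horizontal map in the square above.

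Next I would invoke the transitivity cofiber sequence for ordinary topological Andr\'{e}-Quillen homology established by Basterra~\cite{Basterra_Andre-Quillen}: for any composite $R \to C \to A$ of (cofibrant enough) commutative symmetric ring spectra one has a homotopy cofiber sequence of $A$-modules
\[ A\sm^{\mathbb L}_C \TAQ^R(C) \to \TAQ^R(A) \to \TAQ^C(A). \]
I apply this to $R\to C \to A$, where $C$ is the homotopy pushout of $R \ot \mS^{\cJ}[P] \to \mS^{\cJ}[M]$ and the map $C \to A$ is the canonical one induced by $\mS^{\cJ}[M] \to A$ and $R \to A$.

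The remaining step is to identify $A\sm^{\mathbb L}_C \TAQ^R(C)$ with $A\sm_{\mS^{\cJ}[M]}\TAQ^{\mS^{\cJ}[P]}(\mS^{\cJ}[M])$. This is a flat base change argument for $\TAQ$: because $C \simeq R \sm^{\mathbb L}_{\mS^{\cJ}[P]} \mS^{\cJ}[M]$ is a homotopy pushout of commutative symmetric ring spectra, the natural map
\[ C\sm^{\mathbb L}_{\mS^{\cJ}[M]} \TAQ^{\mS^{\cJ}[P]}(\mS^{\cJ}[M]) \to \TAQ^R(C) \]
is a stable equivalence (this is the standard base change statement for $\TAQ$ with respect to cobase change along commutative ring spectra; it follows from the fact that the construction of $\TAQ$ via augmentation ideals and indecomposables commutes with smashing on both sides with $C$ over $\mS^{\cJ}[M]$). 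Smashing with $A$ over $C$ then yields
\[ A\sm^{\mathbb L}_C \TAQ^R(C) \simeq A\sm_{\mS^{\cJ}[M]}\TAQ^{\mS^{\cJ}[P]}(\mS^{\cJ}[M]), \]
and under this equivalence the first map in the transitivity cofiber sequence becomes the top horizontal map of the defining homotopy pushout square. Combining, the composition of the two equivalences of cofiber sequences gives the desired sequence~\eqref{eq:hocofib-sequence-graded-log-TAQ}.

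The main technical obstacle is verifying the flat base change equivalence in the present setup: one must be sure to work with cofibrant replacements so that $C$ really is the derived pushout, and that the $\mS^{\cJ}[P]$-algebra $\mS^{\cJ}[M]$ and the $\mS^{\cJ}[P]$-algebra $R$ are cofibrant enough for Basterra's construction of $\TAQ$ to be derived. Once these cofibrancy issues are arranged (by the standard device of replacing $(R,P) \to (A,M)$ by a cofibration of graded pre-log ring spectra and choosing cofibrant models for $M$ and $P$), both the transitivity cofiber sequence and base change go through, and the claim follows.
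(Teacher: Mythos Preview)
Your proposal is correct and follows essentially the same route as the paper: use the defining homotopy pushout square to reduce to computing the cofiber of $A\sm_{\mS^{\cJ}[M]}\TAQ^{\mS^{\cJ}[P]}(\mS^{\cJ}[M]) \to \TAQ^R(A)$, then invoke flat base change and the transitivity sequence for $\TAQ$ (from~\cite[\S 4]{Basterra_Andre-Quillen}) along $R\to C\to A$ to identify that cofiber with $\TAQ^C(A)$. The paper's own proof is a two-sentence sketch of exactly this argument, citing the analogous~\cite[Lemma 11.25]{Rognes_TLS}.
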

\begin{proof}
  The proof is completely analogous to the proof of~\cite[Lemma
  11.25]{Rognes_TLS}: The defining homotopy
  pushout~\eqref{eq:defining-graded-log-taq} shows that the homotopy
  cofiber of the left map in~\eqref{eq:hocofib-sequence-graded-log-TAQ} is equivalent to the
  homotopy cofiber of
  \[A\sm_{\mS^{\cJ}[M]}\TAQ^{\mS^{\cJ}[P]}(\mS^{\cJ}[M]) \to \TAQ^R(A)\]
  Flat base change and the transitivity sequence for $\TAQ$~\cite[\S
  4]{Basterra_Andre-Quillen} allow to identify this homotopy cofiber
  with $\TAQ^{C}(A)$.
\end{proof}
\begin{corollary}\label{cor:etaleness-criterion}
  The map $(R,P) \to (A,M)$ is formally graded log \'{e}tale if the
  map $\gamma(P) \to \gamma(A)$ is an $A$-homology equivalence and
  $\TAQ^C(A)$ is contractible.\qed
\end{corollary}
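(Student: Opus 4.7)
The proof should be essentially immediate from the homotopy cofiber sequence in the preceding lemma. My plan is to read the statement directly off the cofiber sequence
\[ A \sm (\gamma(M)/\gamma(P)) \to \TAQ^{(R,P)}(A,M) \to \TAQ^{C}(A) \]
of $A$-module spectra from \eqref{eq:hocofib-sequence-graded-log-TAQ}. By Definition~\ref{def:graded-log-etale}, showing that $(R,P)\to(A,M)$ is formally graded log \'{e}tale amounts to proving that the middle term is contractible, so it suffices to show that both outer terms are.

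The right-hand term $\TAQ^{C}(A)$ is contractible by hypothesis. For the left-hand term, I would interpret the hypothesis on $\Gamma$-spaces as follows: the cofiber $\gamma(M)/\gamma(P)$ has an associated spectrum whose $A$-homology $\pi_*(A\sm(\gamma(M)/\gamma(P)))$ vanishes precisely when $\gamma(P)\to\gamma(M)$ is an $A$-homology equivalence (presumably the target of the map in the statement should read $\gamma(M)$, since this is the map appearing in the cofiber sequence). Equivalently, the $A$-module spectrum $A\sm(\gamma(M)/\gamma(P))$ is contractible.

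Since both outer terms of the cofiber sequence vanish, the middle term $\TAQ^{(R,P)}(A,M)$ is contractible as well, giving the conclusion. The only point needing a line of justification is the translation between the $A$-homology vanishing for $\gamma(M)/\gamma(P)$ as a $\Gamma$-space (or its associated connective spectrum) and contractibility of $A\sm(\gamma(M)/\gamma(P))$ as an $A$-module; this is standard and follows from the fact that $A\sm-$ here denotes the derived smash product over $\mS$, so $A$-acyclicity of a spectrum is by definition contractibility after smashing with $A$. No obstacle is anticipated --- the corollary is a formal consequence of the cofiber sequence, which is why the original statement ends with \qed.
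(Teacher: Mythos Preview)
Your proposal is correct and matches the paper's own reasoning: the corollary is marked with \qed\ because it is an immediate consequence of the homotopy cofiber sequence~\eqref{eq:hocofib-sequence-graded-log-TAQ}, exactly as you argue. Your observation about the typo is also right---the hypothesis should concern $\gamma(P)\to\gamma(M)$, as confirmed by the application in the proof of Theorem~\ref{thm:ell-to-k-formally-etale}, where the relevant map is $\gamma(D(v_1))\to\gamma(D(u))$.
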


The next aim is to show that graded log $\TAQ$ is invariant under logification. 
For this we begin with
\begin{lemma}\label{lem:map-into-log-logification-inv}
Let $(A,M)$ be a pre-log ring spectrum and let $(B,N)$ be a log ring spectrum.
The logification $(A,M) \to (A,M^a)$ induces a weak equivalence of mapping
spaces $\Map_{\cP}((A,M^a),(B,N)) \to \Map_{\cP}((A,M),(B,N))$
\end{lemma}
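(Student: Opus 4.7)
The strategy is to reduce to a mapping-space computation entirely within commutative $\cJ$-space monoids. Fix a map $f \colon A \to B$. The pullback definition of $\Map_\cP$ identifies the homotopy fiber of $\Map_\cP((A,M),(B,N)) \to \Map_{\cC\Spsym}(A, B)$ over $f$ with the space $\Map^{/W}(M, N)$ of maps $M \to N$ in $\cC\cS^{\cJ}$ lying over the composite $\Omega^{\cJ}(f)\circ\alpha \colon M \to W := \Omega^{\cJ}(B)$. Since logification does not modify the underlying ring spectrum, the corresponding fiber for $(A, M^a)$ is $\Map^{/W}(M^a, N)$, and it suffices to show that $\Map^{/W}(M^a, N) \to \Map^{/W}(M, N)$ is a weak equivalence for each such $f$.

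The next step exploits the defining pushout of Construction~\ref{cons:construction-logification}. Writing $\widetilde{M} := \alpha^{-1}\GLoneJof(A)$, the object $M^a$ is the pushout of a cofibration $\widetilde{M} \rightarrowtail G$ along $\widetilde{M} \to M$, hence a homotopy pushout. Applying $\Map^{/W}(-, N)$ then yields a homotopy pullback
\[
\xymatrix@-1pc{
\Map^{/W}(M^a, N) \ar[r] \ar[d] & \Map^{/W}(G, N) \ar[d] \\
\Map^{/W}(M, N) \ar[r] & \Map^{/W}(\widetilde{M}, N),
}
\]
so it is enough to show that $\Map^{/W}(G, N) \to \Map^{/W}(\widetilde{M}, N)$ is a weak equivalence.

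The decisive observation is that the structure maps of both $G$ and $\widetilde{M}$ to $W$ factor through the subobject $V := \GLoneJof(B)$, via $G \to \GLoneJof(A) \to V$ and $\widetilde{M} \to \GLoneJof(A) \to V$ respectively. Any map into $N$ lying over $W$ from such an object therefore factors through the pullback $\widetilde{N} := N \times_W V$, so $\Map^{/W}(X, N) = \Map^{/V}(X, \widetilde{N})$ for $X \in \{G, \widetilde{M}\}$. The log condition on $(B, N)$ is precisely the statement that $\widetilde{N} \to V$ is a $\cJ$-equivalence; assuming $(B,N)$ fibrant, the map $N \to W$ is a positive $\cJ$-fibration, so $\widetilde{N} \to V$ is an acyclic fibration by base change. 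Consequently $\Map^{/V}(X, \widetilde{N})$ is contractible for every cofibrant $X$ over $V$, so both $\Map^{/V}(G, \widetilde{N})$ and $\Map^{/V}(\widetilde{M}, \widetilde{N})$ are contractible and the map between them is a weak equivalence.

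The only subtlety I anticipate is bookkeeping around cofibrancy and fibrancy: the pushout must compute the homotopy pushout (automatic because $\widetilde{M} \to G$ is a cofibration), the strict pullback $\widetilde{N}$ must compute the homotopy pullback (requiring $N \to W$ to be a fibration, hence $(B,N)$ fibrant), and the slice mapping spaces must be homotopically meaningful, which we arrange by taking $(A,M)$ cofibrant in the projective model structure on pre-log ring spectra.
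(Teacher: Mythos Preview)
Your argument is correct and rests on the same key observation as the paper: since $G$ is grouplike, the structure maps from $\widetilde M$ and $G$ to $W=\Omega^{\cJ}(B)$ factor through $V=\GLoneJof(B)$, and the log condition makes $\widetilde N = N\times_W V \to V$ an acyclic fibration. The packaging, however, differs. You fiber $\Map_{\cP}$ over $\Map_{\cC\Spsym}(A,B)$ and then use the pushout defining $M^a$ to reduce to slice mapping spaces in $\cC\cS^{\cJ}/V$. The paper instead shows directly that $\Map_{\cP}((A,M^a),(B,N)) \to \Map_{\cP}((A,M),(B,N))$ is an acyclic fibration by verifying that the pushout product of $(A,M)\to(A,M^a)$ with $\partial\Delta^n\to\Delta^n$ has the left lifting property against $(B,N)\to *$; this pushout product is recognized as a cobase change of a map of the form $(\mS^{\cJ}[L],K)\to(\mS^{\cJ}[L],L)$ with $K\to L$ a cofibration and $L$ grouplike, and the log condition then supplies the lift of $K\to L$ against $N\to W$. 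Your route separates the ring-spectrum and monoid contributions more transparently; the paper's route sidesteps some cofibrancy bookkeeping.

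One small gap: your claim that $\Map^{/V}(X,\widetilde N)$ is contractible for $X\in\{G,\widetilde M\}$ uses that $X$ is cofibrant, but $\widetilde M=\alpha^{-1}\GLoneJof(A)$ (a union of components of $M$) need not be cofibrant even when $M$ is, and taking $(A,M)$ cofibrant in the projective model structure does not help here. The fix is immediate: since $\widetilde M\rightarrowtail G$ is a cofibration and $\widetilde N\to V$ is an acyclic fibration, axiom SM7 in the slice $\cC\cS^{\cJ}/V$ shows directly that $\Map^{/V}(G,\widetilde N)\to\Map^{/V}(\widetilde M,\widetilde N)$ is an acyclic fibration (the pullback-hom target collapses because $V$ is terminal in the slice, so $\Map^{/V}(-,V)=*$). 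That is all your homotopy pullback square needs.
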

\begin{proof}
  We may assume that $N \to \Omega^{\cJ}(B)$ is a positive
  fibration. By an adjunction argument~\cite[Lemma
  9.3.6]{Hirschhorn_model}, it is enough to show that the pushout
  product map of
\begin{equation}\label{eq:pushout-product-of-logification} ((A,M) \to (A,M^a)) \tensor (\partial \Delta^n \to \Delta^n) \end{equation} has
the left lifting property with respect to $(B,N) \to *$.

Let $K \to L$ be a cofibration in $\cC\cS^{\cJ}$ with $L$
grouplike. The log condition on $(B,N)$ implies that $N \to
\Omega^{\cJ}(B)$ has the right lifting property with respect to $K \to
L$.  Using the adjunction $(\mS^{\cJ}[-],\Omega^{\cJ})$, this shows
that $(B,N) \to *$ has the right lifting property with respect to
$(\mS^{\cJ}[L],K)\to(\mS^{\cJ}[L],L)$.  The logification $(A,M) \to
(A,M^a)$ can be viewed as the map from the pushout to the lower right
corner in the diagram
\[\xymatrix@-1pc{
  (\mS^{\cJ}[G],\alpha^{-1}(\GLoneJof(A))) \ar[r] \ar[d] & (A,M) \ar[d]\\
  (\mS^{\cJ}[G],G) \ar[r] & (A,M^a).  }\] This shows that the pushout
product map of~\eqref{eq:pushout-product-of-logification} can be
obtained as the cobase change of the pushout product map of
\begin{equation}\label{eq:pushout-product-of-mSJs} \left((\mS^{\cJ}[G],\alpha^{-1}(\GLoneJof(A))) \to
    (\mS^{\cJ}[G],G)\right) \tensor (\partial \Delta^n \to \Delta^n)
\end{equation}
along the map induced by
$\left((\mS^{\cJ}[G],\alpha^{-1}(\GLoneJof(A))) \to
  (A,M)\right)\tensor \Delta^n$.  So it is enough to show that this
pushout product map has the lifting property w.r.t.  $(B,N)\to *$. One
can check that it is the map of pre-log ring spectra
$(\mS^{\cJ}[L],K)\to(\mS^{\cJ}[L],L)$ associated with the map
\[ K = \alpha^{-1}(\GLoneJof(A)) \tensor \Delta^n \boxtimes_{
  \alpha^{-1}(\GLoneJof(A))\tensor \partial \Delta^n} G
\tensor \partial \Delta^n \to G\tensor \Delta^n = L.\] Since
$\cC\cS^{\cJ}$ is a simplicial model category, $K \to L$ is a
cofibration.  The $G\tensor \Delta^n$ is grouplike because $G$ is. So
the claim follows by the right lifting property for $(B,N)\to *$ established
above.
\end{proof}
\begin{remark}
  The argument in the proof of the last lemma can be used show that
  the (pre-fibrant) log ring spectra may be identified with the local
  objects of a left Bousfield localization of the category pre-log
  ring spectra. The fibrant replacement in the localization is then a
  model for the logification. We omit the details because this
  \emph{log model structure} is not needed in the present paper.
\end{remark}

We now look at the square
\begin{equation}\label{eq:logification-of-map}\xymatrix@-1pc{
    (R,P) \ar[r] \ar[d] & (R,P^a) \ar[d] \\
    (A,M) \ar[r] & (A,M^a) }\end{equation}
induced by the logification of $(R,P)$ and $(A,M)$. The following lemma
is the graded analogue of~\cite[Lemma 11.9]{Rognes_TLS}. In the lemma,
we implicitly assume that the vertical arrows in~\eqref{eq:logification-of-map}
are cofibrations of fibrant objects. (This can always be achieved up to 
weak equivalence.)

\begin{lemma}\label{lem:derivations-invariant-logification}
  Let $X$ be a fibrant $A$-module. The maps in the
  square~\eqref{eq:logification-of-map} induce weak equivalences
  \[\xymatrix@-1pc{
\Der_{(R,P)}((A,M),X) \ar[r]^-{\sim}& \Map^{(R,P)}_{(A,M^a)}((A,M),(A\wdgfib X,(M^a+X)^{\cJ})) \\
\Der_{(R,P^a)}((A,M^a),X)\ar[r]^-{\sim}&\Der_{(R,P)}((A,M^a),X). \ar[u]^-{\sim}
} \]
\end{lemma}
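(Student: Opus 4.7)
The lemma asserts a zigzag of three weak equivalences, and my plan is to deal with each separately. The common principle is that each relative mapping space $\Der$ is the homotopy fiber of an ordinary mapping space of pre-log ring spectra over appropriate data of under- and over-structure maps (as in the decomposition~\eqref{eq:log-der-decomposition}). Consequently, producing a weak equivalence on the ambient ordinary mapping spaces that respects these structure maps will force the desired weak equivalence on the homotopy fibers. The two main inputs are Lemma~\ref{lem:hty-pullback-with-square-zero} and Lemma~\ref{lem:map-into-log-logification-inv}.

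For the top horizontal equivalence, the source $(A,M)$ and under-object $(R,P)$ are unchanged, while the target changes from $(A\wdgfib X, (M+X)^{\cJ})$ over $(A,M)$ to $(A\wdgfib X, (M^a+X)^{\cJ})$ over $(A,M^a)$. Reading the homotopy pullback square of Lemma~\ref{lem:hty-pullback-with-square-zero} as a square of pre-log ring spectra, it identifies $(A\wdgfib X, (M+X)^{\cJ})$ with the homotopy pullback of $(A\wdgfib X, (M^a+X)^{\cJ}) \to (A,M^a) \leftarrow (A,M)$. By the universal property of the homotopy pullback, giving a map out of $(A,M)$ under $(R,P)$ and over $(A,M)$ into $(A\wdgfib X, (M+X)^{\cJ})$ is equivalent to giving a map out of $(A,M)$ under $(R,P)$ and over $(A,M^a)$ into $(A\wdgfib X, (M^a+X)^{\cJ})$, which is the content of the first weak equivalence.

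For the right vertical equivalence, the under-object $(R,P)$, the over-object $(A,M^a)$, and the target are fixed, while the source changes from $(A,M^a)$ (with identity structure map) to $(A,M)$ (with structure map given by the logification). Since the lemma preceding this one shows that $(A\wdgfib X, (M^a+X)^{\cJ})$ is a log ring spectrum, Lemma~\ref{lem:map-into-log-logification-inv} yields a weak equivalence $\Map_{\cP}((A,M^a),(A\wdgfib X, (M^a+X)^{\cJ})) \to \Map_{\cP}((A,M),(A\wdgfib X, (M^a+X)^{\cJ}))$ by precomposition with $(A,M) \to (A,M^a)$. Since this precomposition is compatible with both the under-map from $(R,P)$ and the projection onto $(A,M^a)$, it descends to a weak equivalence of the corresponding homotopy fibers, which are precisely the two relative mapping spaces.

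For the bottom horizontal equivalence, only the under-object changes, from $(R,P^a)$ to $(R,P)$, while $(A,M^a)$ and $(A\wdgfib X, (M^a+X)^{\cJ})$ remain fixed; both are log ring spectra. Applied to the logification $(R,P) \to (R,P^a)$, Lemma~\ref{lem:map-into-log-logification-inv} gives a weak equivalence $\Map_{\cP}((R,P^a),(A\wdgfib X, (M^a+X)^{\cJ})) \to \Map_{\cP}((R,P),(A\wdgfib X, (M^a+X)^{\cJ}))$. This sits in a commuting square of base projections for the defining homotopy fibers of the two $\Der$ spaces, with identity on the total-space side and on the over-side, and the above weak equivalence on the under-side; taking homotopy fibers yields the required equivalence. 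The main point to verify throughout is that the structural basepoints match up under the constructed weak equivalences, which follows from the naturality of the maps induced by the logifications on source, target, and under-object.
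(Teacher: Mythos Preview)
Your proof is correct and follows essentially the same approach as the paper: the top horizontal equivalence via the homotopy pullback of Lemma~\ref{lem:hty-pullback-with-square-zero}, and the other two equivalences via Lemma~\ref{lem:map-into-log-logification-inv} applied to the logifications $(A,M)\to(A,M^a)$ and $(R,P)\to(R,P^a)$ respectively. The paper's proof is terser, simply citing these two lemmas, while you spell out the homotopy-fiber interpretation and the compatibility with the under/over structure; this added detail is fine and fills in exactly what the paper leaves implicit.
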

\begin{proof}
  The upper horizontal map is a weak equivalence by the homotopy
  pullback established in
  Lemma~\ref{lem:hty-pullback-with-square-zero}. The two other maps
  are weak equivalences by
  Lemma~\ref{lem:map-into-log-logification-inv}.
\end{proof}

Proposition~\ref{prop:logTAQ-correpresents-Der} and the last lemma
now easily imply the counterpart of~\cite[Corollary 11.23]{Rognes_TLS}:
\begin{corollary}\label{cor:log-TAQ-log-inv} The maps in~\eqref{eq:logification-of-map} induce a zig-zag of stable
  equivalences of $A$-modules between $\TAQ^{(R,P)}(A,M)$,
  $\TAQ^{(R,P)}(A,M^a)$, and $\TAQ^{(R,P^a)}(A,M^a)$.\qed
\end{corollary}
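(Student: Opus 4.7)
The plan is to combine Proposition \ref{prop:logTAQ-correpresents-Der} with Lemma \ref{lem:derivations-invariant-logification} via a Yoneda-style argument in the homotopy category of $A$-modules. First, I would spell out the two natural maps that make up the zig-zag. The logification $(A,M)\to(A,M^a)$ is a map of graded pre-log ring spectra under $(R,P)$, and hence by functoriality of the homotopy pushout defining $\TAQ^{(R,P)}(-)$ in Definition \ref{def:graded-log-TAQ} (the ingredients $\gamma(M)/\gamma(P)$, $\mS^{\cJ}[M]$ and $\TAQ^R(A)$ all receive the evident induced map) it induces a map
\[ \TAQ^{(R,P)}(A,M) \longrightarrow \TAQ^{(R,P)}(A,M^a). \]
Similarly, the factorization $(R,P)\to(R,P^a)\to(A,M^a)$ makes the source of $\TAQ^{(R,P^a)}(A,M^a)$ ``larger'', so by the same functorial analysis (and in analogy with the transitivity sequence for ordinary $\TAQ$ used already in the proof of \eqref{eq:hocofib-sequence-graded-log-TAQ}) we obtain a map
\[ \TAQ^{(R,P)}(A,M^a) \longrightarrow \TAQ^{(R,P^a)}(A,M^a). \]
These two maps form the zig-zag (actually a composable pair) in question.

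Next I would verify that each of these maps is a stable equivalence of $A$-modules by the usual Yoneda criterion: a map between cofibrant $A$-modules is a stable equivalence iff it induces a weak equivalence on $\Map_{A\text{-}\Mod}(-,X)$ for every fibrant $A$-module $X$. Cofibrancy of the three $\TAQ$-objects is built into Definition \ref{def:graded-log-TAQ} (they are constructed as homotopy pushouts of cofibrant $A$-modules coming from Basterra's $\TAQ$ and from the smash of $A$ with the connective spectrum $\gamma(M)/\gamma(P)$), so these mapping spaces compute derived mapping spaces. Applying Proposition \ref{prop:logTAQ-correpresents-Der} to each of the three corners identifies these derived mapping spaces with $\Der_{(R,P)}((A,M),X)$, $\Der_{(R,P)}((A,M^a),X)$, and $\Der_{(R,P^a)}((A,M^a),X)$ respectively, and the two natural maps above are by construction intertwined with the maps on derivation spaces induced by the square \eqref{eq:logification-of-map}.

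Finally, Lemma \ref{lem:derivations-invariant-logification} tells us exactly that these induced maps on derivation spaces are weak equivalences, since each of them fits into the commutative diagram of that lemma (via the common intermediate space $\Map^{(R,P)}_{(A,M^a)}((A,M),(A\wdgfib X,(M^a+X)^{\cJ}))$, which is a weak equivalence target for every derivation space involved). Therefore the two maps of $A$-modules constructed in the first paragraph induce weak equivalences on mapping spaces into every fibrant $A$-module $X$, and are thus stable equivalences.

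The main obstacle is bookkeeping rather than substance: one has to check that the maps on $\TAQ$-spectra produced by functoriality of Definition \ref{def:graded-log-TAQ} are genuinely the Yoneda-duals of the maps on derivation spaces appearing in Lemma \ref{lem:derivations-invariant-logification}. This is essentially naturality of the chain of identifications \eqref{eq:log-der-decomposition}, Proposition \ref{prop:TAQ-correpresents-Der}, Corollary \ref{cor:correprsenting_one_corner}, and Proposition \ref{prop:CSJ-derivations}; each step is natural in $(R,P)\to(A,M)$, and once all three identifications are laid out in a single commutative diagram, the zig-zag of stable equivalences drops out.
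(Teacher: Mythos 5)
Your proposal is correct and follows essentially the same route as the paper, which derives the corollary directly from Proposition~\ref{prop:logTAQ-correpresents-Der} together with Lemma~\ref{lem:derivations-invariant-logification} via exactly this Yoneda-type argument in the homotopy category of $A$-modules (the paper leaves the functoriality and naturality bookkeeping implicit, which you spell out). Your observation that the comparison of $\Der_{(R,P)}((A,M),X)$ and $\Der_{(R,P)}((A,M^a),X)$ passes through the intermediate mapping space of Lemma~\ref{lem:derivations-invariant-logification} is the right way to handle the fact that there is no strict map between these two derivation spaces.
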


We have now developed all tools for the proof of the main theorem of
this section:
\begin{proof}[Proof of Theorem~\ref{thm:ell-to-k-formally-etale}]
  By Corollary~\ref{cor:log-TAQ-log-inv},
  Proposition~\ref{prop:log-maps-ell-ku}, and
  Theorem~\ref{thm:all-pre-log-on-A} it is enough to show that the map
  of pre-log ring spectra
  \[ (\ell_p, D(v_1)) \to (ku_{p}, D(u)) \] is formally graded log
  \'{e}tale. We use the criterion of
  Corollary~\ref{cor:etaleness-criterion}. Proposition~\ref{prop:ku-p-as-hty-pushout}
  implies that the map from the homotopy pushout $C$ of
  \[ \ell_p \ot \mS^{\cJ}[D(v_1)] \to \mS^{\cJ}[D(u)]\] to $ku_{p}$ is
  a stable equivalence. Hence $\TAQ^C(ku_{p})$ is contractible.  Using
  Lemma~\ref{lem:Cx-to-Dx_on-hJ}, it is easy to see that both
  $\gamma(D(v_1))$ and $\gamma(D(u))$ have the homotopy type of the
  sphere spectrum, and that the map $\gamma(D(v_1)) \to \gamma(D(u))$
  is the multiplication by $p-1$. Since $ku_{p}$ is $p$-complete, the
  induced map $A \sm \gamma(D(v_1)) \to A \sm \gamma(D(u))$ is a
  stable equivalence.  The same arguments apply in the $p$-local case.
\end{proof}

\section{Commutative \texorpdfstring{$\cJ$}{J}-space monoid derivations}\label{sec:CSJ-derivations}
In this section we give the proof of
Proposition~\ref{prop:identification_1plusX-J} and
Proposition~\ref{prop:CSJ-derivations}. These relate the space of
commutative $\cJ$-space monoid derivations $\Map^P_M(M,(M+X)^{\cJ})$ with the
$A$-module spectrum $X$ used to define $ (M+X)^{\cJ}$.

Let $A$ be a positive fibrant in $\cC\Spsym$ and let $X$ be an
$A$-module.  Similarly as the commutative $\cJ$-space monoid
$(1+X)^{\cJ}$ introduced in Definition~\ref{def:one-plus-X-J}, we
obtain commutative $\cI$-space monoid $(1+X)^{\cI}$ as cofibrant
replacement of the pullback of
\[ \xymatrix@1{ U^{\cI} \ar[r]& \GLoneIof(A) & \GLoneIof(A\wdgfib X)
  \ar[l]_-{(\varepsilon_X)_*}}.\] As indicated in
Section~\ref{sec:CSJ-group-compl} and explained in detail in~\cite[\S
3]{Sagave_spectra-of-units}, the commutative $\cJ$-space monoid
$(1+X)^{\cJ}$ gives rise to a $\Gamma$-space
$\gamma((1+X)^{\cJ})$. This functor $\gamma$ is in turn motivated by a
functor $\gamma\colon \cC\cS^{\cI} \to \GammaS$ which is (under a
different name) considered by Schlichtkrull
in~\cite{Schlichtkrull_units} and the author and Schlichtkrull
in~\cite{Sagave-S_group-compl}.  The latter functor provides a
$\Gamma$-space $\gamma((1+X)^{\cI})$ associated with $(1+X)^{\cI}$.
\begin{lemma}\label{lem:one-plus-X-J-I-reduction}
The $\Gamma$-spaces  $\gamma((1+X)^{\cI})$ and $\gamma((1+X)^{\cJ})$
are level equivalent. 
\end{lemma}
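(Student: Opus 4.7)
The plan is to construct a natural comparison map of $\Gamma$-spaces $\gamma((1+X)^{\cI}) \to \gamma((1+X)^{\cJ})$ coming from the comparison between $\cI$-spaces and $\cJ$-spaces, and then verify that this map is a level equivalence by exploiting the very special $\Gamma$-space structure of both sides.

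First, I would use a symmetric monoidal functor $\iota\colon \cI \to \cJ$ sending $\bld{m} \mapsto (\bld{m},\bld{m})$ and a morphism $\beta\colon \bld{m}\to\bld{n}$ to $(\beta,\beta,\id)$, taking the identity bijection on the common empty complement. This functor carries $U^{\cI}$ to $U^{\cJ}$, preserves the $\boxtimes$-product, and is compatible with the underlying-space functors $\OmegaI$ and $\OmegaJ$ applied to commutative symmetric ring spectra. Hence it induces natural maps $\GLoneIof(A) \to \GLoneJof(A)$ and $\GLoneIof(A\wdgfib X) \to \GLoneJof(A\wdgfib X)$ fitting into a morphism from the pullback square defining $(1+X)^{\cI}$ into the pullback square defining $(1+X)^{\cJ}$. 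Taking cofibrant replacements and applying $\gamma$ produces the comparison map $\gamma((1+X)^{\cI}) \to \gamma((1+X)^{\cJ})$.

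Second, I would exploit that both $\Gamma$-spaces are \emph{very special}. Since $(1+X)^{\cI}$ is augmented over $U^{\cI}$ and $(1+X)^{\cJ}$ is augmented over $U^{\cJ}$, both underlying commutative monoids are grouplike: the component monoids $\pi_0$ of their homotopy colimits arise as kernels of homomorphisms into the groups $\pi_0(A)^{\times}$ of units of $\pi_0(A)$. For a map of very special $\Gamma$-spaces, level equivalence reduces to equivalence at $1^+$, since $F(n^+)\simeq F(1^+)^n$ naturally. At that level, the comparison becomes the natural map between the homotopy fibers of $\GLoneIof(A\wdgfib X)_{h\cI} \to \GLoneIof(A)_{h\cI}$ and $\GLoneJof(A\wdgfib X)_{h\cJ} \to \GLoneJof(A)_{h\cJ}$ taken over the basepoints specified by $U^{\cI}$ and $U^{\cJ}$ respectively.

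Third, the main technical step is to show that this map of fibers is a weak equivalence. The key observation is that the ``extra'' components of $\GLoneJof(A)_{h\cJ}$ beyond $\GLoneIof(A)_{h\cI}$ correspond to the invertible homotopy classes in $\pi_n(A)$ for $n\neq 0$, and since $\GLoneJof(A)_{h\cJ}$ is a grouplike simplicial monoid, all of its path components are homotopy equivalent. Thus the homotopy fiber of $\GLoneJof(A\wdgfib X)_{h\cJ}\to\GLoneJof(A)_{h\cJ}$ over the identity component agrees with the corresponding $\cI$-space fiber. The main obstacle is carrying out this comparison carefully and confirming that $\iota$ produces the correct basepoint identification between the fibers; once this bookkeeping is done, the equivalence at level $1^+$ follows, and the very special structure upgrades this to a level equivalence of $\Gamma$-spaces.
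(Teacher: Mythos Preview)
Your outline matches the paper's proof closely: the comparison map is built from the diagonal $\Delta\colon\cI\to\cJ$, $\bld{m}\mapsto(\bld{m},\bld{m})$ (the paper uses the restriction $\Delta^*$ together with the identification $\Delta^*\GLoneJof(A)\cong\GLoneIof(A)$ and the natural transformation $(\Delta^*(-))_{h\cI}\to(-)_{h\cJ}$), and specialness reduces the question to level~$1^+$, where one compares homotopy fibers over the contractible spaces $U^{\cI}_{h\cI}$ and $U^{\cJ}_{h\cJ}$.

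The one place your sketch is thin is exactly your ``third step.'' The observation that all path components of a grouplike simplicial monoid are equivalent does not by itself show that the induced map of vertical homotopy fibers is an equivalence; what is actually needed is that the square
\[\xymatrix@-1pc{
(\GLoneIof(A\wdgfib X))_{h\cI}\ar[r]\ar[d] & (\GLoneJof(A\wdgfib X))_{h\cJ}\ar[d]\\
(\GLoneIof(A))_{h\cI}\ar[r] & (\GLoneJof(A))_{h\cJ}
}\]
is homotopy cartesian. The paper establishes this by looking at the \emph{horizontal} homotopy fibers and invoking \cite[Proposition~4.1]{Sagave_spectra-of-units}, which controls the fiber of the comparison map $(\Delta^*N)_{h\cI}\to N_{h\cJ}$ for commutative $\cJ$-space monoids~$N$. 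Your heuristic points in the right direction, but the substantive content resides in that cited result rather than in general facts about grouplike monoids; you should expect to need an input of this kind to complete the argument.
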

\begin{proof}
  The functors $\gamma$ send $\cI$- and $\cJ$-equivalences to level
  equivalences of $\Gamma$-spaces. Hence it is enough to show the claim
  for the actual pullbacks used to define $(1+X)^{\cI}$ and
  $(1+X)^{\cJ}$. In this proof, we denote them also by $(1+X)^{\cI}$
  and $(1+X)^{\cJ}$.

  By~\cite[Lemma 2.12]{Sagave_spectra-of-units}, the strong symmetric
  monoidal functor $\Delta\colon \cI \to \cJ$ sending $\bld{m}$ to
  $(\bld{m},\bld{m})$ has the property $\Delta^*\GLoneJof(A) \iso
  \GLoneIof(A)$. We obtain a commutative diagram of commutative $\cI$-space monoids
\[\xymatrix@-1pc{
  \Delta^*U^{\cJ} \ar[r]  & \Delta^*\GLoneJof(A)  & \Delta^*\GLoneJof(A\wdgfib X)  \ar[l] \\
  U^{\cI} \ar[r] \ar[u]& \GLoneIof(A)\ar[u]& \GLoneIof(A\wdgfib
  X)\ar[l]\ar[u].  }\] Using the natural transformation
$(\Delta^*(-))_{h\cI}\to (-)_{h\cJ}$ induced by $\Delta$, the fact
that $\Delta^*$ commutes with pullbacks and~\cite[Lemma
3.16]{Sagave_spectra-of-units} show that we get a map $c_X\colon
\gamma((1+X)^{\cI}) \to \gamma((1+X)^{\cJ})$.  To see that it is a
level equivalence, it is enough to that its evaluation at $1^+$ is a
weak equivalence since both $\Gamma$-spaces are special. The natural
transformation $(\Delta^*(-))_{h\cI}\to (-)_{h\cJ}$ provides a
commutative square
  \begin{equation}\label{eq:one-plus-X-J-I-reduction}\xymatrix@-1pc{
      (\GLoneIof(A\wdgfib X))_{h\cI} \ar[r]^-{\iso} \ar[d] & \left(\Delta^*\GLoneJof(A\wdgfib X)\right)_{h\cI}\ar[r] \ar[d] &(\GLoneJof(A\wdgfib X))_{h\cJ} \ar[d]\\
      (\GLoneIof(A))_{h\cI} \ar[r]^-{\iso} &
      \left(\Delta^*\GLoneJof(A)\right)_{h\cI}\ar[r]
      &(\GLoneJof(A))_{h\cJ}.  }
  \end{equation} 
  Since both $U^{\cI}_{h\cI}$ and $ U^{\cJ}_{h\cJ}$ are contractible,
  $c_X(1^+)$ will be a weak equivalence if the right hand square
  in~\eqref{eq:one-plus-X-J-I-reduction} is homotopy cartesian. This
  holds because the map of horizontal homotopy fibers in the outer
  square is a weak equivalence by~\cite[Proposition
  4.1]{Sagave_spectra-of-units}.
\end{proof}
\begin{proof}[Proof of Proposition~\ref{prop:identification_1plusX-J}]
  It is shown in~\cite[Lemma~11.2]{Rognes_TLS} that the
  $\Gamma$-space associated with the commutative $\cI$-space monoid
  $(1+X)^{\cI}$ is stably equivalent to the connective cover of the
  underlying spectrum of $X$. Using this, the $\cJ$-space version
  follows from Lemma~\ref{lem:one-plus-X-J-I-reduction}.
\end{proof}

We start to work towards the proof of
Proposition~\ref{prop:CSJ-derivations}. Recall from
Construction~\ref{cons:MplusXJ} that $(M+X)^{\cJ}$ is fibrant
replacement of $M \boxtimes (1+X)^{\cJ}$ relative to $M$. In
particular, it is an object under $P$ and over $M$. We will again use
the functor $\gamma \colon \cC\cS^{\cJ} \to \GammaS$
of~\eqref{eq:functor-gamma} and write $(\gamma(1+X)^{\cJ})^{\fib}$ for
a level fibrant replacement of the (very special) $\Gamma$-space
associated with $(1+X)^{\cJ}$. Since $\GammaS$ has a zero object,
$(1+X)^{\cJ}$ gives rise to a second $\Gamma$-space \[\gamma(P) \to
\gamma(M)\times (\gamma(1+X)^{\cJ})^{\fib} \xrightarrow{\text{pr}}
\gamma(M)\] under $\gamma(P)$ and over $\gamma(M)$.
\begin{lemma}\label{lem:gamma-M-plus-X-J-as-product}
  With respect to the maps from $\gamma(P)$ and to $\gamma(M)$
  specified above, $\gamma(M+X)^{\cJ}$ and $\gamma(M)\times
  (\gamma(1+X)^{\cJ})^{\fib} $ are level equivalent in
  $\gamma(P)\!\downarrow\!\GammaS \!\downarrow\!
  \gamma(M)$.\end{lemma}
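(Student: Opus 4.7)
The plan is first to reduce from $(M+X)^{\cJ}$ to the coproduct $M \boxtimes (1+X)^{\cJ}$ using the acyclic cofibration exhibited in Construction~\ref{cons:MplusXJ}. Since the map $M \boxtimes (1+X)^{\cJ} \to (M+X)^{\cJ}$ is a $\cJ$-equivalence of commutative $\cJ$-space monoids that lies under $P$ (via the coproduct inclusion composed with $P \to M$) and over $M$, and $\gamma$ sends $\cJ$-equivalences to level equivalences of $\Gamma$-spaces, the claim reduces to exhibiting a level equivalence
\[
\gamma(M \boxtimes (1+X)^{\cJ}) \; \simeq \; \gamma(M) \times (\gamma(1+X)^{\cJ})^{\fib}
\]
in $\gamma(P)\!\downarrow\!\GammaS\!\downarrow\!\gamma(M)$.

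I would construct the comparison map with two coordinates. The first is $\gamma$ applied to the composite $M \boxtimes (1+X)^{\cJ} \to M \boxtimes U^{\cJ} \cong M$ obtained from the augmentation $(1+X)^{\cJ} \to U^{\cJ}$. For the second coordinate, the crucial input is that $(1+X)^{\cJ}$ is grouplike (being the cofibrant replacement of a pullback of grouplike commutative $\cJ$-space monoids), so that $\gamma((1+X)^{\cJ})$ is very special. Using the construction of $\gamma$ in~\cite[\S 3]{Sagave_spectra-of-units}, the coproduct inclusion $\gamma((1+X)^{\cJ}) \to \gamma(M \boxtimes (1+X)^{\cJ})$ admits a natural retraction after fibrant replacement, yielding the second coordinate $\gamma(M \boxtimes (1+X)^{\cJ}) \to (\gamma(1+X)^{\cJ})^{\fib}$. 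Compatibility with the structure maps from $\gamma(P)$ and the projections to $\gamma(M)$ is automatic: the map from $\gamma(P)$ on both sides factors through $\gamma(M)$ via the coproduct inclusion, which corresponds to the canonical section of the first projection from $\gamma(M) \times (\gamma(1+X)^{\cJ})^{\fib}$.

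To show this comparison is a level equivalence, I observe that both sides are special $\Gamma$-spaces, so by the Segal maps it suffices to verify a weak equivalence at $1^+$. Evaluated there, the comparison is precisely the monoidal structure map
\[
(M \boxtimes (1+X)^{\cJ})_{h\cJ} \;\to\; M_{h\cJ} \times ((1+X)^{\cJ})_{h\cJ}
\]
of~\eqref{eq:monoidal-structure-map}, which is a weak equivalence by Lemma~\ref{lem:monoidal-structure-map-equiv} since $(1+X)^{\cJ}$ is cofibrant and hence flat. The main obstacle is producing the second coordinate of the comparison rigorously at each level $n^+$, since no strict map $\gamma(M \boxtimes (1+X)^{\cJ}) \to (\gamma(1+X)^{\cJ})^{\fib}$ is visible without unpacking the explicit bar-like construction of $\gamma$ from~\cite{Sagave_spectra-of-units}; this is where the grouplike condition on $(1+X)^{\cJ}$ is essential, as it ensures that the relevant retraction exists levelwise. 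Once the second coordinate is in hand, the reduction to Lemma~\ref{lem:monoidal-structure-map-equiv} at level $1^+$ completes the argument.
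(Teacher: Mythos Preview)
Your reduction to $M\boxtimes(1+X)^{\cJ}$ and your identification of Lemma~\ref{lem:monoidal-structure-map-equiv} as the key input are both correct. The problem is the direction of your comparison map and, consequently, the existence of your ``second coordinate.''

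The monoidal structure map~\eqref{eq:monoidal-structure-map} goes from the product to the homotopy colimit of the $\boxtimes$-product, i.e.\ $M_{h\cJ}\times N_{h\cJ}\to (M\boxtimes N)_{h\cJ}$, not the other way. So at level $1^+$ there is no natural map $(M\boxtimes N)_{h\cJ}\to M_{h\cJ}\times N_{h\cJ}$ to appeal to. More seriously, the retraction $\gamma(M\boxtimes N)\to(\gamma N)^{\fib}$ you need for the second coordinate does not follow from $N=(1+X)^{\cJ}$ being grouplike. What the grouplike condition gives you is that $\gamma(N)$ is very special; it says nothing about splitting off $\gamma(N)$ from $\gamma(M\boxtimes N)$ when $M$ has no augmentation to $U^{\cJ}$. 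Your own acknowledgment of this ``main obstacle'' is accurate, but the grouplike hypothesis does not resolve it.

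The paper avoids this entirely by running the comparison in the natural direction. It applies~\eqref{eq:monoidal-structure-map} levelwise to obtain a map of $\Gamma$-spaces $\gamma(M)\times\gamma(N)\to\gamma(M\boxtimes N)$, which is a level equivalence by Lemma~\ref{lem:monoidal-structure-map-equiv}. The remaining task is then only to relate $\gamma(N)$ and $(\gamma N)^{\fib}$ compatibly with the map to $\gamma(M)$. Here the augmentation $N\to U^{\cJ}$ (which \emph{does} exist, unlike an augmentation of $M$) is what matters: factor $\gamma(N)\to\gamma(U^{\cJ})$ as an acyclic cofibration followed by a fibration $\gamma(N)\stackrel{\sim}{\rightarrowtail}\gamma(N)'\twoheadrightarrow\gamma(U^{\cJ})$, take the fiber as a model for $(\gamma N)^{\fib}$, and assemble the zig-zag
\[
\gamma(M)\times(\gamma N)^{\fib}\xrightarrow{\;\sim\;}\gamma(M)\times\gamma(N)'\xleftarrow{\;\sim\;}\gamma(M)\times\gamma(N)\xrightarrow{\;\sim\;}\gamma(M\boxtimes N),
\]
all lying over $\gamma(M)$ (and hence under $\gamma(P)$). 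No retraction is needed.
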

\begin{proof}
  We write $N=(1+X)^{\cJ}$. A levelwise application of the
  map~\eqref{eq:monoidal-structure-map} provides a map of
  $\Gamma$-spaces $\gamma(M)\times\gamma(N) \to \gamma(M\boxtimes
  N)$. (See the proof of \cite[Lemma 7.22]{Sagave_spectra-of-units}
  for more details about this map.) By
  Lemma~\ref{lem:monoidal-structure-map-equiv}, the map is a level
  equivalence since $N$ is flat by construction.  Using that $N$ is
  augmented over $U^{\cJ}$, we define $\gamma(N)'$ by the
  factorization $\xymatrix@1{\gamma(N)\;\ar@{>->}[r]^{\sim}&
    \;\gamma(N)'\; \ar@{->>}[r] & \;\gamma(U^{\cJ})}$ and observe that
  since $\gamma(U^{\cJ})$ is contractible, the fiber
  $\gamma(N)^{\fib}$ of $\gamma(N)' \to \gamma(U^{\cJ})$ is a possible
  choice for a fibrant replacement of $\gamma(N)$. We obtain a
  commutative diagram
  \[\xymatrix@-1pc{
    \gamma(M)\times\gamma(N)^{\fib} \ar[r]^-{\sim}\ar[d] &
    \gamma(M)\times\gamma(N)' \ar[d]
    & \gamma(M)\times\gamma(N) \ar[d] \ar[l]_-{\sim}  \ar[r]^{\sim}& \gamma(M\boxtimes N) \ar[d]\\
    \gamma(M) \ar[r]^-{\sim} & \gamma(M) \times \gamma(U^{\cJ})
    \ar[r]^{=} & \gamma(M) \times \gamma(U^{\cJ}) \ar[r]^-{\sim} &
    \gamma(M). }\] The resulting equivalence between
  $\gamma(M)\times\gamma(N)^{\fib}$ and $\gamma(M\boxtimes N)$ lies
  over $\gamma(M)$ because the bottom composition is the identity.  It
  is easy to see that we get maps under $\gamma(M)$ and hence under
  $\gamma(P)$. The claim follows using the $\cJ$-equivalence $M\boxtimes
  (1+X)^{\cJ} \to (M+X)^{\cJ}$.
\end{proof}
Writing $\gamma(M)/\gamma(P)$ for a cofibrant model of the homotopy
cofiber of the map of $\Gamma$-spaces $\gamma(P)\to\gamma(M)$, we can
now use the relation between commutative $\cJ$-space monoids and
$\Gamma$-spaces established in~\cite[\S 7]{Sagave_spectra-of-units} to
express commutative $\cJ$-space monoid derivations by mapping spaces
in $\GammaS$:
\begin{lemma}\label{lem:Map-CSJ-GammaS} There is a natural weak
  equivalence between the mapping spaces
  \[\Map^P_M(M,(M+X)^{\cJ})\quad \text{and} \quad
  \Map_{\GammaS}\left(\gamma(M)/\gamma(P),(\gamma(1+X)^{\cJ})^{\fib}\right).\]
\end{lemma}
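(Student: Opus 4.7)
The plan is to translate the mapping space on the left from $\cC\cS^{\cJ}$ into a mapping space in $\GammaS$ using the functor $\gamma$, and then to exploit the product decomposition of $\gamma((M+X)^{\cJ})$ provided by Lemma~\ref{lem:gamma-M-plus-X-J-as-product} together with the universal property of a homotopy cofiber.

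First I would appeal to the comparison results of Section~7 of~\cite{Sagave_spectra-of-units}. The map $(M+X)^{\cJ} \to M$ is a fibration whose homotopy fiber is (weakly equivalent to) the grouplike commutative $\cJ$-space monoid $(1+X)^{\cJ}$, so any map $M \to (M+X)^{\cJ}$ over $M$ is determined up to homotopy by ``grouplike data.'' Consequently the natural comparison map
\[
\Map^P_M(M,(M+X)^{\cJ}) \;\longrightarrow\; \Map^{\gamma(P)}_{\gamma(M)}\!\bigl(\gamma(M),\gamma((M+X)^{\cJ})^{\fib}\bigr)
\]
is a weak equivalence. I view this as the main obstacle: one must use the model-categorical analysis of $\gamma$ in the under/over category to see that, although $M$ itself need not be grouplike, the relative structure of $(M+X)^{\cJ}$ over $M$ is grouplike enough for $\gamma$ to fully capture mapping spaces. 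Concretely, I would factor $M \to (M+X)^{\cJ}$ through $M \boxtimes (1+X)^{\cJ}$ and reduce to computing $\Map^{U^{\cJ}}_{U^{\cJ}}(U^{\cJ},(1+X)^{\cJ}\text{-bundle})$-style spaces, which the group completion machinery of~\cite{Sagave_spectra-of-units} handles.

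Next I would substitute the level equivalence
\[
\gamma((M+X)^{\cJ}) \;\simeq\; \gamma(M) \times (\gamma(1+X)^{\cJ})^{\fib}
\]
in $\gamma(P)\!\downarrow\!\GammaS\!\downarrow\!\gamma(M)$ from Lemma~\ref{lem:gamma-M-plus-X-J-as-product}. Writing $Y = (\gamma(1+X)^{\cJ})^{\fib}$ and observing that the target is a product over $\gamma(M)$ with projection to $\gamma(M)$ as its structure map, a map $\gamma(M) \to \gamma(M)\times Y$ over $\gamma(M)$ is the same datum as a map $\gamma(M) \to Y$ of $\Gamma$-spaces. This reduces the mapping space to
\[
\{\,\varphi\colon \gamma(M)\to Y \mid \varphi|_{\gamma(P)} = 0 \,\}
\]
with the appropriate simplicial/homotopical enrichment, where the vanishing condition comes from the fact that the map $\gamma(P) \to \gamma(M)\times Y$ corresponding to the prescribed $(R,P)$-algebra structure on $(A,M+X)^{\cJ}$ has trivial projection to $Y$ (this is essentially the square-zero nature of $(M+X)^{\cJ}$ relative to $M$).

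Finally, by the universal property of the homotopy cofiber in $\GammaS$, the space of $\gamma(M)\to Y$ whose restriction along $\gamma(P)\to\gamma(M)$ is null is naturally equivalent to $\Map_{\GammaS}(\gamma(M)/\gamma(P),Y)$. Chaining the three equivalences above produces the desired natural weak equivalence between $\Map^P_M(M,(M+X)^{\cJ})$ and $\Map_{\GammaS}(\gamma(M)/\gamma(P),(\gamma(1+X)^{\cJ})^{\fib})$. Naturality in $P\to M$, in $X$, and in $A$ is visible from the construction, since every step uses only functorial replacements.
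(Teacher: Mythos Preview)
Your outline follows the paper's proof closely: pass to $\Gamma$-spaces via $\gamma$, use the product decomposition of Lemma~\ref{lem:gamma-M-plus-X-J-as-product}, strip off the $\gamma(M)$ factor, and then invoke the cofiber. Steps 2--4 are fine and match the paper.

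The gap is in step 1, which you correctly flag as the main obstacle but do not actually resolve. Your proposed justification---that the fiber $(1+X)^{\cJ}$ is grouplike, so maps over $M$ are ``determined by grouplike data,'' and that one can reduce to ``$\Map^{U^{\cJ}}_{U^{\cJ}}(U^{\cJ},\ldots)$-style spaces''---is not a proof. The problem is that $M$ itself need not be grouplike, so there is no a priori reason for $\gamma$ to induce a weak equivalence on this particular under/over mapping space just because the fiber is grouplike. The paper handles this by invoking \cite[Corollary~7.12]{Sagave_spectra-of-units}, which gives a Quillen equivalence between $\cC\cS^{\cJ}$ (positive $\cJ$-model structure) and $(\GammaS)_{\pre}/\bof{\cJ}$; Quillen equivalences preserve derived mapping spaces, and this is what buys you the comparison without any grouplikeness hypothesis on $M$.

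A second, smaller issue: you jump directly to $\Map_{\GammaS}$ without saying in which model structure. The paper is careful here: after landing in $(\GammaS)_{\pre}/\bof{\cJ}$, one first drops the augmentation to $\bof{\cJ}$ (since the over-category mapping spaces agree), then uses that $(\gamma(1+X)^{\cJ})^{\fib}$ is very special---hence fibrant in the \emph{stable} model structure on $\GammaS$---to pass from the pre-stable to the stable mapping space. You should make this transition explicit; otherwise the final object $\Map_{\GammaS}(\gamma(M)/\gamma(P),(\gamma(1+X)^{\cJ})^{\fib})$ is ambiguous. Also note that the paper uses a cofibrant replacement $\gamma(M)^{\cof}$ in the comma category rather than $\gamma(M)$ itself.
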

\begin{proof}
  We denote the mapping spaces in the comma category
  $\gamma(P)\downarrow \GammaS \downarrow \gamma(M)$ by
  $\Map^{\gamma(P)}_{\gamma(M)}(-,-)$, and we let $\xymatrix@1{\gamma(P) \;\ar@{>->}[r]&
  \;\gamma(M)^{\cof}\; \ar@{->>}[r]^{\sim}& \;\gamma(M)}$ be a cofibrant replacement of
  $\gamma(M)$ in this comma category.

  Since Quillen equivalences induce weak equivalences between the
  homotopy types of mapping spaces in the respective categories, the
  Quillen equivalences of~\cite[Corollary
  7.12]{Sagave_spectra-of-units} and
  Lemma~\ref{lem:gamma-M-plus-X-J-as-product} show that the space
  $\Map^P_M(M,(M+X)^{\cJ})$ is weakly equivalent to the mapping space
  $\Map^{\gamma(P)}_{\gamma(M)}\left(\gamma(M)^{\cof},\gamma(M)\times
    (\gamma(1+X)^{\cJ})^{\fib}\right)$ in the model category
  $(\GammaS)_{\pre}/\bof{\cJ}$ defined in~\cite[\S
  7]{Sagave_spectra-of-units}.  The iterated pullbacks that are used
  to define mapping spaces in comma categories show that the mapping
  spaces in $\left((\GammaS)_{\pre}/\bof{\cJ})\right)\downarrow
  \gamma(M)$ coincide with the mapping spaces in the category
  $(\GammaS)_{\pre}\downarrow \gamma(M)$. A similar argument shows
  that the space of maps into
  $\gamma(M)\times(\gamma(1+X)^{\cJ})^{\fib}$ in the category
  $\gamma(P)\downarrow(\GammaS)_{\pre}\downarrow \gamma(M)$ coincides
  with the space of maps into $(\gamma(1+X)^{\cJ})^{\fib} $ in the
  category $\gamma(P)\downarrow(\GammaS)_{\pre}$. Since $(1+X)^{\cJ}$
  is grouplike, the $\Gamma$-space $\gamma(1+X)^{\cJ}$ is very special
  and its level fibrant replacement $(\gamma(1+X)^{\cJ})^{\fib} $ is
  fibrant in the stable model structure on $\Gamma$-spaces. Since the
  latter model structure can be constructed as a left Bousfield
  localization of $(\GammaS)_{\pre}$, spaces of maps into
  $(\gamma(1+X)^{\cJ})^{\fib}$ coincide in these two model
  structures. So we have built a weak equivalence
  \[\Map^{\gamma(P)}_{\gamma(M)}\left(\gamma(M)^{\cof},\gamma(M)\times
    (\gamma(1+X)^{\cJ})^{\fib}\right) \simeq
  \Map^{\gamma(P)}\left(\gamma(M)^{\cof},(\gamma(1+X)^{\cJ})^{\fib}\right)\]
  Applying the functor $\Map_{\GammaS}(-,(\gamma(1+X)^{\cJ})^{\fib})$
  to the pushout
  \[\xymatrix@-1pc{
    \gamma(P) \ar@{>->}[r] \ar[d] & \gamma(M)^{\cof} \ar[d] \\
    \ast \ar[r] & \gamma(M)/\gamma(P) }\] shows that the space
  $\Map^{\gamma(P)}\left(\gamma(M)^{\cof},(\gamma(1+X)^{\cJ})^{\fib}\right)
  $ is weakly equivalent to
  $\Map_{\GammaS}\left(\gamma(M)/\gamma(P),(\gamma(1+X)^{\cJ})^{\fib}\right)$.
\end{proof}
Using Lemma~\ref{lem:Map-CSJ-GammaS} and
Proposition~\ref{prop:identification_1plusX-J}, we can prove the
remaining proposition:
\begin{proof}[Proof of Proposition~\ref{prop:CSJ-derivations}] By
  Lemma~\ref{lem:Map-CSJ-GammaS}, the space of commutative
  \mbox{$\cJ$-space} de\-rivations is weakly equivalent to
  $\Map_{\GammaS}\left(\gamma(M)/\gamma(P),(\gamma(1+X)^{\cJ})^{\fib}\right)$. We
  have shown in Proposition~\ref{prop:identification_1plusX-J} that
  the $\Gamma$-space $\gamma(1+X)^{\cJ}$ models the connective cover
  of the underlying spectrum of $X$. Keeping the notation
  $\gamma(M)/\gamma(P)$ for the (symmetric) spectrum associated with
  $\gamma(M)/\gamma(P)$~\cite{MMSS}, this spectrum being connective implies that the
  mapping space in question is equivalent to
  $\Map_{\Spsym}\left(\gamma(M)/\gamma(P),X\right)$.  The desired
  equivalence to $\Map_{A\text{-}\Mod}(A\sm (\gamma(M)/\gamma(P)), X)$
  follows by extension of scalars along $\mS \to A$.
\end{proof}

\end{document}